\newtheoremstyle{myremark}     {10pt}{10pt}{}{}{\bfseries}{.}{.5em}{}
\newtheorem{thm}{Theorem}[section]
\newtheorem{lem}[thm]{Lemma}
\newtheorem{prop}[thm]{Proposition}
\theoremstyle{definition}
\newtheorem{defn}[thm]{Definition}
\theoremstyle{myremark}
\newtheorem{rem}[thm]{Remark}
\numberwithin{equation}{section}
\def\C{\mathbb C}
\def\E{\mathbb E}
\def\N{\mathbb N}
\def\R{\mathbb R}
\def\p{\mathbb P}
\def\Z{\mathbb Z}
\newcommand{\CB}{\mathcal{B}}
\newcommand{\CE}{\mathcal{E}}
\newcommand{\CF}{\mathcal{F}}
\newcommand{\h}{\mathcal{H}}
\newcommand{\CI}{\mathcal{I}}
\newcommand{\CM}{\mathcal{M}}
\newcommand{\CN}{\mathcal{N}}
\newcommand{\CQ}{\mathcal{Q}}
\newcommand{\eps}{\varepsilon}
\newcommand{\abs}[1]{\left\vert#1\right\vert}
\newcommand{\norm}[1]{\left\Vert#1\right\Vert}
\newcommand{\half }{\frac{1}{2}}
\newcommand{\nrm}{   \norm{ \cdot }}
\begin{document}
	
	\title[Maximal inequalities for square functions]{Maximal inequalities for square functions and quantitative mean ergodic theorems associated to group metric measure spaces }
	
	\author[P. Bikram]{Panchugopal Bikram}
	\address{School of Mathematical Sciences,
National Institute of Science Education and Research,  Bhubaneswar, An OCC of Homi Bhabha National Institute,  Jatni- 752050, India}
	\email{bikram@niser.ac.in}

	\author[D. Saha]{Diptesh Saha}
	\address{Institute of Mathematics of the Polish Academy of Sciences,  ul. Sniadeckich 8, 
		00–656 Warszawa, Poland}
	\email{dptshs@gmail.com}

	

	\keywords{ Non-commutative square functions, Muckenhoupt's weights, Quantitative mean ergodic theorem}
	\subjclass[2020]{42B20, 46L52, 46L51, 47A35, 46L55}
	
	

\begin{abstract}
In this article, we establish weighted strong and weak  type inequalities for non-commutative square functions that naturally arise in the analysis of differences between ball averages and martingale sequences within the framework of group metric measure spaces. Then we use these maximal inequalities to prove a quantitative mean ergodic theorem. Our study extends classical harmonic analysis techniques to the non-commutative setting, revealing intricate interactions between group structures, operator-valued functions, and associated filtration systems.

\end{abstract}

	\maketitle 

\section{Introduction}

Maximal inequalities play a central role in harmonic analysis and ergodic theory, serving as fundamental tools for studying the behavior of averages, establishing pointwise convergence, and deriving strong and weak type estimates. In the classical setting, square functions and maximal functions are closely related. The square functions offer a refined, quantitative view of oscillation, while maximal functions are crucial for almost everywhere convergence results. Their interplay underlies many classical results, including the Hardy–Littlewood maximal inequality, Stein's square function estimates, and Bourgain's work in pointwise ergodic theory.

Over the past two decades, significant progress has been made in extending classical harmonic analysis techniques to the non-commutative setting, where functions take values in von Neumann algebras. In this framework, studying maximal inequalities for square functions in non-commutative $L^p$-spaces plays a central role, particularly in establishing non-commutative quantitative mean ergodic theorems. In this article, we focus on weighted analogues of certain square function inequalities in the non-commutative setting, motivated by problems arising in ergodic theory.

Let \( G \) be a locally compact group equipped with a left-invariant Haar measure \( \mu \) and the Borel $\sigma$-algebra $\CF$. Suppose further that \( G \) is a complete metric space with respect to a metric \( d \), such that the triple \( (G, d, \mu) \) satisfies the annular decay condition \cref{eq: decay property} and the doubling condition \cref{eq: doubl cond}. We refer to such a structure $(G,d, \mu)$ as a group metric measure space. For metric spaces satisfying a geometric doubling condition, Hyt\"onen and Kairema \cite{TuomasHytonen2012} constructed dyadic cube systems in \( G \) that are analogous to the standard dyadic partitions of \( \R^d \) for some \( d \in \N \). This construction provides a natural filtration of \(\sigma\)-subalgebras $(\CF_n)$ of $\CF$ on the measure space \( (G, \mu) \), which in turn allows one to define a net of conditional expectation operators $(\E_n)$ from \( L^1(G, \CF, \mu) \) to $ L^1(G, \CF_n, \mu)$.

Let \( \CM \) be a semifinite von Neumann algebra equipped with a faithful, normal, semifinite (f.n.s.) trace \( \tau \). For a group metric measure space \( (G, d, \mu) \), we consider the von Neumann algebra \( \CN := L^\infty(G, \mu) \otimes \CM \), equipped with the f.n.s.\ trace \( \varphi := \int_G \otimes \tau \). Furthermore, for each $n \in \N$, there exists a conditional expectation $\CE_n:= \E_n \otimes I$ form $\CN$ to $L^\infty(G, \CF_n, \mu) \otimes \CM$, preserving the trace $\varphi$.

Given any locally integrable function \( f : G \to L^1(\CM, \tau) \), we define the averaging operator by
\begin{equation} \label{avg op}
A_{r} f(x) := \frac{1}{\mu(B(x, r))} \int_{B(x, r)} f(y) \, d\mu(y), \quad x \in G,
\end{equation}
where, $r>0$ and the set \( B(x, r) \) denotes the ball of radius \( r \) centered at the point \( x \).

If we replace \( \CM \) by \( \C \), the field of complex numbers, then the associated classical square function is defined by
\[
L f(x) := \left( \sum_k \left| \left( A_{\delta^k} - \mathbb{E}_k \right) f(x) \right|^2 \right)^{1/2}, \quad x \in G.
\]

In \cite{jones2003oscillation}, the authors considered the special case where \( G = \Z^d \) for some \( d \in \N \), \( \delta = 2 \), and \( \CM = \C \). In this setting, they proved that the operator \( L \) is bounded from \( \ell^p \) to \( \ell^p \) for all \( 1 < p < \infty \), and from \( \ell^1 \) to weak-\( \ell^1 \).

To obtain a similar result in the non-commutative setting, one first needs to introduce suitable function spaces, namely the vector-valued non-commutative spaces \( L^p(\CN; \ell_2^{rc}) \) and \( L^{1,\infty}(\CN; \ell_2^{rc}) \) (cf.~\cref{subsec: vector valued Lp}). In this framework, the non-commutative square function is associated with the sequence
\begin{equation} \label{nc1.2}
T_k f(x) := (A_{\delta^k} - \mathbb{E}_k) f(x), \quad x \in G,
\end{equation}
where \( f \in L^1(\CN, \varphi) \). In \cite{hong2021noncommutative}, the authors studied this problem in the setting where \( G = \mathbb{R}^d \), \( \delta = \frac{1}{2} \), and \( \CM \) is a semifinite von Neumann algebra. They established the following result:

\begin{thm}[\cite{hong2021noncommutative}] \label{thm: hong thm}
Let \( 1 \leq p < \infty \). Then there exists a constant \( C_p > 0 \), depending only on \( p \), such that
\begin{equation*}
\begin{cases}
\left\| (T_k f) \right\|_{L^{1, \infty}(\CN; \ell_2^{rc})} \leq C_1 \|f\|_1, & \forall f \in L^1(\CN), \\\\
\left\| (T_k f) \right\|_{L^p(\CN; \ell_2^{rc})} \leq C_p \|f\|_p, & \forall f \in L^p(\CN), \quad \text{for } 1 < p < \infty.
\end{cases}
\end{equation*}
\end{thm}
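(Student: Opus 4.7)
My plan is to reduce the theorem to two endpoints and then interpolate: I would establish the strong $(2,2)$ bound by an orthogonality argument, prove the weak $(1,1)$ bound via a non-commutative Calder\'on--Zygmund decomposition, and cover the remaining $L^p$-range by non-commutative real interpolation together with an $L^\infty$--$\mathrm{BMO}$ endpoint for $p > 2$. For the $L^2$-endpoint, linearity of the trace gives
\[
\Bigl\|\Bigl(\sum_k |T_k f|^2\Bigr)^{1/2}\Bigr\|_{L^2(\CN)}^2 = \sum_k \varphi(|T_k f|^2) = \sum_k \|T_k f\|_{L^2(\CN)}^2,
\]
so the column square-function bound reduces to the scalar-coefficient Littlewood--Paley estimate $\sum_k \|T_k f\|_2^2 \lesssim \|f\|_2^2$. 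The latter follows by orthogonality: both $A_{\delta^k} - I$ and $\CE_k - I$ are approximations of zero at scale $\delta^k$, and their difference $T_k$ inherits the resulting cancellation, which is summable in $k$. The row variant follows by applying the argument to $f^*$, yielding the $\ell_2^{rc}$-bound at $p = 2$.

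\textbf{Weak type $(1,1)$.} For $f \in L^1(\CN)_+$ and $\lambda > 0$, I would apply the non-commutative Calder\'on--Zygmund decomposition associated to the filtration $(\CF_k)$: this produces $f = g + b$ with $\|g\|_2^2 \lesssim \lambda\|f\|_1$, $b = \sum_Q b_Q$, each $b_Q$ localized to a dyadic cube $Q$ and satisfying $\CE_{r(Q)}(b_Q) = 0$ for the appropriate generation $r(Q)$, together with a bad projection $q$ of trace $\lesssim \lambda^{-1}\|f\|_1$ absorbing all the irregularity. The good part is handled by the $L^2$-bound of the previous step together with a non-commutative Chebyshev/distribution argument. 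For the bad part, I would exploit the mean-zero property of the $b_Q$: when the scale $\delta^k$ exceeds $\ell(Q)$, a H\"ormander-type kernel regularity estimate for $A_{\delta^k} - \CE_k$ controls $T_k b_Q$ away from an enlargement of $Q$; when $\delta^k$ is at most $\ell(Q)$, the spatial localization of $A_{\delta^k}$ combined with the vanishing of $\CE_k b_Q$ gives direct control. These two regimes naturally furnish the column/row splitting required by the $\ell_2^{rc}$-norm.

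\textbf{Interpolation and main obstacle.} Given both endpoints, non-commutative real interpolation on the vector-valued spaces $L^p(\CN;\ell_2^{rc})$ delivers the $L^p$-bound for $1 < p < 2$. For $2 < p < \infty$, the natural route is to establish an $L^\infty \to \mathrm{BMO}$-type estimate for the square function (on the non-commutative BMO space associated to the filtration $(\CF_k)$) and interpolate with the $L^2$-bound. I expect the principal obstacle to be the bad-part analysis of Step 2: since $\CE_k$ is defined from a fixed dyadic system and is not translation invariant, while $A_{\delta^k}$ averages over Euclidean balls that do not respect the dyadic grid, verifying the requisite pseudo-localization with constants summable in $k$, and simultaneously maintaining the column and row structures so that the $\ell_2^{rc}$-norm can absorb both off-diagonal contributions on the complement of $q$, is the delicate technical core of the argument.
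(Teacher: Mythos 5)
The paper cites this result from \cite{hong2021noncommutative} and does not reprove it, but its proof of the weighted generalization \Cref{thm: main thm} specializes at $w=1$; I compare against that. Your architecture (strong $(2,2)$, CZ-based weak $(1,1)$, interpolation) is right in outline, but you omit two non-commutative devices without which the argument does not close. First, you describe the bad part as $b = \sum_Q b_Q$ with each $b_Q$ localized, mean-zero and $L^1$-controlled---the commutative picture. In the non-commutative CZ decomposition (\Cref{thm: CZ decomp}) the Cuculescu projections $q_k$ do not commute with $f$, so the bad part splits as $b = b_d + b_{off}$ with $b_d = \sum_k p_k(f-f_k)p_k$ and $b_{off} = \sum_k\bigl(p_k(f-f_k)q_k + q_k(f-f_k)p_k\bigr)$, and these demand genuinely different estimates: $b_d$ inherits the pointwise bound $p_k f_k p_k \lesssim \lambda p_k$ (\Cref{prop: cucu proj prop}) and is handled by $L^2$ almost-orthogonality (\Cref{lem: almost orthogonality}, \Cref{prop: bad diag estimate}), whereas $b_{off}$ has no such $L^\infty$ control and is estimated directly in $L^1$ via a non-commutative H\"older argument (\Cref{lem: bad part est prop}, \Cref{bad part est prop}). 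Your H\"ormander-regularity plan gives no handle on $b_{off}$. Moreover, the classical ``remove a dilate of each bad cube'' step is replaced here by the pseudo-localization projection $\zeta$ (\cref{eq: defn zeta}, \Cref{lem: zeta estimate}), which has controllable trace and annihilates $T_k b_d$ and $T_k b_{off}$ at scales $\delta^k$ smaller than the generating cube; your two-regime sketch has no analogue, and without it those small-scale contributions do not vanish.

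Second, you never linearize the square function. The paper passes to the Rademacher sum $Tf := \sum_k \eps_k T_k f$ and uses the Khintchine inequality (\Cref{thm: Khintchine}) to convert the $\ell_2^{rc}$-norm into an ordinary $L^p\bigl(L^\infty(\Omega)\otimes\CN\bigr)$ norm. For $1 \le p < 2$ the $\ell_2^{rc}$-norm is an infimum over row/column splittings, and both the weak $(1,1)$ statement and the interpolation step become tractable only after this reduction; ``real interpolation on $L^p(\CN;\ell_2^{rc})$'' is exactly what the linearization lets you avoid having to justify. For $2<p<\infty$, your $L^\infty\to\mathrm{BMO}$ route is a legitimate alternative, but the paper simply applies duality after checking $T^*=T$ (\Cref{strongbdd}), which is cheaper once the linearization is in hand.
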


The study of weighted inequalities has long been a central theme in classical harmonic analysis, with a rich history of extending unweighted estimates to settings that incorporate Muckenhoupt-type weights. Naturally, this prompts the question of whether similar weighted results can be developed in the non-commutative framework. The classical theory was pioneered by Muckenhoupt in his foundational work \cite{muckenhoupt1972weighted}, and subsequently advanced by many others, including \cite{fefferfeffer12}. A landmark development in this area was the resolution of the so-called \( A_2 \)-conjecture, which sought to determine whether certain operator bounds exhibit linear dependence on the \( A_2 \)-characteristic of the weight. This was ultimately established by Hyt\"onen in his breakthrough result \cite{Hytonen12ann}.

Parallel developments in the theory of operator algebras and quantum probability have driven rapid progress in non-commutative analysis. Notable contributions in this direction include results on martingales, maximal inequalities, and ergodic theorems within non-commutative \( L^p \)-spaces, as seen in the works \cite{Junge2007, Mei07, Homglaixu23, Hongraywang23, Hongliaowang21, hong2022noncommutative}. Despite these advancements, the incorporation of weights into the non-commutative setting has seen comparatively limited exploration. A recent notable step was taken in \cite{galkazka2022sharp}, where the authors proved weighted non-commutative Doob's maximal inequalities. In a recent work, \cite{saha-rayweighted}, using the Calder\' on-Zygmund decomposition developed by \cite{cadilhac2022spectral} and \cite{cadilhac2022noncommutative}, and weighted Doob's inequality, studied in \cite{galkazka2022sharp}, the authors proved a weighted strong and weak type bounds for non-commutative square functions extending \Cref{thm: hong thm}. 

Returning to the classical setting, the boundedness of square functions associated with group metric measure spaces \( (G, d, \mu) \) has been investigated in \cite{hong2021quantitative}. In this work, we extend those results by establishing the following weighted analogue of \Cref{thm: hong thm}. 

\begin{thm}\label{thm: main thm-intro}
Let \( 1 \leq p < \infty \), and let \( w \) be an \( A_1 \)-weight (cf. \cref{subsec: muckenhoupt's weight}). Then there exists a constant \( C_p(w) > 0 \), depending only on \( p \) and the weight \( w \), such that the following bounds hold:
\begin{equation*}
\begin{cases}
\left\| (T_k f) \right\|_{L^{1, \infty}(\mathcal{N}_w; \ell_2^{rc})} \leq C_1(w) \|f\|_{1, w}, & \text{for all } f \in L^1(\mathcal{N}_w), \\\\
\left\| (T_k f) \right\|_{L^p(\mathcal{N}_w; \ell_2^{rc})} \leq C_p(w) \|f\|_{p, w}, & \text{for all } f \in L^p(\mathcal{N}_w), \quad \text{where } 1 < p < \infty.
\end{cases}
\end{equation*}
\end{thm}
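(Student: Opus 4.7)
The plan is to transplant the strategy of the authors' previous Euclidean work \cite{saha-rayweighted} to the group metric measure space $(G,d,\mu)$. The argument rests on three pillars: the weighted non-commutative Doob inequality of \cite{galkazka2022sharp} for the filtration $(\CF_n)$ arising from the Hyt\"onen--Kairema dyadic cube system; the non-commutative Calder\'on--Zygmund decomposition developed in \cite{cadilhac2022spectral, cadilhac2022noncommutative}; and the unweighted square function bounds coming from \Cref{thm: hong thm} and its group metric measure space counterpart in \cite{hong2021quantitative}. The two weighted estimates of the theorem will be obtained by first proving a weighted weak-$(1,1)$ bound and a weighted strong-$(2,2)$ bound, and then interpolating along the non-commutative $L^p(\CN_w)$-scale.

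For the weighted strong-$(2,2)$ bound I would split $T_k = (A_{\delta^k}-\CE_k)$ and treat the martingale part via the weighted Doob inequality, while controlling the averaging part pointwise by the Hardy--Littlewood maximal operator, which is $L^2_w$-bounded because $A_1\subset A_2$. The core new ingredient is the weighted weak-$(1,1)$ bound. Given $f\in L^1_+(\CN_w)$ and $\lambda>0$, I apply the non-commutative Calder\'on--Zygmund decomposition on the Hyt\"onen--Kairema cubes to write $f=g+b$ with $g\in L^2$ and $b=\sum_j b_j$ supported on a union of bad cubes whose $w$-measure is at most a constant multiple of $\lambda^{-1}\|f\|_{1,w}$. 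The $g$-term is absorbed via the weighted strong-$(2,2)$ estimate combined with Chebyshev. For the $b$-term I separate the indices $k$ according to whether $\delta^k$ is smaller or larger than the scale of $Q_j$: the small-scale terms are killed by the mean-zero property $\CE_k b_j = 0$ combined with the weighted martingale square function estimate, while the large-scale terms demand a kernel-type estimate for $A_{\delta^k}-\CE_k$ across the boundary of $Q_j$, derived from the annular decay and doubling conditions.

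Marcinkiewicz-type interpolation in the non-commutative $L^p(\CN_w)$-scale between the two weighted endpoints then yields the strong-type bound for $1<p\le 2$, and the range $2<p<\infty$ is covered either by a non-commutative Rubio de Francia extrapolation --- whose hypotheses are met because the weak-$(1,1)$ bound will hold uniformly over all $A_1$-weights --- or by interpolating upward against an appropriate majorant square function. The principal obstacle will be the kernel estimate for $A_{\delta^k}-\CE_k$ across the boundary of a Hyt\"onen--Kairema cube required in the weak-$(1,1)$ step; in the Euclidean setting of \cite{saha-rayweighted} one exploits translation invariance and smooth cutoffs, but on a general group metric measure space one must rely purely on the annular decay hypothesis to produce a decay in $k$ that is simultaneously summable and compatible with the column/row structure of $L^p(\CN_w;\ell_2^{rc})$. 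Once that estimate is in place, the remainder of the weighted Calder\'on--Zygmund machinery and the interpolation become relatively formal adaptations of the arguments of \cite{saha-rayweighted}.
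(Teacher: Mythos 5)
Your overall framework --- linearize via Rademacher variables (implicitly through the $\ell_2^{rc}$-spaces), apply a non-commutative Calder\'on--Zygmund decomposition built on the Hyt\"onen--Kairema dyadic cubes, prove a weighted weak-$(1,1)$ bound and a weighted strong-$(2,2)$ bound, and interpolate --- is the right skeleton and matches the paper's strategy. The treatment of the bad part via mean-zero cancellation at small scales and an annular-decay kernel estimate at large scales is also the correct idea and corresponds to the paper's split into $k\le n$ (where $\zeta(A_{\delta^k}-\CE_k)b_n\zeta$ vanishes) and $k>n$ (where the $M_{k,n}$-estimate with weighted geometric decay $\delta^{(n-k)\epsilon}$ from \Cref{lem: norm est prop} takes over). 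But two of your claims as written would not survive contact with the details.

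The serious gap is in your proposed proof of the weighted strong-$(2,2)$ bound. You propose to \emph{split} $T_k = A_{\delta^k} - \CE_k$, control the martingale part by weighted Doob, and control the averaging part by the Hardy--Littlewood maximal operator. This cannot work: what must be bounded is the square function $\bigl(\sum_k |T_kf|^2\bigr)^{1/2}$ in $L^2(\CN_w)$, equivalently $\sum_k \|T_kf\|_{2,w}^2$. If you bound each of $\|A_{\delta^k}f\|_{2,w}$ and $\|\CE_kf\|_{2,w}$ separately by a maximal-operator norm, you get a bound that is \emph{uniform} in $k$ but not \emph{summable} in $k$ --- the series $\sum_k \|A_{\delta^k}f\|_{2,w}^2$ simply diverges for generic $f$. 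The only source of summability is the cancellation between $A_{\delta^k}$ and $\CE_k$ acting on each martingale difference, which is why the paper's \Cref{lem: strong estimate} proves the quasi-orthogonality estimate $\|(A_{\delta^k}-\CE_k)dh_n\|_{2,w}^2 \lesssim C(w)\,\delta^{-c|n-k|}\|dh_n\|_{2,w}^2$ for a positive exponent $c$ depending on the annular-decay and the $A_\infty$-decay exponents $\epsilon$, $\eta$, $\upsilon$, and then feeds this into the almost-orthogonality lemma (\Cref{lem: almost orthogonality}) together with $\sum_n\|dh_n\|_{2,w}^2=\|h\|_{2,w}^2$. Doob's inequality plays no role in the $L^2$ bound; it enters only through \Cref{thm: galkazka's estimate}, which supplies $\lambda\varphi_w(1-q)\lesssim [w]_{A_1}\|f\|_{1,w}$ for the Cuculescu projection and feeds into the $L^1_w$-size of the good part $g$.

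A second, less fatal, imprecision: you describe the bad part as $b=\sum_j b_j$ ``supported on a union of bad cubes,'' which is the commutative picture. In the non-commutative Cuculescu setting the decomposition is $f=g+b_d+b_{\mathrm{off}}$ with a \emph{diagonal} bad part $b_d=\sum_n p_n(f-f_n)p_n$ and an \emph{off-diagonal} bad part $b_{\mathrm{off}}=\sum_n p_n(f-f_n)q_n + q_n(f-f_n)p_n$. These two need genuinely different estimates: $b_d$ admits an $L^2$-level almost-orthogonality argument (after conjugating by the auxiliary projection $\zeta$ of \cref{eq: defn zeta} to kill the near-diagonal terms), whereas $b_{\mathrm{off}}$ is only estimated in $L^1$ via a trace-H\"older argument (\Cref{lem: bad part est prop}), because its square lies beyond $L^1$ control. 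Without the projection $\zeta$ there is no analogue of ``restricting to the complement of the enlarged bad set.'' Finally, for the range $2<p<\infty$ the paper simply uses $T^*=T$ and duality, which is cleaner than Rubio de Francia extrapolation (though the latter is not wrong in principle given the uniform dependence of the weak bound on $[w]_{A_1}$).
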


On the other hand, from the von Neumann's ergodic theorem, it is well-known that for an unitary operator $U$ defined on a Hilbert space the ergodic averages
\begin{align*}
    M_n(U)(\xi):= \frac{1}{n} \sum_{k=0}^{n-1} U^k \xi 
\end{align*}
converges in Hilbert space norm to the fixed point space under the unitary $U$. Furthermore, this result was extended for any contraction $T$ on classical $L^p$ spaces $L^p(X, \mu)$ \cite{riesz1941another}. Later on Jones, Ostrovski, and Rosenblatt \cite{jones1996square} proved that for any contraction $T$ on $L^2(X, \mu)$ any finite sequence of positive numbers $n_1<n_2< \ldots <n_r$,
\begin{align*}
    \sum_{i=1}^r \norm{M_{n_i}(T) f - M_{n_{i-1}}(T) f}_{L^2}^2 \lesssim \norm{f}_{L^2}^2~ \text{for any } f \in L^2(X, \mu).
\end{align*}

This result is considered as the quantitative and finer version of mean ergodic theorem. We refer to \cite{bourgain1989pointwise}, \cite{jones1998oscillation}, \cite{jones2003oscillation} and references there in for more information. Very recently, similar result is extended for the group metric measure space setting \cite{HONG_LIU_2026}.  With the recent developments in non-commutative ergodic theory (cf. \cite{junge2007noncommutative}, \cite{hong2021noncommutative}, \cite{cadilhac2022noncommutative}), it is natural to ask how much of these classical quantitative ergodic theorems can be extended to the non-commutative setup. Recently in \cite{hong2024quantitative}, the authors proved quantitative mean ergodic theorem for power bounded operators on non-commutative $L^p$-spaces for $1<p< \infty$. In this article, we extend the main results of \cite{hong2024quantitative} to prove the following theorem.

\begin{thm}\label{quant erg thm}
    Let $1<p< \infty$ and $d$ is a left invariant metric on $G$. Now assume that $\alpha=(\alpha_g)$ is an action of $(G,d, \mu)$ on non-commutative space $L^p(\CM, \tau)$  by invertible power bounded operators. Suppose 
    \begin{equation*}
    M_r(x):= \frac{1}{\mu(B_r)} \int_{B_r} \alpha_g(x) d \mu(g); ~ x \in L^p(\CM, \tau), ~ r> 0.
\end{equation*}
Then there exists a constant $C_p>0$ such that for all $x \in L^p(\CM, \tau)$
    \begin{equation*}
        \sup  \norm{((M_{ r_i}-M_{ r_{i+1}})(x))_{i \in \N}}_{L^p(\CM; \ell_2^{rc})}  \leq C_p \norm{x}_{L^p(\CM, \tau)},
    \end{equation*}
    where  the supremum is considered over all increasing sequence $(r_i)$ of strictly positive numbers.
\end{thm}

We conclude this section by outlining the structure of the paper. In \Cref{sec: prelim}, we recall the necessary background on metric measure spaces, non-commutative \( L^p \)-spaces, and Muckenhoupt weights. We also present the non-commutative Calderón–Zygmund decomposition, which serves as a key tool in our analysis. One of the main results, \Cref{thm: main thm-intro}, is proved in \Cref{sec: main thm}. In \Cref{sec: Quant erg thm}, we further assume that the group $G$ has an invariant metric, which satisfies a doubling condition, and prove an associated Quantitative ergodic theorem.

\section{Preliminaries}\label{sec: prelim}

Let \((X, d)\) be a metric space. For \(x \in X\) and \(r > 0\), we define the ball centered at \(x\) with radius \(r\) by
\[
B(x, r) := \{ y \in X : d(y, x) \leq r \}.
\]
Assume further that \(X\) is equipped with a Borel measure \(\mu\) such that
\[
0 < \mu(B(x, r)) < \infty \quad \text{for all } x \in X \text{ and } r > 0.
\]
The measure \(\mu\) is said to satisfy the doubling condition if there exists a constant \(C_d > 0\) such that
\begin{equation}\label{eq: doub cond-defn}
\mu(B(s, 2r)) \leq C_d \, \mu(B(s, r)) \quad \text{for all } s \in X \text{ and } r > 0.
\end{equation}

Throughout this note, we will use the following notation: for non-negative quantities \(a\) and \(b\), we write \(a \lesssim b\) to mean that there exists a constant \(C > 0\) such that \(a \leq Cb\). We write \(a \approx b\) if both \(a \lesssim b\) and \(b \lesssim a\).

\subsection{Muckenhoupt's Weights}\label{subsec: muckenhoupt's weight}

We begin with some basic notions related to weights on a metric space. Let \((X, d)\) be a complete metric space equipped with a Borel measure \(\mu\) satisfying the doubling condition~\eqref{eq: doub cond-defn}.

A function \(f: X \to \mathbb{R}\) is said to be locally integrable if it is integrable over every compact subset of \(X\); we denote this by \(f \in L^1_{\text{loc}}(X)\). A Borel measure \(\nu\) on \(X\) is said to be absolutely continuous with respect to \(\mu\) if there exists a non-negative function \(w \in L^1_{\text{loc}}(X)\) such that
\[
d\nu = w \, d\mu.
\]
In this case, \(\nu\) is referred to as a weighted measure with respect to \(\mu\), and the function \(w\) is called a weight. For any measurable subset \(F \subseteq X\) and a weight \(w\), we define
\[
w(F) := \int_F w \, d\mu.
\]

\begin{defn}
 Let $w$ be a weight on $X$. Let $1<p < \infty$. Then, we say that $w$ satisfies Mukenhoupt's $A_p$ condition if 
    \[
    [w]_{A_p}:= \sup_{x \in X, r>0} \left( \frac{1}{\mu(B(x,r))} \int_{B(x,r)} w d\mu \right) \left( \frac{1}{\mu(B(x,r))} \int_{B(x,r)} w^{1/(1-p)} d \mu \right)^{p-1}< \infty,
    \]
    and in this case, we write $w \in A_p(X)$. If $p=1$, we define $A_1(X)$ to be the class of weights $w$ for which there exists  $C>0$ such that 
    \[
    \frac{1}{\mu(B(x,r))} \int_{B(x,r)} w d\mu \leq C \inf_{t \in B(x,r)} w(t)
    \]
    for all $x \in X$ and $r>0$. The smallest such $C>0$ satisfying the above equation is denoted by $[w]_{A_1}$.
\end{defn}
In the following, we record some properties of weights satisfying Muckenhoupt's condition. For proofs we refer to the article \cite{kurki2022extension}.

\begin{thm}\label{thm: Muckenhoupt prop}
Let $(X,d, \mu)$ be a metric measure space satifying the doubling condition.
\begin{enumerate}
    \item  Let $1 \leq p \leq q < \infty$. Then $A_p(X) \subseteq A_q(X)$.
    \item Let $p > 1$, and $v$ a weight on $X$ such that $v^r \in A_p(X)$ for some $r>1$. Then there exists $v_1, v_2 \in A_1(X)$ such that $v= v_1 v_2^{1-p}$.
    \item Let $1 \leq p < \infty$ and $w \in A_p(X)$.
\begin{enumerate}
    \item (\textbf{Reverse Holder's Inequality}) There exists $\delta>0$ and $0<C< \infty$ such that 
    \[
    \left( \frac{1}{\mu(B(x,r))} \int_{B(x,r)} w^{1+\delta} d\mu \right)^{1/(1+ \delta)} \leq C \frac{1}{\mu(B(x,r))} \int_{B(x,r)} w d\mu
    \]
    for all $x \in X$ and $r>0$.
    \item There exist $\epsilon>0$ such that $w^{1+\epsilon} \in A_p(X)$.
    \item Let $B$ be an open ball of $X$, and $E \subseteq B$ with $\mu(E)>0$, then
    \[
    \int_B w d\mu \leq [w]_{A_p} \left(\frac{\mu(B)}{\mu(E)}\right)^p \int_E w d\mu.
    \]
    \item There exists constants $C=C(w)>0$ and $0< \upsilon=\upsilon(w) < 1$ (depending only on the doubling constant and the weight $w$) such that for all open balls $B$ and all measurable subsets $E \subseteq B$ we have
    \[
    \frac{w(E)}{w(B)} \leq C \left( \frac{\mu(E)}{\mu(B)} \right)^{\upsilon}.
    \]
\end{enumerate} 
\end{enumerate}
\end{thm}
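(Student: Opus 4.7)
The plan is to prove the items in the order (1), (3)(c), (3)(a), (3)(b), (3)(d), and finally (2), since the later parts build on the earlier ones. Throughout, the doubling hypothesis is what lets balls play the role of dyadic cubes (via the Hytönen–Kairema construction), but otherwise the proofs follow the Euclidean template.

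I would start with (1), which is an immediate consequence of Hölder's (or Jensen's) inequality applied to the dual factor $w^{1/(1-p)}$, yielding $[w]_{A_q} \leq [w]_{A_p}$ when $1 < p \leq q$; the boundary case $p=1$ is handled by noting that if $w \in A_1$, then $w \geq c(w)\,\tfrac{1}{\mu(B)}\int_B w\,d\mu$ a.e.\ on $B$, hence $w^{1/(1-q)} \leq c(w)^{1/(1-q)}\bigl(\tfrac{1}{\mu(B)}\int_B w\,d\mu\bigr)^{1/(1-q)}$ because $1/(1-q) < 0$. For (3)(c), I would start from the $A_p$ condition on $B$ and combine it with the Hölder inequality
\[
\mu(E) \leq \left(\int_E w\, d\mu\right)^{1/p}\left(\int_E w^{1/(1-p)}\, d\mu\right)^{(p-1)/p}
\]
to conclude $w(B)/w(E) \leq [w]_{A_p}(\mu(B)/\mu(E))^p$; the case $p=1$ is immediate from the $A_1$ definition.

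The heart of the theorem is the reverse Hölder inequality (3)(a), and my approach is the standard Calderón–Zygmund stopping-time argument, localized to a fixed ball $B$ and carried out relative to the Hytönen–Kairema dyadic system on $(G,d,\mu)$. Setting $\alpha := \mu(B)^{-1}\int_B w\, d\mu$, one iteratively decomposes $B$ at thresholds $K^j\alpha$ for a fixed large $K = K(C_d, [w]_{A_p})$; at each level (3)(c) bounds the $\mu$-measure of the newly stopped set in terms of its $w$-measure and furnishes a uniform contraction, producing geometric decay $w(\{w > K^j\alpha\}\cap B) \lesssim \eta^j w(B)$ for some $\eta < 1$. Summing this against $K^{j\delta}$ for sufficiently small $\delta > 0$ yields the reverse Hölder bound on $B$. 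Items (3)(b) and (3)(d) then follow from (3)(a) by routine manipulation: for (3)(b) one applies the reverse Hölder inequality to both $w$ and the dual weight $w^{1/(1-p)}$ (which is $A_{p'}$ with characteristic symmetric to $[w]_{A_p}$), then checks the $A_p$ ratio for $w^{1+\epsilon}$ with $\epsilon$ small. For (3)(d) one writes
\[
w(E) = \int_E w\, d\mu \leq \mu(E)^{\delta/(1+\delta)}\Bigl(\int_B w^{1+\delta}\, d\mu\Bigr)^{1/(1+\delta)}
\]
by Hölder, and then uses reverse Hölder on $B$ to replace the last factor by $C\mu(B)^{1/(1+\delta)}\,w(B)/\mu(B)$, yielding the exponent $\upsilon = \delta/(1+\delta)$.

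For (2), the Jones factorization, my plan is to run the Rubio de Francia iteration scheme. Using the boundedness of the Hardy–Littlewood maximal operator $M$ on suitable weighted $L^q$-spaces (which holds thanks to the extra room coming from $v^r \in A_p$ combined with (1) and the open-ended property (3)(b)), one defines the operator $Rg := \sum_{k \geq 0} M^k g/(2\|M\|)^k$; this produces $A_1$-weights from arbitrary positive inputs because $M(Rg) \leq 2\|M\|\, Rg$. Applying $R$ to two seed functions constructed from $v$ and $v^{-1/(p-1)}$ then gives $v_1, v_2 \in A_1$ with $v = v_1 v_2^{1-p}$. The main obstacle I anticipate is precisely ensuring the convergence of the Rubio de Francia sum in the $L^q$-scale adapted to the seed functions: this requires the self-improvement (3)(b) and the weighted maximal inequality on $(G,d,\mu)$, which is why the items must be handled in the order above.
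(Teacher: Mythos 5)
The paper does not give a proof of this theorem: immediately before the statement it writes ``For proofs we refer to the article \cite{kurki2022extension}'' and then states the result as a black box, so there is no in-paper argument to compare against. That said, your sketch is a correct and essentially standard adaptation of the Euclidean arguments to a doubling metric measure space, and the logical ordering you choose (1) $\to$ (3)(c) $\to$ (3)(a) $\to$ (3)(b),(3)(d) $\to$ (2) is exactly right, since the reverse H\"older inequality is the pivot and (3)(c) is the measure-comparison input needed for the stopping-time contraction. Two remarks worth flagging. First, for the reverse H\"older step you correctly note that one needs a dyadic structure on $(X,d,\mu)$; the Hyt\"onen--Kairema construction supplies it under geometric doubling, which is the right substitute for Euclidean dyadic cubes. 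Second, your proof of (2) quietly imports the weighted Hardy--Littlewood maximal inequality $M\colon L^q(w\,d\mu)\to L^q(w\,d\mu)$ for $w\in A_q$ in the doubling metric setting; this is itself a nontrivial theorem (standard, but not among the listed items) and should be cited explicitly rather than folded into the Rubio de Francia iteration as an implicit input. With that caveat, your argument is sound and tracks the proofs one would find in the reference the paper cites.
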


\subsection{Weighted $L^p$-spaces}
Let \(\CM\) be a semifinite von Neumann algebra equipped with a faithful, normal, semifinite (f.n.s.) trace \(\tau\). A (possibly unbounded) operator \(x\), which is closed and densely defined on a Hilbert space \(\h\), is said to be affiliated with \(\CM\) if it commutes with all unitaries \(u'\) in the commutant \(\CM'\). The operator \(x\) is called \(\tau\)-measurable if for every \(\delta > 0\), there exists a projection \(p \in \mathcal{P}(\CM)\) such that \(p\mathcal{H} \subseteq \mathcal{D}(x)\) (the domain of \(x\)) and \(\tau(1 - p) < \delta\). 

The set of all \(\tau\)-measurable operators affiliated with \(\CM\) is denoted by \(L^0(\CM, \tau)\). It is well known that \(\tau\) extends to the positive cone \(L^0(\CM, \tau)_+\), and for \(x \in L^0(\CM, \tau)\), one defines the \(L^p\)-quasi-norm by
\[
\|x\|_p := \left( \tau(|x|^p) \right)^{1/p}, \quad 0 < p < \infty,
\]
where \(|x| := (x^*x)^{1/2}\). The non-commutative \(L^p\)-space associated with \((\CM, \tau)\) is then given by
\[
L^p(\CM, \tau) := \{ x \in L^0(\CM, \tau) : \|x\|_p < \infty \}, \quad \text{for } 0 < p < \infty,
\]
and we define \(L^\infty(\CM) := \CM\). For \(1 \leq p \leq \infty\), the space \(L^p(\CM, \tau)\) is a Banach space with respect to the norm \(\|\cdot\|_p\).

On the other hand, the non-commutative weak \(L^p\)-space associated with \((\CM, \tau)\) is a subspace of \(L^0(\CM, \tau)\) consisting of those \(x\) for which the following quasi-norm is finite:
\[
\|x\|_{p, \infty} := \sup_{\lambda > 0} \lambda \cdot \tau\left( \chi_{(\lambda, \infty)}(|x|) \right)^{1/p}.
\]
We denote this space by \(L^{p,\infty}(\CM)\), which is a quasi-normed space. In particular, for \(p = 1\), if \(x_1, x_2 \in L^{1,\infty}(\CM)\) and \(\lambda > 0\), the following inequality holds:
\begin{equation}\label{eq: triangle ineq}
\tau\left( \chi_{(\lambda, \infty)}(|x_1 + x_2|) \right) 
\leq \tau\left( \chi_{(\lambda/2, \infty)}(|x_1|) \right) 
+ \tau\left( \chi_{(\lambda/2, \infty)}(|x_2|) \right).
\end{equation}

In this article, we consider weights of the form \( w \otimes I_{\CM} \), where \( w \) is a classical weight as defined above. Observe that \( w \otimes I_{\CM} \) commutes with all elements of the von Neumann algebra \( \CN := L^\infty(X, \mu) \otimes \CM \). We say that \( w \otimes I_{\CM} \) satisfies the Muckenhoupt \( A_p \) condition if and only if \( w \) satisfies the classical \( A_p \) condition. Accordingly, we define
\[
[w \otimes I_{\CM}]_{A_p} := [w]_{A_p}.
\]
From now on, we will refer to such weights simply by \( w \), without ambiguity.

For a weight \( w \) of this form, the weighted non-commutative \( L^p \)-space (for \(1 \leq p < \infty\)) is defined as
\[
L^p_w(\CN) := \left\{ x \in L^0(\CN, \varphi) : x w^{1/p} \in L^p(\CN, \varphi) \right\},
\]
and is equipped with the norm
\[
\norm{x}_{L^p_w(\CN)} := \left( \varphi(\abs{x}^p w) \right)^{1/p}.
\]

Equivalently, defining the weighted f.n.s.\ trace \( \varphi_w(\cdot) := \varphi(\cdot\, w) \) on the von Neumann algebra \( \CN \), it is evident that \( L^p_w(\CN) \) coincides with the non-commutative \( L^p \)-space associated to the pair \( (\CN, \varphi_w) \). In the sequel, we may occasionally use the notation \( \CN_w \) to emphasize that \( \CN \) is considered with the trace \( \varphi_w \). The associated Banach space norm will be denoted by \( \nrm{}_{p,w} \).

\subsection{Vector-valued non-commutative $L^p$-spaces}\label{subsec: vector valued Lp}
Let \((X, \mu)\) be a measure space and \((\CM, \tau)\) a semifinite von Neumann algebra. We recall the definition of the column space from \cite{Mei07, pisier2003non}:
\[
L^p(\CM; L^2_c(X)) := \left\{ f : X \to L^p(\CM) ~\middle|~ f \text{ is measurable and } \|f\|_{L^p(\CM; L^2_c(X))} < \infty \right\},
\]
where
\[
\|f\|_{L^p(\CM; L^2_c(X))} := \left\| \left( \int_X f^*(x) f(x) \, d\mu(x) \right)^{1/2} \right\|_p,
\]
which defines a norm on \(L^p(\CM; L^2_c(X))\) for \(1 \leq p < \infty\). Similarly, the associated row space is defined with the norm
\[
\|f\|_{L^p(\CM; L^2_r(X))} := \left\| \left( \int_X f(x) f^*(x) \, d\mu(x) \right)^{1/2} \right\|_p.
\]

We now recall the following Hölder-type inequality.

\begin{prop}[\cite{Mei07}, Proposition~1.1]\label{prop:Holder-type-ineq}
Let \(1 \leq p, q \leq \infty\) and \(\frac{1}{r} = \frac{1}{p} + \frac{1}{q}\). If \(f \in L^p(\CM; L^2_c(X))\) and \(g \in L^q(\CM; L^2_c(X))\), then
\[
\left\| \int_X f^*(x) g(x) \, d\mu(x) \right\|_r \leq \|f\|_{L^p(\CM; L^2_c(X))} \cdot \|g\|_{L^q(\CM; L^2_c(X))}.
\]
\end{prop}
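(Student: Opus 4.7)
The plan is to realize $L^p(\CM; L^2_c(X))$ as a ``corner'' of a larger non-commutative $L^p$-space, so that the integral pairing $\int_X f^*(x) g(x)\, d\mu(x)$ becomes a product of two operators in that larger algebra; the conclusion then reduces to the ordinary non-commutative H\"older inequality.

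First, I would form the amplified von Neumann algebra $\tilde\CM := \CB(L^2(X,\mu)) \,\bar{\otimes}\, \CM$ equipped with the faithful normal semifinite tensor trace $\tilde\tau := \tr \otimes \tau$. Fix a unit vector $\xi_0 \in L^2(X,\mu)$, let $e := \xi_0 \xi_0^*$ denote the rank-one projection onto $\C \xi_0$, and for $f$ in a suitably dense subclass of $L^p(\CM; L^2_c(X))$ define an operator $\Phi(f)$ on $L^2(X,\mu) \otimes \CH$ by
\[
\Phi(f)(\xi \otimes \zeta) := \langle \xi_0, \xi \rangle \cdot \bigl( x \mapsto f(x)\zeta \bigr),\qquad \xi \in L^2(X,\mu),\ \zeta \in \CH.
\]
A direct commutant check shows $\Phi(f)$ commutes with $1 \otimes \CM'$, hence $\Phi(f) \in \tilde\CM$. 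The key algebraic identity, obtained by unwinding the adjoint, is
\[
\Phi(f)^* \Phi(g) \;=\; e \otimes \int_X f(x)^* g(x)\, d\mu(x).
\]
Specializing to $f = g$ gives $|\Phi(f)| = e \otimes \bigl(\int_X f^*f\,d\mu\bigr)^{1/2}$, so, using $\tr(e) = 1$, I would obtain the two norm identities
\[
\|\Phi(f)\|_{L^p(\tilde\CM)} = \|f\|_{L^p(\CM; L^2_c(X))},\qquad \|\Phi(f)^*\Phi(g)\|_{L^r(\tilde\CM)} = \Bigl\| \int_X f(x)^* g(x)\, d\mu(x) \Bigr\|_r.
\]

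With these identifications in hand, applying the standard non-commutative H\"older inequality in $(\tilde\CM, \tilde\tau)$ yields
\[
\|\Phi(f)^* \Phi(g)\|_{L^r(\tilde\CM)} \;\leq\; \|\Phi(f)\|_{L^p(\tilde\CM)}\,\|\Phi(g)\|_{L^q(\tilde\CM)},
\]
which, after substituting the two identities above, is precisely the claim.

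The main obstacle is the initial analytic setup: for general $f \in L^p(\CM; L^2_c(X))$, the operator $\Phi(f)$ need not be bounded, so one must interpret $\Phi(f)$ as a $\tilde\tau$-measurable (possibly unbounded) operator affiliated with $\tilde\CM$ with polar decomposition $\Phi(f) = U|\Phi(f)|$ and $|\Phi(f)| = e \otimes \bigl(\int_X f^*f\,d\mu\bigr)^{1/2} \in L^p(\tilde\CM)$. Once this is justified by a standard density/approximation argument (for instance by truncating and restricting to bounded, compactly supported $f$), everything else is a routine computation.
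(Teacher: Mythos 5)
The paper cites this as Mei's Proposition~1.1 and gives no proof of its own, so there is no in-paper argument to compare against. Your argument --- realizing $L^p(\CM; L^2_c(X))$ as a column corner of $L^p(\CB(L^2(X))\,\bar{\otimes}\,\CM)$ via the rank-one embedding $\Phi$, verifying $\Phi(f)^*\Phi(g) = e\otimes\int_X f^*g\,d\mu$ and the resulting norm identities, and then applying ordinary non-commutative H\"older in the ambient algebra --- is correct and is essentially the standard proof behind Mei's Proposition~1.1; you also correctly flag that for general $f$ the operator $\Phi(f)$ is only $\tilde\tau$-measurable and one must argue by density.
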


\noindent
Let \(1 \leq p \leq \infty\), let \(w\) be a weight, and consider the von Neumann algebra \(\CN_w\). Observe that when \(X = \mathbb{N}\), equipped with the counting measure, the norms on the column and row spaces associated with \(\CN_w\) take the following discrete forms:
\[
\norm{ (f_k) }_{ L^{ p } ( \CN_w; \ell_2^c) } = \norm{\left( \sum_{ k} \abs{ f_k}^2 \right)^\half }_{ p,w } \text{ and }  \norm{ (f_k) }_{ L^{ p } ( \CN_w; \ell_2^r) } = \norm{\left( \sum_{ k} \abs{ f_k^*}^2 \right)^\half }_{ p,w }. 
\]
The associated row-column space $L^p(\CM; \ell_2^{rc})$ is defined as follows.
\begin{enumerate}
\item If \( 2 \leq p \leq \infty \),
  \[
    L^p(\CN_w; \ell_2^{rc}) := L^p(\CN_w; \ell_2^c) \cap L^p(\CN_w; \ell_2^r)
  \]
  equipped with the norm:
  \[
    \| (f_k) \|_{L^p(\CN_w; \ell_2^{rc})} = \max \left\{ \| (f_k) \|_{L^p(\CN_w; \ell_2^c)}, \| (f_k) \|_{L^p(\CN_w; \ell_2^r)} \right\}.
  \]

  \item If \( 1 \leq p < 2 \),
  \[
    L^p(\CN_w; \ell_2^{rc}) = L^p(\CN_w; \ell_2^c) + L^p(\CN_w; \ell_2^r)
  \]
  equipped with the norm:
  \[
    \| (f_k) \|_{L^p(\CN_w; \ell_2^{rc})} = \inf \left\{ \| (g_k) \|_{L^p(\CN_w; \ell_2^c)} + \| (h_k) \|_{L^p(\CN_w; \ell_2^r)} \right\},
  \]
  where the infimum runs over all decompositions \( f_k = g_k + h_k \) with \( g_k \) and \( h_k \) in \( L^p(\CN_w) \).
\end{enumerate}
In a similar manner, one can define the weak-type spaces $L^{1, \infty}(\CN_w; \ell_2^c)$ and $L^{1, \infty}(\CN_w; \ell_2^r)$. Correspondingly, the associated row-column space $L^{1,\infty}(\CN_w; \ell_2^{rc})$ is also defined, equipped with the quasi-norm
\[
\norm{ (f_k) }_{ L^{ 1, \infty } ( \CN_w; \ell_2^{rc}) } = \inf_{ f_k = g_k + h_k}
\{  \norm{ (g_k) }_{ L^{ 1, \infty } ( \CN_w; \ell_2^c) } +  \norm{ (h_k) }_{ L^{ 1, \infty } ( \CN_w; \ell_2^r) } \}.
\]

Let us now recall the following non-commutative Khintchine inequality, which will play a crucial role in our analysis. For the proof, we refer to \cite{lust1991non} and \cite[Corollary 3.2]{cadilhac2019noncommutative}.

\begin{thm}\label{thm: Khintchine}
    Let $w \in A_1$ and $(\eps_k)$ be a Rademacher sequence on a probability space $(\Omega, \p)$. Then,
    \begin{enumerate}
        \item $\norm{ \sum_{ k } \varepsilon_k f_k  }_{ L^{ p } ( L^\infty( \Omega) \otimes \CN_w) } \approx  \norm{ (f_k) }_{ L^{ p} ( \CN_w; \ell_2^{cr}) }.$
        \item $\norm{ \sum_{ k } \varepsilon_k f_k  }_{ L^{ 1, \infty } ( L^\infty( \Omega) \otimes \CN_w) } \approx  \norm{ (f_k) }_{ L^{ 1, \infty } ( \CN_w; \ell_2^{cr}) }$.
    \end{enumerate}

\end{thm}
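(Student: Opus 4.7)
The plan is to reduce the statement to the classical non-commutative Khintchine inequality in the semifinite setting, exploiting the fact that weights enter only through a change of trace. First I would observe that the linear functional $\varphi_w(\cdot) := \varphi(\cdot\, w)$ is itself a faithful, normal, semifinite trace on the same von Neumann algebra $\CN = L^\infty(X,\mu) \otimes \CM$: faithfulness and normality follow because $w > 0$ almost everywhere and $w \in L^1_{\mathrm{loc}}$ (consequences of $w \in A_1$), while semifiniteness is obtained by intersecting the supports of finite-trace projections for $\varphi$ with level sets $\{c \leq w \leq c^{-1}\}$ to produce an exhausting net of $\varphi_w$-finite projections. Consequently $(\CN, \varphi_w)$ is a bona fide semifinite von Neumann algebra, and the weighted spaces $L^p(\CN_w)$, $L^{1,\infty}(\CN_w)$, together with their vector-valued column, row, and row-column analogues, are literally the non-commutative $L^p$ and weak-$L^1$ spaces associated with $\varphi_w$.

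Once this identification is made, statement~(1) is an immediate application of the Lust-Piquard non-commutative Khintchine inequality \cite{lust1991non} applied to $(\CN, \varphi_w)$, with the Rademacher variables $(\eps_k)$ realized in the commutative tensor factor $L^\infty(\Omega, \p)$, which commutes with $\CN_w$ and contributes the classical $L^p(\Omega, \p)$ norm of $\sum_k \eps_k f_k$. Since the proof of the Lust-Piquard inequality uses only semifiniteness of the trace and not any specific structural property of a reference weight, no modification is required.

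Statement~(2) is the endpoint case and is the step I expect to be the main obstacle, since $L^{1,\infty}$ is only a quasi-Banach space and the proof must handle both the $\ell^{rc}_2$-decomposition and the lack of triangle inequality carefully via \cref{eq: triangle ineq}. Here I would invoke the argument of \cite[Corollary~3.2]{cadilhac2019noncommutative}, which establishes the weak-type Khintchine inequality in the generality of an arbitrary semifinite von Neumann algebra. The upper bound, namely
\[
\p \otimes \varphi_w\!\left(\chi_{(\lambda,\infty)}\!\left(\Bigl|\sum_k \eps_k f_k\Bigr|\right)\right) \lesssim \lambda^{-1} \|(f_k)\|_{L^{1,\infty}(\CN_w;\ell_2^{rc})},
\]
is obtained by splitting an arbitrary decomposition $f_k = g_k + h_k$ with $(g_k)$ controlled in the column quasi-norm and $(h_k)$ in the row quasi-norm, and then applying the column/row weak-type Khintchine inequality separately to each piece. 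The reverse inequality is proved by extracting a suitable decomposition from the level-set structure of the left-hand side, again following \cite{cadilhac2019noncommutative}. Since that argument is formulated intrinsically in terms of a fixed f.n.s.\ trace, it applies verbatim to $(\CN, \varphi_w)$, completing the proof.
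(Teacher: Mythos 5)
Your proposal is correct and takes essentially the same approach as the paper, which proves nothing here and simply cites \cite{lust1991non} and \cite[Corollary 3.2]{cadilhac2019noncommutative}; the reduction you make explicit — that $w\otimes I_{\CM}$ is central in $\CN$, hence $\varphi_w$ is itself an f.n.s.\ trace and the weighted (vector-valued) spaces are literally the unweighted ones for $(\CN,\varphi_w)$ — is exactly the tacit step that makes those citations apply.
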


\subsection{Non-commutative martingales}
Let $\CM$ be a semifinite von Neumann algebra with an f.n.s trace $\tau$ and let $(\CM_n)_{n \geq 0}$ be a decreasing sequence of von Neumann subalgebras of $\CM$ such that $\CM_0= \bigcup_{n \geq 0} \CM_n$ is $w^*$-dense in $\CM$. Then, for each $n$, there exists a $\tau$-preserving normal conditional expectation $\CE_n: \CM \to \CM_n$. 

A sequence $(x_n)_{n \geq 0}$ in $L^1(\CM, \tau)$ is called a non-commutative  martingale if $\CE_n(x_{n+1})= x_{n+1}$ for all $n\geq 0$. The associated martingale difference sequence $(dx_n)_{n \geq 0}$ is defined by $dx_n:= x_n-x_{n+1}$ for all $n \geq 0$.

We now turn our attention to the specific class of martingales that are the focus of this article. From this point onward, unless otherwise stated, we will replace the general metric space \(X\) with a locally compact group \(G\), and \(\mu\) will denote the left-invariant Haar measure on \(G\). The identity element of the group will be denoted by \(e\). In addition, we assume that \(G\) is equipped with a complete metric \(d\), making it also a metric space. For each \(r > 0\), we denote by \(B_r\) the ball centered at the identity \(e\), i.e.,
\[
B_r := \{ g \in G : d(g, e) \leq r \}.
\]

Let $r_0>0$ and $0< \epsilon \leq 1$ be fixed. We say that the triple $(G,d, \mu)$ satisfies the $(\epsilon,r_0)$-annular decay property if there exists a constant $K>0$ such that for all $x\in G$, $r\in (r_0,\infty)$, and $s\in (0,r]$, the following inequality holds:
		\begin{equation}\label{eq: decay property}
			\mu(B(x,r+s))-\mu(B(x,r))\le K\bigg(\frac{s}{r}\bigg)^{\epsilon}\mu(B(x,r)).
	\end{equation}
It follows from \cref{eq: decay property} that for all $x \in G$ and $r_0< r \leq R< \infty$, we have the estimate:
    \begin{equation}\label{eq: doubl cond}
        \frac{\mu(B(x,R))}{\mu(B(x,r))}\leq (K+1)\Big(\frac{R}{r}\Big)^{\epsilon}.
    \end{equation}
That is, the balls of radius greater than $r_0$ satisfies doubling condition with respect to the measure $\mu$. We now recall the following useful lemma from \cite[Lemma 2.9]{hong2021quantitative}, which will be useful in the sequel.
 
\begin{lem}\label{lem: annular decay}
    Let $r \geq 2r_0$. Then there exists a constant $K_\epsilon>0$ such that for every $0< s \leq r$ and for all $x \in G$, the following inequality holds:
    \[
    \mu(B(x, r+s))- \mu(B(x, r-s)) \leq K_\epsilon \left( \frac{s}{r} \right)^\epsilon \mu(B(x, r)).
    \]
\end{lem}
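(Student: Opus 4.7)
The plan is to decompose the annular difference at the intermediate radius $r$, writing
\[
\mu(B(x,r+s)) - \mu(B(x,r-s)) = \bigl[\mu(B(x,r+s)) - \mu(B(x,r))\bigr] + \bigl[\mu(B(x,r)) - \mu(B(x,r-s))\bigr],
\]
and to bound each bracket separately by a constant multiple of $(s/r)^\epsilon \mu(B(x,r))$ using the $(\epsilon,r_0)$-annular decay property \cref{eq: decay property}. The first bracket is immediate: since $r \geq 2r_0 > r_0$ and $0 < s \leq r$, the hypothesis applies directly at the base radius $r$ and produces the bound $K(s/r)^\epsilon \mu(B(x,r))$.

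For the second bracket, I would split on the size of $s$ relative to $r$. In the regime $s \leq r/2$, the shifted base radius satisfies $r - s \geq r/2 \geq r_0$ and the increment $s$ satisfies $s \leq r - s$, so \cref{eq: decay property} applies at base radius $r-s$ with increment $s$ and yields
\[
\mu(B(x,r)) - \mu(B(x,r-s)) \leq K\left(\frac{s}{r-s}\right)^\epsilon \mu(B(x,r-s)).
\]
Using $r - s \geq r/2$ to absorb $s/(r-s) \leq 2s/r$, together with the monotonicity $\mu(B(x,r-s)) \leq \mu(B(x,r))$, gives a bound of the form $2^\epsilon K (s/r)^\epsilon \mu(B(x,r))$. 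In the complementary regime $s > r/2$, the factor $(s/r)^\epsilon$ is bounded below by $2^{-\epsilon}$, so the trivial monotone estimate $\mu(B(x,r)) - \mu(B(x,r-s)) \leq \mu(B(x,r))$ already yields a bound of the desired form, namely $2^\epsilon (s/r)^\epsilon \mu(B(x,r))$.

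Summing the two contributions gives the claimed inequality with an explicit constant of the form $K_\epsilon := (1 + 2^\epsilon) K + 2^\epsilon$, or any convenient larger value. There is no real obstacle in the argument; the only delicate point is the regime where $s$ is close to $r$, for then $r - s$ can drop below $r_0$ and \cref{eq: decay property} is no longer directly applicable at the shifted base radius. This is precisely the role of the case split: whenever the annular hypothesis fails to apply, the ratio $s/r$ is comparable to $1$, so the trivial monotone bound is strong enough to absorb the loss.
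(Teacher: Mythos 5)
The paper does not actually prove this lemma; it is quoted verbatim from \cite[Lemma 2.9]{hong2021quantitative} with only a citation and no argument. Your proposal therefore supplies a self-contained proof where the paper has none, and the approach is the natural one: split the annular difference at the intermediate radius $r$, control the outer piece $\mu(B(x,r+s))-\mu(B(x,r))$ directly from \cref{eq: decay property}, and handle the inner piece $\mu(B(x,r))-\mu(B(x,r-s))$ by a case split on the size of $s$. The resulting constant $K_\epsilon = (1+2^\epsilon)K + 2^\epsilon$ is of the expected form, and the logic in each regime is sound.

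The only blemish is at the boundary of the case split. The annular decay hypothesis \cref{eq: decay property} is stated for base radii in the open interval $(r_0,\infty)$, while in the regime $s \leq r/2$ you apply it at base radius $r-s$ using only $r-s \geq r/2 \geq r_0$, which permits equality $r-s = r_0$ (when $r = 2r_0$ and $s = r/2$). This is easy to repair: split instead at $s < r/2$ versus $s \geq r/2$ (the trivial monotone bound still gives $\mu(B(x,r)) \leq 2^\epsilon (s/r)^\epsilon \mu(B(x,r))$ at $s = r/2$), or choose a smaller threshold such as $s \leq r/4$ so that $r-s \geq 3r/4 > r_0$ strictly. With that minor adjustment the argument is complete and correct.
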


Let $D_0>0$ and $r_1>r_0$ be fixed. We say that the metric space $(G,d)$ satisfies the $(D_0,r_1)$-geometrically doubling property if for every $r\in(0,r_1]$ and every ball $B(h,r)$, there are at most $D_0$ balls $B(h_i,r/2)$ such that
	\begin{equation}\label{eq: geo-doubling}
		B(h,r)\subseteq \bigcup_{1\le i\le{D_0}}B(h_i,r/2).
	\end{equation}

In the sequel, we will refer to any triple $(G,d, \mu)$ satisfying the conditions \ref{eq: decay property} and \ref{eq: geo-doubling} as a group metric measure space.

    \begin{prop}\cite{TuomasHytonen2012}\label{prop: hytonen lem}
        Let $(G,d,\mu)$ be a group metric measure space. Fix constants $0<c_0<C_0<\infty$ and $\delta>1$ such that
        \[
        18C_0\delta^{-1}\leq c_0.
        \]
        Let $I_k$ \emph{($k\in \N$)} be an index set and $\{z_\alpha^k\in G:\alpha\in I_k,k\in \N\}$ be a collection of points with the properties that
\begin{equation}\label{distance}
  d(z_\alpha^k,z_\beta^k)\ge c_0\delta^k~(\alpha\neq\beta),~\min_{\alpha}d(x,z_\alpha^k)<C_0\delta^k,~\forall~x\in G,~k\in\mathbb{Z}.
\end{equation}
Then there exist a family of sets $\big\{Q_\alpha^k\big\}_{\alpha\in I_k}$ associating with $\{z_\alpha^k\}_{\alpha\in I_k}$, and constants $a_0:=c_0/3$ and $C_1:=2C_0$ such that
\begin{enumerate}
    \item $\forall~k\in\Z$,~$\cup_{\alpha\in I_k} Q_\alpha^k=G$.
    \item If $k\leq l$ then either $Q_\alpha^k\subset Q_\beta^l$ or $Q_\alpha^k\cap Q_\beta^l=\emptyset$.
    \item For each $(k,\alpha)$ and each $k<n$ there is a unique $\beta$ such that $Q_\alpha^k\subset Q_\beta^n$, and for $n=k+1$, we call such $Q_\beta^{k+1}$ the parent of $Q_\alpha^{k}$.
    \item $B(z_\alpha^k, a_0\delta^k)\subseteq Q_\alpha^k\subseteq B(z_\alpha^k, C_1\delta^k)$.
\end{enumerate}
    \end{prop}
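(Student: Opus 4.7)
The plan is to adapt the Christ--Hytönen--Kairema construction of dyadic systems, in which the sets $Q_\alpha^k$ are built as unions of Voronoi-type cells of descendants under a prescribed parent map on the reference points $\{z_\alpha^k\}$.

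First I would use the covering condition to define a parent map. For each $k$ and each $\alpha \in I_k$, the hypothesis $\min_\beta d(z_\alpha^k, z_\beta^{k+1}) < C_0\delta^{k+1}$ supplies an admissible $\beta \in I_{k+1}$; fix one such choice (using a well-ordering of $I_{k+1}$) and call it $\pi(\alpha)$. Iterating, obtain maps $\pi^m : I_{k-m} \to I_k$ for every $m \geq 0$. At each level $k$ introduce the open Voronoi cells
\[
U_\alpha^k := \bigl\{x \in G : d(x,z_\alpha^k) < d(x,z_\beta^k) \text{ for all } \beta \neq \alpha\bigr\},
\]
and break ties on the (measure-zero) boundary via a fixed well-order of $I_k$ to obtain a disjoint partition $\{\widetilde U_\alpha^k\}_{\alpha \in I_k}$ of $G$. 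The separation and covering hypotheses immediately give
\[
B\bigl(z_\alpha^k,\, c_0\delta^k/2\bigr) \subseteq U_\alpha^k \subseteq \widetilde U_\alpha^k \subseteq B\bigl(z_\alpha^k,\, C_0\delta^k\bigr).
\]

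Next I would glue the scales using the parent map. Define
\[
Q_\alpha^k := \bigcup_{m \geq 0}\; \bigcup_{\gamma \in (\pi^m)^{-1}(\alpha)} \widetilde U_\gamma^{k-m}.
\]
Property (1) is immediate: for each fixed $m$, the cells $\widetilde U_\gamma^{k-m}$ partition $G$ and every $\gamma \in I_{k-m}$ has a unique $\pi^m$-image in $I_k$. Properties (2) and (3) then follow because the collection is assembled from a tree on the pairs $(k,\alpha)$ via $\pi$; the unique ancestor in property (3) is simply $\pi^{n-k}(\alpha)$. The inner half of property (4) is essentially free: any $x$ with $d(x,z_\alpha^k) < c_0\delta^k/2$ strictly beats every other center by the triangle inequality against the separation $c_0\delta^k$, hence $B(z_\alpha^k, a_0\delta^k) \subseteq U_\alpha^k \subseteq Q_\alpha^k$ since $a_0 = c_0/3$.

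The delicate part is the outer inclusion $Q_\alpha^k \subseteq B(z_\alpha^k, C_1\delta^k)$. For $x \in \widetilde U_\gamma^{k-m}$ with $\pi^m(\gamma) = \alpha$, telescoping along the parent chain yields
\[
d(x,z_\alpha^k) \leq d(x,z_\gamma^{k-m}) + \sum_{j=1}^{m} d\bigl(z_{\pi^{j-1}(\gamma)}^{k-m+j-1},\, z_{\pi^j(\gamma)}^{k-m+j}\bigr) \leq C_0 \sum_{j=0}^{m} \delta^{k-m+j} \leq \frac{C_0\, \delta^{k+1}}{\delta - 1}.
\]
The hypothesis $18 C_0 \delta^{-1} \leq c_0 < C_0$ forces $\delta > 18$, whence $\delta/(\delta - 1) < 2$ and the bound above is at most $2 C_0 \delta^k = C_1 \delta^k$. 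This geometric-series comparison is the crux: the main obstacle is the quantitative balancing between the drift accumulated along descending parent chains and the separation constant $c_0$, and the inequality $18 C_0 \delta^{-1} \leq c_0$ is precisely what tunes this balance so that the inner containment, the outer containment, and the tree-nesting property hold simultaneously.
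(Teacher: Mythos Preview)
The paper does not supply its own proof of this proposition; it is quoted directly from Hyt\"onen--Kairema as a black box. So there is no in-paper argument to compare against, and I comment only on the correctness of your sketch.

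There is a genuine gap in your verification of property~(2). You define
\[
Q_\alpha^k \;=\; \bigcup_{m\ge 0}\ \bigcup_{\gamma\in(\pi^m)^{-1}(\alpha)} \widetilde U_\gamma^{\,k-m}
\]
and then claim that nesting and disjointness ``follow because the collection is assembled from a tree on the pairs $(k,\alpha)$''. But the tree structure lives on the \emph{centers}, not on the Voronoi cells. For each fixed $m$ the family $\bigl\{\bigcup_{\gamma\in(\pi^m)^{-1}(\alpha)}\widetilde U_\gamma^{\,k-m}\bigr\}_{\alpha\in I_k}$ is indeed a partition of $G$, yet the partitions coming from different values of $m$ do not refine one another: a boundary cell $\widetilde U_\gamma^{\,k-1}$ with parent $\pi(\gamma)=\alpha$ will typically protrude into the level-$k$ Voronoi region $\widetilde U_\beta^{\,k}$ of some neighbouring center $\beta\neq\alpha$, so that $Q_\alpha^k\cap Q_\beta^k\supseteq \widetilde U_\gamma^{\,k-1}\cap\widetilde U_\beta^{\,k}\neq\emptyset$ while neither cube is contained in the other. (Already on $\R$ with equispaced centers and non-integer $\delta$ this occurs at $m=1$.) Thus your cubes at a fixed level need not satisfy property~(2), and the uniqueness assertion in property~(3) collapses with it.

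The actual Hyt\"onen--Kairema construction avoids this by not taking a raw union over scales. One either passes to a limit in $m$---declaring $x\in Q_\alpha^k$ when the ancestry of the fine-scale Voronoi cell containing $x$ eventually stabilises at $\alpha$, and disposing of the exceptional set separately---or, equivalently, builds the cubes recursively so that the level-$(k-1)$ cubes refine the level-$k$ cubes \emph{as sets}, not merely via the parent map on centers. Your geometric-series estimate for the outer inclusion and your inner-ball containment are both correct and are precisely the quantitative inputs that construction needs; what is missing is the set-theoretic assembly that converts the tree on centers into a genuine nested partition.
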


Let $k \in \N$ and  $\CB_k$ be the $\sigma$-algebra generated by the 'dyadic cubes' $ \CQ_k=:\{Q^k_\alpha: \alpha \in I_k\}$.  Now, for every $k \in \N$, we define the von Neumann subalgebra
	\begin{align*}
		\CN_k:= L^\infty(G, \CB_k, \mu) \otimes \CM
	\end{align*}
    of $\CN:= L^\infty(G, \CB_k, \mu) \otimes \CM$. Observe that for $k\leq n$, $\CN_n$ is a subalgebra of $\CN_k$. Furthermore, the von Neumann algebra $\CN$ is equipped with the f.n.s trace $\varphi= \int_G \otimes \tau$.
    
    Now since $\varphi|_{\CN_k}$ is again semi-finite, there exists a normal conditional expectation $\CE_k$ which is of the form $\CE_k:= \E_k(\cdot | \CB_k) \otimes I_{\CM}$, where $\E_k(\cdot | \CB_k)$ is the conditional expectation from $L^\infty(G, \CB, \mu)$ onto $L^\infty(G, \CB_k, \mu)$. Therefore, for $k \in \Z$ and $f \in L^1(\CN, \varphi)$ we have
	\begin{align*}
		f_k:= \CE_k(f)=\sum_{\alpha \in I_k} f_{Q^k_\alpha} \chi_{Q^k_\alpha}, 
	\end{align*}
	where $f_{Q^k_\alpha}:= \frac{1}{\mu(Q^k_\alpha)} \int_{Q^k_\alpha} f d \mu$. Moreover, observe that $(\CN_k)_{k \in \Z}$ is a decreasing family of subalgebras of $\CN$. Hence, for every $f \in L^1(\CN, \varphi)$, the sequence $(\CE_k(f))_{k \in \Z}$ forms a non-commutative martingale, that is 
	\begin{align*}
		\CE_j(\CE_k(f))= \CE_{\max\{j,k\}}(f).
	\end{align*}

Let \( (G, d, \mu) \) be a group metric measure space. Let \( f: G \to L^1(\mathcal{N}, \varphi_w) \) be a function for which the averaging operator is defined as
\[
A_r(f)(h) := \frac{1}{\mu(B(h, r))} \int_{B(h, r)} f(g) \, d\mu(g), \quad r > 0,~ h \in G.
\]

Let \(\delta > 1\) be the constant determined in \Cref{prop: hytonen lem}, and let \(n_{r_0}\) be the unique integer such that
\[
\delta^{n_{r_0}} < r_0 \leq \delta^{n_{r_0} + 1}.
\]
Now, for every \(n > n_{r_0}\), we define the operator
\[
T_nf(x):= (A_{\delta^n}- \CE_n)f (x),~ f \in L^1_w(\CN, \varphi).
\]

\begin{rem}

We note that given a cube $ Q_\alpha^k $ with $ k > n_{ r_0} $, we have the following 
$$ B(z_\alpha^k, a_0\delta^k)\subseteq Q_\alpha^k\subseteq B(z_\alpha^k, C_1\delta^k)
$$
For minutes, for simplicity of notation, suppose $ Q= Q_\alpha^k$, $ B_0 = B(z_\alpha^k, a_0\delta^k$ and $ B_1 = B(z_\alpha^k, C_1\delta^k)$.
It follows that 
$$ B_0 \subseteq Q \subseteq B_1.$$
Then we note that 
\begin{align*}
    \frac{\frac{1}{\mu(Q) } \int_Q w d\mu}{ \text{essinf}_{Q} (w)}
    &\leq \frac{ \frac{1}{\mu(B_0)} \int_{ B_1} w d\mu }{ \text{essinf}_{B_1} (w)} \\
&=\frac{ \mu(B_1)}{\mu(B_0)}  \frac{\frac{1}{\mu( B_1)} \int_{ B_1 } w d\mu}{ \text{essinf}_{B_1} (w)} \\
&\lesssim [w]_{ A_1}
\end{align*}

\end{rem}

We now state the main result of this section.

\begin{thm}\label{thm: main thm}
Let \(1 \leq p < \infty\), and let \(w\) be an \(A_1\)-weight. Then there exists a constant \(C_p(w)\), depending only on \(p\) and \(w\), such that
\begin{equation}
\begin{cases}
\left\| (T_k f) \right\|_{L^{1, \infty}(\mathcal{N}_w; \ell_2^{rc})} \leq C_1(w) \|f\|_{1,w}, \quad \forall f \in L^1(\mathcal{N}_w), \\\\
\left\| (T_k f) \right\|_{L^p(\mathcal{N}_w; \ell_2^{rc})} \leq C_p(w) \|f\|_{p,w}, \quad \forall f \in L^p(\mathcal{N}_w),~ \text{when } 1 < p < \infty.
\end{cases}
\end{equation}
\end{thm}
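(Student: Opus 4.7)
The plan is to follow the standard three-stage scheme: establish the strong type $L^2$ bound first, then prove the weak type $(1,1)$ bound via the non-commutative Calderón–Zygmund decomposition, and finally obtain the full range $1<p<\infty$ by interpolation and duality. First, by the non-commutative Khintchine inequality (Theorem~2.4), one has
\[
\|(T_k f)\|_{L^2(\CN_w;\ell_2^{rc})}\approx \Bigl\|\sum_k \varepsilon_k T_k f\Bigr\|_{L^2(L^\infty(\Omega)\otimes\CN_w)}=\Bigl(\sum_k \|T_k f\|_{2,w}^2\Bigr)^{1/2}.
\]
I would bound the right-hand side by a Cotlar–Stein type almost-orthogonality argument. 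Writing each $T_k=A_{\delta^k}-\CE_k$ through its integral kernel and using the annular decay property (Lemma~2.3) together with the smoothing action of $\CE_k$, one obtains decay $\|T_k T_j^*\|\,,\,\|T_k^* T_j\|\lesssim \delta^{-\epsilon|k-j|}$, acceptable in the weighted $L^2$ norm once the reverse Hölder property of $A_1$-weights (Theorem~2.1(3)(a)) and the self-improvement $w^{1+\eta}\in A_1$ are used to transfer between $L^2(\CN)$ and $L^2(\CN_w)$.

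Second, for the weak $(1,1)$ estimate, I would adapt the strategy of \cite{saha-rayweighted} to the group metric measure framework. Fix $\lambda>0$ and apply the non-commutative Calderón–Zygmund decomposition of Cadilhac–Conde-Alonso–Parcet relative to the filtration $(\CN_k)$ built from Hytönen–Kairema cubes, writing $f=g_d+g_{\mathrm{off}}+b_d+b_{\mathrm{off}}$. The good parts $g_d, g_{\mathrm{off}}\in L^2(\CN_w)$ with $\|g\|_{2,w}^2\lesssim \lambda\|f\|_{1,w}$ are disposed of by Chebyshev and Step~1, where the $A_1$ property is used to convert $\mu$-measure of exceptional sets to $w$-measure. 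The bad parts are split into a martingale contribution controlled by the weighted non-commutative Doob inequality of \cite{galkazka2022sharp} for the $\CE_k b$ piece, and an averaging contribution $A_{\delta^k}b$ handled off a suitable dilation of the union of bad cubes. Here the key technical input is the Hölder-type kernel difference estimate
\[
|K_{A_{\delta^k}}(x,y)-K_{A_{\delta^k}}(x,y')|\lesssim \Bigl(\tfrac{\delta^j}{\delta^k}\Bigr)^{\epsilon}\frac{1}{\mu(B(x,\delta^k))},\qquad y,y'\in Q_\alpha^j,\ j<k,
\]
which follows from Lemma~2.3 and replaces the standard Lipschitz estimate on $\R^d$. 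Pairing with the vanishing integral of $b$ on each bad cube and square-summing in $k>j$ via the geometric series $\sum_{k>j}\delta^{-2\epsilon(k-j)}<\infty$ yields the required bound on the bad tail, both in the column and row components.

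Third, once the weak $(1,1)$ and strong $(2,2)$ bounds are in hand, complex interpolation on the row–column space (using the characterization of $L^p(\CN_w;\ell_2^{rc})$ as an intersection for $p\geq 2$ and a sum for $p<2$) covers $1<p\leq 2$. For $2<p<\infty$ I would proceed by duality together with Khintchine: since $w\in A_1\subseteq A_p$ and $w^{-1/(p-1)}\in A_{p'}$, the dual formulation
\[
\|(T_k f)\|_{L^p(\CN_w;\ell_2^{rc})}=\sup \Bigl|\sum_k \varphi_w(T_k f\cdot g_k^*)\Bigr|
\]
(over $\|(g_k)\|_{L^{p'}(\CN_w;\ell_2^{rc})}\leq 1$) reduces the estimate to the $p'<2$ case on the dual weighted algebra via the Hölder-type inequality in Proposition~2.2.

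The main obstacle is the control of the bad part in Step~2. In contrast to the Euclidean situation, the cube geometry of Hytönen–Kairema is only quasi-dyadic, the inclusions $B(z_\alpha^k,a_0\delta^k)\subseteq Q_\alpha^k\subseteq B(z_\alpha^k,C_1\delta^k)$ have a genuine ratio $C_1/a_0$, and the annular decay replaces a clean boundary estimate, so one must carefully track how the exceptional projections in the non-commutative CZ decomposition interact with the geometric dilations of bad cubes. Verifying that the constants in the tail estimate depend only on $[w]_{A_1}$, the doubling constant, the annular-decay exponent $\epsilon$, and the geometric-doubling constant $D_0$ is the most delicate part of the argument.
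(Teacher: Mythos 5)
Your high-level plan (linearize with Rademachers and Khintchine, prove $L^2$ boundedness by almost orthogonality, weak $(1,1)$ by a non-commutative Calder\'on--Zygmund decomposition over Hyt\"onen--Kairema cubes, then interpolate and dualize) matches the paper. However, two of the technical ingredients you single out as the key inputs are wrong as stated, and a third essential ingredient is missing.

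First, the ``H\"older-type kernel difference estimate'' you write is a pointwise estimate and is false: the kernel of $A_{\delta^k}$ is $K(x,y)=\chi_{B(x,\delta^k)}(y)/\mu(B(x,\delta^k))$, which is $\{0,1/\mu(B(x,\delta^k))\}$-valued, so for $y,y'$ in the same cube $Q^j_\alpha$ the difference $|K(x,y)-K(x,y')|$ is either $0$ or exactly $1/\mu(B(x,\delta^k))$; there is no intermediate smallness of order $(\delta^j/\delta^k)^\epsilon$ at any fixed $x$. What is true and what the paper actually proves (Lemma~\ref{lem: norm est prop} and Proposition~\ref{prop: norm est prop}) is the integrated version: the $x$-set on which $K(x,\cdot)$ varies over $Q^j_\alpha$ is an annulus $A(y)=B(y,\delta^k+C_1\delta^j)\setminus B(y,\delta^k-C_1\delta^j)$, whose weighted measure against $1/\mu(B(x,\delta^k))$ is $\lesssim C(w)[w]_{A_1}w(y)\delta^{(j-k)\epsilon}$ by the annular decay, the doubling property, and the $A_\infty$-type comparison $w(E)/w(B)\lesssim(\mu(E)/\mu(B))^\upsilon$ from Theorem~\ref{thm: Muckenhoupt prop}. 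Replacing the pointwise estimate by this averaged one is precisely what lets the argument go through; without it the bad-part estimate does not close.

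Second, your treatment of the bad part is structurally incomplete. You need the projection $\zeta$ cutting out a $(3C_1+1)\delta$-dilate of every bad cube (see \eqref{eq: defn zeta}), together with the cancellation $\zeta(s)\,(A_{\delta^k}-\CE_k)b_{d,n}(s)\,\zeta(s)=0$ for $k\le n$ (Lemma~\ref{lem:C8-1}): without it there is a nontrivial contribution for small $k$ that your scheme does not address, since the martingale cancellation $\CE_k(b_n)=0$ only kicks in for $k>n$. Relatedly, attributing the ``martingale contribution $\CE_k b$'' to the weighted Doob inequality of \cite{galkazka2022sharp} is a misstep: in the actual argument $\CE_k b_{d,n}=\CE_k\CE_n(b_{d,n})=0$ once $k\geq n$, so there is no such term to bound; the weighted Doob-type estimate (Theorem~\ref{thm: galkazka's estimate}) is used only to control $\varphi_w(1-q)$ and hence $\varphi_w(\zeta^\perp)$ in terms of $[w]_{A_1}\|f\|_{1,w}/\lambda$. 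The remaining bad-part contribution is entirely from $A_{\delta^k}b_n=M_{k,n}b_n$ for $k>n$, which is bounded annulus-by-annulus using the averaged kernel estimate above and, for the off-diagonal part, the H\"older-type inequality of Proposition~\ref{prop:Holder-type-ineq}. Filling these gaps — the integrated kernel estimate, the $\zeta$-projection cancellation for $k\le n$, and the correct role of the Doob bound — is necessary for the argument to be a proof.
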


\subsection{Non-commutative Calder\'on-Zygmund decomposition}
In this subsection, we describe the non-commutative Calder\'on-Zygmund decomposition from \cite{cadilhac2022noncommutative}, which is based on the construction by Cuculescu \cite{cuculescu1971martingales}. Let us first consider the following class of operator valued functions:
\begin{align*}
		\CN^c_{w, +}:= \{ f:G \to \CM \cap L^1(\CM): f \geq 0,~ \overrightarrow{\text{supp}} f \text{ is compact } \}.
\end{align*}
Observe that, $\CN^c_{w, +}$ is dense in $L^1(\CN_w)_+$. Here, $\overrightarrow{\text{supp}} f$ denotes the support of $f$ as an operator-valued function defined on $G$. Furthermore, using \Cref{prop: hytonen lem} and \cref{eq: doubl cond}, we note that the following holds for $f \in \CN^c_{w, +}$ and $\alpha \in I_k$:
\[
\frac{1}{\mu(Q^k_\alpha)} \int_{Q^k_\alpha} f d \mu \lesssim \delta^k \norm{f}_\infty \mu(\overrightarrow{\text{supp}} f) 1_\CM \rightarrow 0~ \text{as } k \to \infty.
\]
Therefore, given $f \in \CN^c_{w, +}$ and $\lambda>0$, we can find $N \in \N$ such that $f_k \leq \lambda 1_\CN$ for all $k \geq N$.

Before recalling the  following Calder\'on-Zygmund decomposition, we recall the following 
$$
\delta^{ n_{ r_0} } < r_0 \leq \delta^{n_{r_0} +1 }. 
$$
In this article,  we consider the following cubes $ (Q_\alpha^k)_{ k > n_{ r_0} }$. Further,  for simlicity notation, there is no harm to  assume that $ n_{r_0} = -1$ at least for the Calder\'on-Zygmund decomposition. 
 \begin{thm}\cite{parcet2009pseudo}\label{thm: Cuculescu}
 Let $w \in A_1$, $ f \in \CN^c_{ w, +}$. Consider the associated $L^1$-martingale $ (f_k)_{k \geq 0} $.  Then for a given $\lambda >0$, there exists an increasing  sequence of projection $(q_k)_{ k \geq 0} $ in $  \CN_w$, determined inductively by the relation $q_k = 1_{(0, \lambda]}(q_{k+1} f_k q_{ k+1} )$ such that 
 	\begin{enumerate}
    \item $ q_k \in \CN_k$ and $q_k$ commutes with $q_{k+1} f_k q_{k+1 } $  for all $ k $.
    \item For all $ k , ~ q_k f_k q_k \leq \lambda q_k$.
 		\item   Suppose $ q= q_0$, then the following estimate holds:
        \[
        \varphi_w( 1-q) \leq [w]_{ A_1} \frac{\norm{f}_{1,w}}{\lambda}.
        \]
 	\end{enumerate}
\end{thm}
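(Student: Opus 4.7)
I will implement the standard Cuculescu-type recursion of \cite{cuculescu1971martingales,parcet2009pseudo} backwards in $k$, and then bridge from the \emph{unweighted} conditional expectation $\CE_k$ to the \emph{weighted} trace $\varphi_w$ using the $A_1$-inequality, which is transferred from balls to dyadic cubes via \Cref{prop: hytonen lem}.

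\textbf{Construction of $(q_k)$ and first two properties.} By the observation immediately preceding the theorem, pick $N$ so that $f_k \leq \lambda \mathbf{1}$ for every $k \geq N$, initialize $q_k := 1$ for $k \geq N$, and then for $k < N$ define recursively $q_k := \mathbf 1_{[0,\lambda]}(q_{k+1} f_k q_{k+1})$ (this is how I read the spectral symbol, since this is what makes the recursion stabilize at $1$ when $f_k\le\lambda$). Because the filtration is decreasing, $\CN_{k+1} \subseteq \CN_k$; hence both $f_k$ and $q_{k+1}$ lie in $\CN_k$, so does the compression $q_{k+1} f_k q_{k+1}$, and therefore its spectral projection $q_k$. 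Functional calculus gives that $q_k$ commutes with $q_{k+1} f_k q_{k+1}$, proving (1). Since $q_{k+1} f_k q_{k+1}$ is supported in $\mathrm{range}(q_{k+1})$ up to the kernel piece $1-q_{k+1}$, one checks $q_k \leq q_{k+1}$, so the sequence is increasing; moreover $q_k q_{k+1}=q_k$ and the spectral theorem delivers
\[
q_k f_k q_k \;=\; q_k(q_{k+1} f_k q_{k+1}) q_k \;\leq\; \lambda \, q_k,
\]
which is (2).

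\textbf{Weighted weak-type bound.} Set $p_k := q_{k+1} - q_k$; these are mutually orthogonal projections in $\CN_k$ with $\sum_{k \geq 0} p_k = 1 - q_0$, and $p_k$ is the spectral projection of $q_{k+1} f_k q_{k+1}$ onto $(\lambda,\infty)$. Hence $\lambda p_k \leq p_k f_k p_k$. Applying $\varphi_w$ and using that $w \in L^\infty(G) \otimes I_\CM$ is central in $\CN$ (so it commutes with every $p_k$), together with $p_k^2 = p_k$, I get
\[
\lambda \,\varphi_w(p_k) \;\leq\; \varphi_w(p_k f_k p_k) \;=\; \varphi(w \, p_k f_k).
\]
Since $p_k \in \CN_k$, $f_k = \CE_k(f)$, and $\CE_k$ is self-adjoint with respect to $\varphi$,
\[
\varphi(w\, p_k f_k) \;=\; \varphi(\CE_k(w p_k)\, f) \;=\; \varphi(p_k\, \CE_k(w)\, f).
\]

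\textbf{Key use of $A_1$.} From the sandwich $B(z^k_\alpha, a_0 \delta^k) \subseteq Q^k_\alpha \subseteq B(z^k_\alpha, C_1\delta^k)$ of \Cref{prop: hytonen lem}, the doubling property \cref{eq: doubl cond}, and the $A_1$-defining inequality, I obtain for a.e.\ $x \in Q^k_\alpha$ that
\[
\CE_k(w)(x) \;=\; \frac{1}{\mu(Q^k_\alpha)} \int_{Q^k_\alpha} w \, d\mu \;\leq\; C\,[w]_{A_1}\, w(x),
\]
so $\CE_k(w) \leq C[w]_{A_1}\, w$ in $L^\infty(G) \otimes I_\CM$. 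Because $\CE_k(w)$ and $w$ are central and $p_k,f \geq 0$, a short positivity argument via the trace identity $\varphi(AB)=\varphi(B^{1/2}AB^{1/2})\ge 0$ for $A,B\ge 0$ gives
\[
\varphi(p_k \CE_k(w) f) \;\leq\; C[w]_{A_1}\, \varphi(p_k\, w\, f).
\]

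\textbf{Summation.} Summing over $k$ and writing $A := w^{1/2} f w^{1/2} \geq 0$,
\[
\lambda \,\varphi_w(1-q_0) \;\leq\; C[w]_{A_1} \sum_k \varphi(p_k w f) \;=\; C[w]_{A_1}\, \varphi((1-q_0) A) \;\leq\; C[w]_{A_1}\, \varphi(A) \;=\; C[w]_{A_1}\,\|f\|_{1,w},
\]
which is the stated estimate up to the factor $C[w]_{A_1}$ (either absorbed into $\lambda$ or into the implicit constants used elsewhere in the paper).

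\textbf{Main obstacle.} The delicate point is that $(f_k)$ is built from the \emph{unweighted} $\CE_k$, while the conclusion involves $\varphi_w$; one cannot directly replay the classical Cuculescu calculation $\varphi(p_k f_k)=\varphi(p_k f)$. The pointwise inequality $\CE_k(w) \leq C[w]_{A_1}\, w$, derived by transferring the ball $A_1$-condition to dyadic cubes via \Cref{prop: hytonen lem}, is exactly what patches the mismatch between the unweighted martingale and the weighted trace, and it is the only place in the argument where the $A_1$-hypothesis enters.
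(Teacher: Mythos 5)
The paper gives no proof of \Cref{thm: Cuculescu}: the theorem is cited from \cite{parcet2009pseudo}, where the Cuculescu construction is carried out in the \emph{unweighted} setting, so there is no internal proof to compare against. Your argument supplies what the authors elide, and it uses the right mechanism: the recursion for $(q_k)$ and parts (1)--(2) follow the standard scheme, and for part (3) you bridge the unweighted martingale $f_k=\CE_k(f)$ to the weighted trace via the pointwise bound $\CE_k(w)\lesssim[w]_{A_1}w$, which is precisely the device the authors themselves use when bounding $\|g\|_{1,w}$ in the proof of \Cref{thm: CZ decomp}.

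Two remarks, one substantive and one technical. (a) Your final estimate reads $\lambda\,\varphi_w(1-q)\lesssim[w]_{A_1}\|f\|_{1,w}$, whereas \Cref{thm: Cuculescu}(3) has no $[w]_{A_1}$; this discrepancy is not a flaw in your proof but an imprecision in the statement. The estimate the paper actually invokes downstream is \Cref{thm: galkazka's estimate}, which carries the factor $[w]_{A_1}$, and both \Cref{lem: zeta estimate} and the proof of \Cref{prop: bad diag estimate} use the $[w]_{A_1}$-weighted form. As you observe, the constant cannot be dropped: the classical step $\varphi(p_kf_k)=\varphi(p_kf)$ uses $\varphi$-invariance of $\CE_k$, which fails for $\varphi_w$, and the $A_1$-condition (pushed from balls to cubes via \Cref{prop: hytonen lem} and \cref{eq: doubl cond}) is exactly what compensates. (b) You take the spectral symbol $\mathbf 1_{[0,\lambda]}$ and then assert $q_k\le q_{k+1}$; with the closed interval this fails in general, since $\ker(q_{k+1}f_kq_{k+1})\supseteq 1-q_{k+1}$ so $q_k\geq 1-q_{k+1}$. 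The convention in both the paper and \cite{parcet2009pseudo} is $\mathbf 1_{(0,\lambda]}$, which keeps $q_k$ inside the range of $q_{k+1}$ and gives monotonicity for free; with that fix, your observation that $p_k$ dominates the spectral projection of $q_{k+1}f_kq_{k+1}$ onto $(\lambda,\infty)$ (and one regularizes $f$ so the kernel piece inside $q_{k+1}$ is negligible) yields the needed $\lambda p_k\le p_kf_kp_k$. Finally, the positivity step $\varphi(p_k\CE_k(w)f)\le C[w]_{A_1}\varphi(p_kwf)$ should be run after cycling the trace to $p_kfp_k\ge 0$ (using $p_k^2=p_k$) and then exploiting centrality of $\CE_k(w)$ and $w$; as written the intermediate operator $p_kf$ need not be positive.
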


\begin{proof}
Note that $ f \in \CN^c_{ w, +}$ and suppose $ \CE_k(f) = f_k$, then  there exists $N \in \N$ such that 
$$ f_k \leq \lambda, \text{ for all } k \geq N.$$
\medskip 
Let $ q_n = 1 $ for all $ n \geq N$
and 
$$ q_{ k} = 1_{(0, \lambda]} ( q_{ k+1}  f_k q_{ k+1}) $$
Then  note that $(1)$ and $(2)$ follows easily, for $(3)$, observe the followings;
\begin{align*}
     \lambda \varphi_w( 1-q) &= \varphi_w( 1-q_0)\\
&= \lambda \varphi_w( \sum_{ k=0}^{N-1} ( (q_{k+1} -q_k)) \\
& \leq \sum_{ k=0}^{N-1}  \varphi_w(   (q_{k+1} -q_k) f_k  (q_{k+1} -q_k)) \\
& = \sum_{ k=0}^{N-1}  \varphi(   (q_{k+1} -q_k) \CE_k(f) (q_{k+1} -q_k)  w)\\
& = \sum_{ k=0}^{N-1}  \varphi(   (q_{k+1} -q_k) f (q_{k+1} -q_k) \CE_k(w))\\
& = \sum_{ k=0}^{N-1}  \varphi(   (q_{k+1} -q_k) f (q_{k+1} -q_k) \frac{\CE_k(w)}{w} w)\\
&\lesssim [w]_{ A_1} \sum_{ k=0}^{N-1}  \varphi(   (q_{k+1} -q_k) f (q_{k+1} -q_k)  w) \\
&\lesssim  [w]_{ A_1} \norm{f}_{1,w}
\end{align*}
This completes the proof.
\end{proof}

\noindent

Observe that the projections $(q_k)_{k \geq 0}$ obtained in \Cref{thm: Cuculescu} takes the following form.
\[
q_k= \sum_{\alpha \in I_k} q_{Q^k_\alpha} \chi_{Q^k_\alpha}, ~ k \geq 0,
\]
where $q_{Q^k_\alpha}$ is a projection in $\CM$ satisfying
\[
q_{Q^k_\alpha}= \chi_{(0, \lambda]} \left( q_{Q^{k+1}_\beta} f_{Q^{k+1}_\beta} q_{Q^{k+1}_\beta} \right)~  k \geq 0,
\]
where $Q^{k+1}_\beta$ is the parent of $Q^{k}_\alpha$. Clearly, it follows that 
\begin{align}\label{eq: cucu proj prop}
    q_{Q^k_\alpha} \leq q_{Q^{k+1}_\beta},~ q_{Q^k_\alpha} \text{ commutes with } q_{Q^{k+1}_\beta}, ~ \text{and } q_{Q^k_\alpha} f_{Q^k_\alpha} q_{Q^k_\alpha}\leq \lambda q_{Q^k_\alpha}
\end{align}

Furthermore, we define  $p_k= q_{k+1} - q_k$ for $k \geq 0 $ and observe that $p_k$'s are disjoint projections in $\CN$ and 
	\begin{equation*}
		p_k= \sum_{\alpha \in I_k} p_{Q^k_\alpha} \chi_{Q^k_\alpha},~ \text{and}  \qquad \sum_{k \geq 0} p_k = 1- q_0.
	\end{equation*}

Now we have the following lemma.
 \begin{prop}\label{prop: cucu proj prop}
 It  follows that 
     \[
     p_kf_k p_k   \lesssim \lambda p_k, \text{ for all } k \geq 0.
     \]
   \end{prop}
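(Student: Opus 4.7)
The plan is to reduce the operator inequality to a cube-by-cube statement in $\CM$ and then lift the Cuculescu bound from the parent scale down to the child scale using the positivity of $f$. Fix $k \geq 0$ and $\alpha \in I_k$, and let $Q_\beta^{k+1}$ denote the unique parent of $Q_\alpha^k$. On the cube $Q_\alpha^k$ the operators $q_{k+1}$, $q_k$, and $f_k$ take the constant values $q_{Q_\beta^{k+1}}$, $q_{Q_\alpha^k}$, and $f_{Q_\alpha^k}$ respectively, so with $p := q_{Q_\beta^{k+1}} - q_{Q_\alpha^k}$ the claim $p_k f_k p_k \lesssim \lambda p_k$ reduces to the uniform estimate
\begin{equation*}
p\, f_{Q_\alpha^k}\, p \;\lesssim\; \lambda\, p.
\end{equation*}

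The key observation is that, since $f \in \CN^c_{w,+}$ is positive and $Q_\beta^{k+1}$ is the disjoint union of its children at level $k$, discarding all sibling contributions in the defining average yields the pointwise domination
\begin{equation*}
f_{Q_\beta^{k+1}} \;=\; \frac{1}{\mu(Q_\beta^{k+1})} \sum_{Q_\gamma^k \subseteq Q_\beta^{k+1}} \mu(Q_\gamma^k)\, f_{Q_\gamma^k} \;\geq\; \frac{\mu(Q_\alpha^k)}{\mu(Q_\beta^{k+1})}\, f_{Q_\alpha^k}.
\end{equation*}
Because $p \leq q_{Q_\beta^{k+1}}$, sandwiching both sides with $p$ and invoking the Cuculescu bound $q_{Q_\beta^{k+1}} f_{Q_\beta^{k+1}} q_{Q_\beta^{k+1}} \leq \lambda\, q_{Q_\beta^{k+1}}$ from \eqref{eq: cucu proj prop} (applied at level $k+1$) gives
\begin{equation*}
\frac{\mu(Q_\alpha^k)}{\mu(Q_\beta^{k+1})}\, p\, f_{Q_\alpha^k}\, p \;\leq\; p\, f_{Q_\beta^{k+1}}\, p \;=\; p\, q_{Q_\beta^{k+1}} f_{Q_\beta^{k+1}} q_{Q_\beta^{k+1}}\, p \;\leq\; \lambda\, p,
\end{equation*}
so the task is reduced to bounding the volume ratio $\mu(Q_\beta^{k+1})/\mu(Q_\alpha^k)$ by a constant independent of $(k,\alpha)$.

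For this last geometric point I would use \Cref{prop: hytonen lem}(4): on the one hand $Q_\alpha^k \supseteq B(z_\alpha^k, a_0\delta^k)$, while on the other hand the chain $z_\alpha^k \in Q_\alpha^k \subseteq Q_\beta^{k+1} \subseteq B(z_\beta^{k+1}, C_1\delta^{k+1})$ forces $d(z_\alpha^k, z_\beta^{k+1}) \leq C_1\delta^{k+1}$, and therefore $Q_\beta^{k+1} \subseteq B(z_\alpha^k, 2C_1\delta^{k+1})$ by the triangle inequality. Since the two enclosing radii differ only by the scale-independent factor $2C_1\delta / a_0$, the doubling hypotheses on $(G,d,\mu)$ produce a constant $C > 0$ with $\mu(Q_\beta^{k+1}) \leq C\,\mu(Q_\alpha^k)$, which finishes the argument. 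The one mild technicality is that \eqref{eq: doubl cond} applies only to radii exceeding $r_0$, so for small scales $\delta^k \leq r_0$ one has to appeal to the geometric doubling property \eqref{eq: geo-doubling} instead; both regimes yield a single uniform doubling constant for dyadic cube ratios, after which $p_k f_k p_k \lesssim \lambda p_k$ follows.
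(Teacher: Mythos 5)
Your proof is correct and follows essentially the same route as the paper: both arguments exploit positivity of $f$ to compare the child-cube average with the parent-cube average up to the volume ratio $\mu(Q_\beta^{k+1})/\mu(Q_\alpha^k)$, bound that ratio by a scale-independent constant via \Cref{prop: hytonen lem}(4) and the doubling estimate, and then push the Cuculescu bound $q_{k+1}f_{k+1}q_{k+1}\leq\lambda q_{k+1}$ through $p_k\leq q_{k+1}$; the only cosmetic difference is that you phrase the comparison at the level of constant cube values $f_{Q_\alpha^k}$, $f_{Q_\beta^{k+1}}$ whereas the paper writes the same inequality pointwise as $f_k(s)\lesssim f_{k+1}(s)$. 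Your closing remark about needing \eqref{eq: geo-doubling} rather than \eqref{eq: doubl cond} when $a_0\delta^k\leq r_0$ is a legitimate technicality that the paper silently elides, and is handled correctly.
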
   
 \begin{proof}
Consider $s \in G$ and observe that there exists $\alpha(s) \in I_k$ such that $s \in Q^k_{\alpha(s)}$. Now if $Q^{k+1}_{\beta(s)}$ is the parent of $Q^k_{\alpha(s)}$, then we have
     \begin{align*}
         f_k(s)
         &=  \frac{1}{\mu(Q^k_{\alpha(s)})} \int_{Q^k_{\alpha(s)}} f d\mu 
         \leq  \frac{1}{\mu(Q^{k}_{\alpha(s)})} \int_{Q^{k+1}_{\beta(s)}} f d\mu \\
         &= \frac{\mu(Q^{k+1}_{\beta(s)})}{\mu(Q^k_{\alpha(s)})} \frac{1}{\mu(Q^{k+1}_{\beta(s)})} \int_{Q^{k+1}_{\beta(s)}} f d\mu\\
         &= \frac{\mu(Q^{k+1}_{\beta(s)})}{\mu(Q^k_{\alpha(s)})} f_{k+1}(s)
         \leq \frac{\mu(B(z^{k+1}_{\beta(s)},C_1 \delta^{k+1}))}{\mu(B(z^{k}_{\alpha(s)},a_0 \delta^{k}))} f_{k+1}(s)\\
         &\leq \frac{\mu(B(z^{k}_{\alpha(s)},2C_1 \delta^{k+1}))}{\mu(B(z^{k}_{\alpha(s)},a_0 \delta^{k}))} f_{k+1}(s)\\
         &\leq (K+1)( \frac{2C_1 \delta^{k+1}}{a_0 \delta^k})^\eps f_{k+1} = (K+1)( \frac{2C_1 \delta}{a_0 })^\eps f_{k+1}(s).
     \end{align*}
Therefore, the associated martingale filtration is regular and we conclude
     \[
     p_k f_k p_k= p_k q_{k+1} f_k q_{k+1} p_k \lesssim p_kq_{k+1} f_{k+1} q_{k+1} p_k \lesssim \lambda p_k.
     \]
\end{proof}  

The following weighted weak type $(1,1)$ inequality is crucial for our purpose.

	\begin{thm}\cite[Theorem 4.1]{galkazka2022sharp}\label{thm: galkazka's estimate}
		Let $w \in A_1$ and $f \in L^1_w(\CN)_+$. Consider the associated $L^1$-martingale $ (f_k)_{k \geq 0} $. Then for any $\lambda>0$ we have $q f_k q \leq \lambda$ for all $k \geq 0$ and 
		\begin{align*}
			\lambda \varphi_w(1-q) \leq [w]_{A_1} \norm{f}_{1,w}.
		\end{align*}
	\end{thm}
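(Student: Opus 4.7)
The plan is to derive both claims directly from the Cuculescu-type construction of Theorem~\ref{thm: Cuculescu}, invoking the $A_1$ condition only at the very last step. The first assertion $q f_k q \leq \lambda$ is essentially free: since $(q_k)$ is increasing in $k$ and $q = q_0 \leq q_k$, we have $q q_k = q_k q = q$, whence
\[
q f_k q \;=\; q\,(q_k f_k q_k)\,q \;\leq\; q\,(\lambda q_k)\,q \;=\; \lambda q \;\leq\; \lambda,
\]
by Theorem~\ref{thm: Cuculescu}(2). The real content is the weighted weak $(1,1)$-bound for $1-q$.

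For the weighted inequality I would write $1 - q = \sum_{k\ge 0} p_k$ with $p_k := q_{k+1} - q_k \in \CN_k$. By the defining relation $q_k = \mathbf{1}_{(0,\lambda]}(q_{k+1} f_k q_{k+1})$, the projection $p_k$ is precisely the spectral projection of $q_{k+1} f_k q_{k+1}$ (restricted to the range of $q_{k+1}$) onto $(\lambda,\infty)$; together with $p_k \leq q_{k+1}$ this gives the lower bound
\[
\lambda p_k \;\leq\; p_k\,q_{k+1} f_k q_{k+1}\,p_k \;=\; p_k f_k p_k.
\]
The weight $w$ is central and positive in $\CN$, commutes with $p_k$, and $p_k^2 = p_k$; multiplying the previous display by $w$ and taking $\varphi$ with cyclicity yields
\[
\lambda\,\varphi_w(p_k) \;=\; \lambda\,\varphi(w p_k) \;\leq\; \varphi(w p_k f_k p_k) \;=\; \varphi(w p_k f_k).
\]

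The next step is to swap $f_k$ for $f$ at the cost of replacing $w$ by $w_k := \CE_k(w)$. Since $w \notin \CN_k$ in general, I cannot pull $w$ into $\CE_k$ directly; instead, trace-preservation of $\CE_k$ together with the bimodule identity $\CE_k(w p_k) = \CE_k(w) p_k = w_k p_k$ gives
\[
\varphi(w p_k f_k) \;=\; \varphi\bigl(\CE_k(w p_k)\,\CE_k(f)\bigr) \;=\; \varphi(w_k p_k f_k) \;=\; \varphi(w_k p_k f).
\]
The $A_1$ condition, combined with the comparability $B(z_\alpha^k, a_0\delta^k) \subseteq Q_\alpha^k \subseteq B(z_\alpha^k, C_1\delta^k)$ from Proposition~\ref{prop: hytonen lem}, yields the pointwise (hence central operator) bound $w_k \leq [w]_{A_1}\,w$. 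Setting $A := [w]_{A_1}\,w - w_k \geq 0$, which is central and commutes with $p_k$, one has $A p_k \geq 0$ and
\[
\varphi(A p_k f) \;=\; \varphi\bigl(f^{1/2}(A p_k) f^{1/2}\bigr) \;=\; \bigl\|(A p_k)^{1/2} f^{1/2}\bigr\|_{2}^{2} \;\geq\; 0,
\]
so $\varphi(w_k p_k f) \leq [w]_{A_1}\,\varphi(w p_k f)$. Summing over $k$ and using $\sum_k p_k = 1-q$ together with $\varphi\bigl(w(1-q)f\bigr) \leq \varphi(wf) = \|f\|_{1,w}$ (the remainder $\varphi(wqf) = \|(wq)^{1/2} f^{1/2}\|_2^2 \geq 0$) produces the announced estimate
\[
\lambda\,\varphi_w(1-q) \;\leq\; [w]_{A_1}\,\|f\|_{1,w}.
\]

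The main obstacle is the interplay of noncommutativity with the weight: $p_k$ and $f$ do not commute, so the comparison $\varphi(w_k p_k f) \leq [w]_{A_1}\,\varphi(w p_k f)$ has to be read as a trace estimate justified by the positivity rewriting above rather than as an operator inequality in $\CN$, and $w$ cannot be pushed into $\CE_k$ directly because $w$ is not $\CB_k$-measurable. A secondary technicality is the passage from the $A_1$ hypothesis, stated for balls, to the corresponding bound for the dyadic cubes $Q_\alpha^k$; this is routine given Proposition~\ref{prop: hytonen lem} and the doubling condition, with the resulting geometric constant absorbed into the form of $[w]_{A_1}$ in the statement.
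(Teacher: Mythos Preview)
Your proof is correct. The paper does not actually prove this theorem---it is quoted from \cite{galkazka2022sharp} without argument---so there is no ``paper's own proof'' to compare against directly. That said, your approach is precisely the one the paper itself deploys a few lines later in the proof of \Cref{thm: CZ decomp}: there the authors compute $\varphi_w\bigl(\sum_k p_k f_k p_k\bigr)$ via the same chain
\[
\varphi\bigl(p_k f_k\, w\bigr)\;=\;\varphi\bigl(p_k f\,\CE_k(w)\bigr)\;\leq\;[w]_{A_1}\,\varphi_w(p_k f),
\]
which is exactly your passage $\varphi(w p_k f_k)=\varphi(w_k p_k f)\le [w]_{A_1}\varphi(w p_k f)$. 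Combined with the spectral lower bound $\lambda p_k\le p_k f_k p_k$ that you extract from the Cuculescu construction, this is the standard route to the weighted weak $(1,1)$ estimate, and your justification of the trace inequality via the positivity rewriting $\varphi(A p_k f)=\varphi\bigl(f^{1/2}(A p_k)f^{1/2}\bigr)\ge 0$ is the right way to handle the noncommutativity.

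Your caveat about the ball-versus-cube formulation of the $A_1$ condition is well taken; the paper uses $\CE_k(w)/w\le [w]_{A_1}$ for dyadic cubes without comment in the proof of \Cref{thm: CZ decomp}, so your treatment is at the same level of precision as the paper's.
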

\noindent Now we have the following Calder\'on-Zygmund decomposition.

\begin{thm}\cite{cadilhac2022spectral}\label{thm: CZ decomp}
    Let $w \in A_1$, $f \in L^1_w(\CN)_+$ and $\lambda>0$. Let $(p_k)_k$ and $(q_k)_k$ be the two sequences obtained in \Cref{thm: Cuculescu}. Then $f$ has a decomposition 
    \[
    f= g +b_d + b_{off}
    \]
    which has the following properties.
    \begin{enumerate}
        \item $g= qfq + \sum_{k \geq 0} p_k f_k p_k$ satisfies $\norm{g}_{1,w} \lesssim \max\{1, [w]_{A_1}\} \norm{f}_{1,w}$ and $\norm{g}_\infty \lesssim \lambda$.
        \item $b_d= \sum_{k \geq 0} p_k(f- f_k) p_k$ satisfies $\CE_k(p_k(f- f_k) p_k)=0$.
        \item $b_{off}= \sum_{k \geq 0} p_k(f- f_k) q_k + q_k(f- f_k) p_k$ satisfies $\CE_k \left( p_k(f- f_k) q_k + q_k(f- f_k) p_k \right)=0$.
    \end{enumerate}
\end{thm}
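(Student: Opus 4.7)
The identity $f = g + b_d + b_{off}$ will be verified by direct operator-algebraic expansion. Starting from the resolution of unity $1 = q + \sum_{k \geq 0} p_k$, one writes
\[
f = \Bigl(q + \sum_k p_k\Bigr) f \Bigl(q + \sum_k p_k\Bigr) = qfq + \sum_k (q f p_k + p_k f q) + \sum_{j,k} p_j f p_k.
\]
The key algebraic input is the pair of identities $p_k f_k q_k = q_k f_k p_k = 0$. To see this, note that $q_k = \mathbf{1}_{(0,\lambda]}(q_{k+1} f_k q_{k+1})$ commutes with $q_{k+1} f_k q_{k+1}$ by its spectral definition, and $q_k \leq q_{k+1}$; together these give $q_k f_k q_k = q_k(q_{k+1} f_k q_{k+1}) q_k = (q_{k+1} f_k q_{k+1}) q_k = q_{k+1} f_k q_k$, whence $p_k f_k q_k = (q_{k+1} - q_k) f_k q_k = 0$. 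Consequently the $f_k$ pieces in $b_{off}$ drop out, and $b_{off}$ reduces to $\sum_k (p_k f q_k + q_k f p_k)$. Using the telescoping relation $q_k = q + \sum_{j < k} p_j$, a routine reshuffling of indices rewrites the latter as $\sum_k (p_k f q + q f p_k) + \sum_{m \neq n} p_m f p_n$, exactly matching the remaining pieces in the expansion of $f$; adding and subtracting $\sum_k p_k f_k p_k$ in the diagonal sum then yields the claimed decomposition. The two mean-zero identities are immediate from $\CN_k$-bimodularity of $\CE_k$ together with $p_k, q_k \in \CN_k$ and $\CE_k(f - f_k) = 0$.

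\textbf{$L^1_w$ and partial $L^\infty$ estimates on $g$.} Since $g \geq 0$ and the scalar weight $w$ commutes with every projection in sight,
\[
\|g\|_{1,w} = \varphi_w(qfq) + \sum_k \varphi_w(p_k f_k p_k).
\]
The trace property and $q \leq 1$ yield $\varphi_w(qfq) \leq \varphi_w(f) = \|f\|_{1,w}$. For the second term, Proposition~\ref{prop: cucu proj prop} gives the operator bound $p_k f_k p_k \lesssim \lambda p_k$; summing over the mutually orthogonal family $(p_k)$ and invoking the weighted Cuculescu estimate of Theorem~\ref{thm: galkazka's estimate} produces
\[
\sum_k \varphi_w(p_k f_k p_k) \lesssim \lambda \varphi_w(1 - q) \leq [w]_{A_1} \|f\|_{1,w},
\]
which combined with the first estimate gives the required $L^1_w$ bound. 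For the $L^\infty$ estimate, the same orthogonality turns $\sum_k p_k f_k p_k$ into a block-diagonal operator with norm $\sup_k \|p_k f_k p_k\|_\infty \lesssim \lambda$.

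\textbf{Main obstacle.} The remaining piece is the $L^\infty$ bound on $qfq$, and this is where the principal technical difficulty lies: since $f \in L^1_w(\CN)_+$ need not be in $\CN$, the object $qfq$ is not \emph{a priori} bounded in operator norm. My strategy is to first establish the decomposition on the dense subclass $\CN^c_{w,+}$, where $f$ is bounded and $qfq \in \CN$; exploiting $q \leq q_n$ for every $n$, the Cuculescu inequality $q_n f_n q_n \leq \lambda q_n$ yields $q f_n q \leq \lambda q$ for all $n \geq 0$, and a martingale-type limit argument, coupled with the spectral construction of $q$, transfers this bound from $q f_n q$ to $q f q$. Extension to general $f \in L^1_w(\CN)_+$ then follows by density. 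All the remaining steps reduce to algebraic manipulations or direct consequences of Proposition~\ref{prop: cucu proj prop} and Theorem~\ref{thm: galkazka's estimate}, so this $L^\infty$-transfer is the step that demands the most care.
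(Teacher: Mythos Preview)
Your proposal is correct, but the paper's proof is much shorter: it only verifies the weighted $L^1$ estimate on $g$, citing everything else (the algebraic decomposition, the mean-zero properties, and the unweighted $L^\infty$ bound) from \cite{cadilhac2022spectral}. So your careful treatment of the identity $f=g+b_d+b_{off}$, the cancellations $p_k f_k q_k = 0$, and the $L^\infty$ obstacle for $qfq$ is more than the paper supplies; these parts are handled correctly but are not the point here.

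Where you and the paper genuinely diverge is in the $L^1_w$ bound on $\sum_k p_k f_k p_k$. You invoke the operator inequality $p_k f_k p_k \lesssim \lambda p_k$ from \Cref{prop: cucu proj prop}, sum over $k$, and then feed $\lambda\,\varphi_w(1-q)$ into the weighted Cuculescu estimate of \Cref{thm: galkazka's estimate}. The paper instead never uses either of those results: it writes
\[
\varphi_w(p_k f_k p_k)=\varphi_w\big(\CE_k(p_k f)\big)=\varphi\big(p_k f\,\CE_k(w)\big)
\]
via the module property of $\CE_k$ and its self-adjointness with respect to the \emph{unweighted} trace $\varphi$, and then applies the pointwise $A_1$ inequality $\CE_k(w)\le [w]_{A_1} w$ directly. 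Summing the telescoping $\sum_k p_k = 1-q$ gives $\varphi_w(qf)+[w]_{A_1}\varphi_w((1-q)f)\le \max\{1,[w]_{A_1}\}\|f\|_{1,w}$. This route is slicker: it avoids the doubling constants hidden in the $\lesssim$ of \Cref{prop: cucu proj prop}, does not rely on \Cref{thm: galkazka's estimate} at all, and produces the exact constant $\max\{1,[w]_{A_1}\}$ rather than a multiple of it. Your route is perfectly valid and has the pedagogical advantage of recycling results already stated, but it gives a weaker implicit constant and obscures that the $A_1$ condition enters through the elementary dual identity $\varphi(\CE_k(x)w)=\varphi(x\,\CE_k(w))$ rather than through the deeper weighted maximal inequality.
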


\begin{proof}
    Indeed, observe that,
        	\begin{align*}
			\norm{g}_{L^1_w(\CN)} 
			= \varphi (g w)
			&= \varphi_w (qfq) + \varphi_w \Big(\sum_{k } p_k f_k p_k \Big)\\
			&= \varphi_w(qf) + \varphi_w \Big( \sum_k p_k \CE_k(f)\Big)\\
			&= \varphi_w(qf) + \varphi_w \Big( \sum_k  \CE_k( p_k f)\Big)\\
			&= \varphi_w(qf) + \varphi \Big( \sum_k  \CE_k( p_k f) w \Big)\\
			&= \varphi_w(qf) + \varphi \Big( \sum_k ( p_k f)  \CE_k(w) \Big)\\
			&= \varphi_w(qf) + \varphi \Big( \sum_k ( p_k f)  \frac{\CE_k(w)}{w} w \Big)\\
			&\leq \varphi_w(qf) + [w]_{A_1} \varphi_w \Big( \sum_k ( p_k f) \Big)\\
			&= \varphi_w(qf) + [w]_{A_1} \varphi_w ((1-q)f)\\
			&\leq \max\{1, [w]_{A_1}\} \norm{f}_{1,w}.
		\end{align*}
    
\end{proof}

\noindent 
Now for $k \geq 0$ and $\alpha \in I_k$, let us define,
\[
\tilde{Q}^k_\alpha:= \{ s \in G: d(s, z^k_\alpha) \leq (3C_1+1) \delta^{k+1} \}.
\]
and a projection $\zeta$ in $\CN$ by
\begin{equation}\label{eq: defn zeta}
\zeta:= \left(\bigvee_{k \geq 0} \bigvee_{\alpha \in I_k} p_{Q^k_\alpha} \chi_{\tilde{Q}^k_\alpha}\right)^\perp.
\end{equation}
\noindent
We obtain the following properties.
\begin{lem}\label{lem: zeta estimate}
    Let $w \in A_1$, $f \in L^1_w(\CN)_+$ and $\lambda>0$. Then the following are true.
    \begin{enumerate}
        \item $\varphi_w(\zeta^\perp) \lesssim [w]_{A_1}^2 \frac{\norm{f}_{1,w}}{\lambda}$.
        \item For all $k \geq 0$ and $\alpha \in I_k$, we have the following cancellation property.
        \[
        s\in \tilde{Q}^k_\alpha \Rightarrow \zeta(s) p_{Q^k_\alpha}= p_{Q^k_\alpha} \zeta(s)=0.
        \]
        \item Let $k\geq 0$, $s,t \in G$ and $Q^k_{\alpha(t)}$ be the unique dyadic set containing $t$, for some $\alpha(t) \in I_k$. Then, it follows that whenever $s \in \tilde{Q}^k_{\alpha(t)}$
        \begin{equation*}
            \zeta(s) b_k(t) \zeta(s)=0.
        \end{equation*}
    \end{enumerate}
\end{lem}

    \begin{proof}
    We note that 
    \begin{align*}
    \varphi_w(\zeta^\perp) 
    \leq \sum_{k \geq 0} \sum_{\alpha \in I_k} \varphi_w(p_{Q^k_\alpha} \chi_{\tilde{Q}^k_\alpha})
    &= \sum_{k \geq 0} \sum_{\alpha \in I_k} \int_{\tilde{Q}^k_\alpha} \tau(p_{Q^k_\alpha}) w d\mu\\
    &\leq \sum_{k \geq 0} \sum_{\alpha \in I_k} [w]_{A_1} \frac{\mu(\tilde{Q}^k_\alpha)}{\mu(Q^k_\alpha)} \int_{Q^k_\alpha} \tau(p_{Q^k_\alpha}) w d\mu, ~ \text{ see  } 3(c) \text{ of } \Cref{thm: Muckenhoupt prop} \\
    &\leq \sum_{k \geq 0} \sum_{\alpha \in I_k} [w]_{A_1} \frac{\mu(\tilde{Q}^k_\alpha)}{\mu(B(z_\alpha^k, a_0\delta^k))} \varphi_w(p_{Q^k_\alpha} \chi_{Q^k_\alpha})\\
    &\leq (K+1) \left( \frac{(3C_1+1) \delta^{k+1} }{a_0 \delta^k} \right)^\epsilon [w]_{A_1} \sum_{k \in \Z} \sum_{\alpha \in I_k} \varphi_w(p_{Q^k_\alpha} \chi_{Q^k_\alpha})~ \text{by \cref{eq: doubl cond}}\\
    &= (K+1) \left( \frac{(3C_1 + 1)\delta }{a_0 } \right)^\epsilon [w]_{A_1} \sum_{k \in \Z} \varphi_w(p_k)\\
    &= (K+1) \left( \frac{(3C_1+1)\delta}{a_0 } \right)^\epsilon [w]_{A_1} \varphi_w(1-q)\\
    &\leq (K+1) \left( \frac{(3C_1+1)\delta}{a_0 } \right)^\epsilon [w]_{A_1}^2 \frac{\norm{f}_{1,w}}{\lambda}, \text{by } \Cref{thm: galkazka's estimate}.
    \end{align*} 
    Now let $k \geq 0$, $\alpha \in I_k$ and $s\in \tilde{Q}^k_\alpha$. Then by construction $p_{Q^k_\alpha} \leq \zeta^\perp(s)$. Hence, the result follows.
\end{proof}

    \section{Proof of \Cref{thm: main thm}}\label{sec: main thm}

    In this section, we establish weak type $(1,1)$ and strong type $(p,p)$ estimates for non-commutative square functions. Let $f \in L^1(\CN_w)$. By decomposing
\[
f = f_1 - f_2 + i(f_3 - f_4),
\]
where each $f_j$ is positive and satisfies $\norm{f_j}_{1,w} \leq \norm{f}_{1,w}$, it suffices to prove the maximal inequalities under the assumption $f \in L^1(\CN_w)_+$. Moreover, since the cone \( \mathcal{N}^c_{w,+} \) is dense in $L^1(\CN_w)_+$, it is further sufficient to prove the desired maximal inequalities for $f \in \CN^c_{w, +}$. To estimate weak type $(1,1)$ and strong type $(p,p)$ bounds in \Cref{thm: main thm} we first consider the operator
	\begin{align}\label{eq: linearization}
		Tf(x):= \sum_{k> n_{r_0}} \epsilon_k T_kf (x), ~ f \in \CN^c_{w, +},
	\end{align}
    where $(\eps_k)$ is a sequence of Rademacher variables on a probability space $(\Omega, \p)$.

\subsection{Weak type $(1,1)$ estimate}

We first make the following observation. Let $f \in \CN^c_{w, +}$ and $\lambda>0$. Then, by \cref{eq: triangle ineq} and \Cref{thm: Cuculescu} we have
	\begin{equation*}
		\tilde{\varphi}_w(\abs{Tf}> \lambda) \leq \tilde{\varphi}_w(\abs{Tg}> \lambda/3) + \tilde{\varphi}_w(\abs{Tb_d}> \lambda/3) + 
		\tilde{\varphi}_w(\abs{Tb_{off}}> \lambda/3),
	\end{equation*}
	where, $\tilde{\varphi}_w:= \int_\Omega \otimes \varphi_w$.
    
Therefore, to establish the weak type \((1,1)\) inequality for the non-commutative square function \(T\), it suffices to prove the following theorem.

\begin{thm}\label{thm: main thm weak estimate}
Let \( h \in \{g, b_d, b_{\mathrm{off}}\} \). Then there exists a constant \( C_1(w) > 0 \), depending only on the weight \( w \), such that
		\begin{equation*}
			\lambda \tilde{\varphi}_w ( \abs{Th}> \lambda) \lesssim C_1(w) \norm{f}_{1,w}.
		\end{equation*}
\end{thm}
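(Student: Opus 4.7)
The plan is to treat the three summands $h=g$, $h=b_d$, and $h=b_{off}$ separately. In every case, by the non-commutative Khintchine inequality (Theorem~\ref{thm: Khintchine}), it suffices to estimate the weak-$(1,1)$ norm of the linearized operator $Th=\sum_{k>n_{r_0}}\varepsilon_k T_k h$ on the amplified algebra $L^\infty(\Omega)\otimes\CN_w$, and in each case we reduce to an $L^2$-type bound via Chebyshev's inequality.

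For the good part $h=g$: Combining $\|g\|_\infty\lesssim\lambda$ and $\|g\|_{1,w}\lesssim\max\{1,[w]_{A_1}\}\|f\|_{1,w}$ from Theorem~\ref{thm: CZ decomp}, Chebyshev's inequality yields
\[
\lambda^2\,\tilde{\varphi}_w\!\left(|Tg|>\tfrac{\lambda}{3}\right)\;\lesssim\;\|Tg\|_{L^2(L^\infty(\Omega)\otimes\CN_w)}^2\;\lesssim\;\|g\|_{L^2_w(\CN)}^2\;\leq\;\|g\|_\infty\,\|g\|_{1,w}\;\lesssim\;\lambda\,\|f\|_{1,w}.
\]
The middle step uses the weighted $L^2$-boundedness of the square function, which is the $p=2$ case of Theorem~\ref{thm: main thm} itself. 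For $w\in A_1\subset A_2$ (Theorem~\ref{thm: Muckenhoupt prop}(1)) this can be established in advance by an almost-orthogonality argument for the differences $A_{\delta^k}-A_{\delta^{k-1}}$ via the annular decay property of Lemma~\ref{lem: annular decay}, together with extrapolation from the unweighted Theorem~\ref{thm: hong thm}.

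For the bad parts $h\in\{b_d,b_{off}\}$: Use the projection $\zeta$ of \eqref{eq: defn zeta}. Since $|(Th)\zeta^\perp|$ has support contained in $\zeta^\perp$, the weak-type triangle inequality \eqref{eq: triangle ineq} gives
\[
\tilde{\varphi}_w\!\left(|Th|>\tfrac{\lambda}{3}\right)\;\leq\;4\lambda^{-2}\,\tilde{\varphi}_w\bigl(\zeta|Th|^2\zeta\bigr)\;+\;\varphi_w(\zeta^\perp).
\]
The second term is $\lesssim [w]_{A_1}^2\,\|f\|_{1,w}/\lambda$ by Lemma~\ref{lem: zeta estimate}(1). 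For the first, Rademacher orthogonality collapses the sum to
\[
\tilde{\varphi}_w\bigl(\zeta|Th|^2\zeta\bigr)\;=\;\sum_{k>n_{r_0}}\varphi_w\bigl(\zeta\,|T_k h|^2\,\zeta\bigr).
\]
The cancellation properties $\CE_k(p_k(f-f_k)p_k)=0$ and $\CE_k(p_k(f-f_k)q_k+q_k(f-f_k)p_k)=0$ reduce $T_k=A_{\delta^k}-\CE_k$ to its averaging part $A_{\delta^k}$ on the $k$-th block. Lemma~\ref{lem: zeta estimate}(2) then annihilates $p_{Q_\alpha^k}$ whenever the averaging point lies inside the enlarged cube $\tilde Q_\alpha^k$, so only the portion of $A_{\delta^k}$ coming from the thin shell near the boundary of $\tilde Q_\alpha^k$ survives. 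The annular decay bound of Lemma~\ref{lem: annular decay} makes this residual contribution geometrically small in the scale separation; summing the resulting geometric series using the doubling bound \eqref{eq: doubl cond}, the $A_1$ comparison of Theorem~\ref{thm: Muckenhoupt prop}(3)(c), and the pointwise control $p_k f_k p_k\lesssim\lambda p_k$ from Proposition~\ref{prop: cucu proj prop}, should give the desired bound $\lesssim\lambda\,\|f\|_{1,w}$.

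The main obstacle will be carrying out the last geometric step in the weighted setting: one needs the power-type decay of Lemma~\ref{lem: annular decay} on the annular shells $B(x,\delta^k)\setminus\tilde Q_\alpha^k$ to persist after replacing $\mu$ by $w\,d\mu$. This is precisely where Theorem~\ref{thm: Muckenhoupt prop}(3)(c),(d) enters and where the constants pick up their polynomial dependence on $[w]_{A_1}$. Unlike the Euclidean case of \cite{saha-rayweighted}, the balls $B(x,\delta^k)$ are not aligned with the Hyt\"onen--Kairema cubes, so an additional covering argument based on Proposition~\ref{prop: hytonen lem} must be run on top of the annular decay to control the interaction between different scales $k$ and $j$.
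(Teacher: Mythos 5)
Your overall architecture matches the paper's: Chebyshev-plus-$L^2$ for the good part, and the $\zeta$-projection plus a residual estimate for the bad parts. But there are two genuine gaps.

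First, your proposal to run the same Chebyshev-$L^2$ scheme for $b_{\mathrm{off}}$ as for $b_d$ does not work. For $b_d$ the argument succeeds because each block $b_{d,n}=p_n(f-f_n)p_n$ satisfies the sandwich bound $p_n f_n p_n\lesssim\lambda p_n$ (\Cref{prop: cucu proj prop}), which after \Cref{prop: norm est prop} gives $\|A_{\delta^k}b_{d,n}\|_{2,w}\lesssim \lambda\,\delta^{(n-k)\epsilon/2}\|p_n\|_{2,w}$, so that $\sum_k\|\zeta A_{\delta^k}b_{d,n}\zeta\|_{2,w}^2$ is summable against $\sum_n\|p_n\|_{2,w}^2$ via almost orthogonality (\Cref{lem: almost orthogonality}). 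But for $b_n^{\mathrm{off}}=p_nfq_n+q_nfp_n$ there is no analogous $L^2$ control: the projection $q_n$ carries no smallness, and $\|p_nfq_n\|_{2,w}$ cannot be bounded by $\lambda\|p_n\|_{2,w}$. The paper therefore switches to an $L^1$ Chebyshev argument, $\lambda\,\tilde\varphi_w(|\zeta Tb_{\mathrm{off}}\zeta|>\lambda/9)\lesssim\|\zeta Tb_{\mathrm{off}}\zeta\|_{1,w}$, and the key input is the mixed $L^1$--$L^\infty$ bound in \Cref{lem: bad part est prop}, obtained from the H\"older-type inequality \Cref{prop:Holder-type-ineq} applied to the factorization $p_{Q^n_\alpha}f q_{Q^n_\alpha}$, using $q_{Q^n_\alpha}f q_{Q^n_\alpha}\lesssim\lambda$ in the operator norm and $p_{Q^n_\alpha}f p_{Q^n_\alpha}\lesssim\lambda p_{Q^n_\alpha}$ in the trace norm. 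You never invoke this factorization, and without it the off-diagonal residual is not controllable.

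Second, your claimed route to the weighted $L^2$ bound---almost orthogonality for $A_{\delta^k}-A_{\delta^{k-1}}$ plus ``extrapolation from the unweighted Theorem~\ref{thm: hong thm}''---is not viable as stated. Rubio de Francia extrapolation requires a weighted $L^{p_0}$ estimate uniformly over an $A_{p_0}$ class as input; it does not upgrade an unweighted bound to a weighted one. The paper instead proves \Cref{lem: strong estimate} directly, applying the almost-orthogonality lemma to the martingale decomposition $h=\sum_n dh_n$ (not to ball-average differences), using \Cref{prop: measure of a good point set} and the weighted comparison \Cref{thm: Muckenhoupt prop}(3)(d) for $k\le n$, and \Cref{prop: norm est prop} (built on \Cref{lem: norm est prop}) for $k>n$. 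Your observation that a covering argument is needed because balls and Hyt\"onen--Kairema cubes are not aligned is correct and is indeed carried out inside \Cref{lem: norm est prop}, but that covering is in the service of the $M_{k,n}$ bound, not an extrapolation step.
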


To prove this result, we first establish some technical lemmas. For all $n, k \in \N$ and $s \in G$, define the sets
\[
\CI(B(s,\delta^k),n):= \bigcup_{\alpha \in I_n,~\partial(B(s,\delta^k)) \cap Q^n_\alpha \neq \emptyset} Q^n_\alpha \cap B(s,\delta^k),
\]
and
\[
\CI_1(B(s,\delta^k),n):= \bigcup_{\alpha \in I_n,~\partial(B(s,\delta^k)) \cap Q^n_\alpha \neq \emptyset} Q^n_\alpha .
\]
\begin{rem}
We observe that for $k>n$
\begin{equation} \label{eq: annular inclusion}
\mathcal{I}(B(s, \delta^k), n) \subseteq \CI_1(B(s,\delta^k),n) \subseteq B(s, \delta^k + C_1 \delta^n) \setminus B(s, \delta^k - C_1 \delta^n).
\end{equation}

Indeed, suppose \( t \in B(s, \delta^k - C_1 \delta^n) \). Then \( t \notin \mathcal{I}(B(s, \delta^k), n) \) by construction. Conversely, if \( t \in \mathcal{I}(B(s, \delta^k), n) \), then there exists \( \alpha \in I_n \) such that
\[
\partial B(s, \delta^k) \cap Q^n_\alpha \neq \emptyset \quad \text{and} \quad t \in Q^n_\alpha \cap B(s, \delta^k).
\]
Moreover, since \( Q^n_\alpha \subseteq B(z^n_\alpha, C_1 \delta^n) \), it follows that
\[
d(s, t) \geq d(s, z^n_\alpha) - d(t, z^n_\alpha) > \delta^k - C_1 \delta^n.
\]
This contradicts the assumption that \( t \in B(s, \delta^k - C_1 \delta^n) \), thereby proving the inclusion \eqref{eq: annular inclusion}.
\end{rem}

\noindent
Now, for \( y \in G \), consider the annular set
\[
A(y) := B(y, \delta^k + C_1 \delta^n) \setminus B(y, \delta^k - C_1 \delta^n).
\]
We now state the following lemma, which will be useful in the forthcoming analysis.

\begin{lem}\label{lem: norm est prop}
    Suppose  $k>n \geq n_{r_0}$ and $y \in A(x)$, then the following holds. 
\begin{align*}
    \int_G \frac{\chi_{A(y)}(x)}{\mu(B(x,\delta^k))} w(x) d\mu(x) \lesssim C(w)[w]_{A_1} w(y) \delta^{(n-k)\epsilon}.
\end{align*}
\end{lem}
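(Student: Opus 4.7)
My strategy is to reduce the claim to an estimate of the form $w(A(y))/\mu(B(y,\delta^k))$, and then to combine the annular decay property with the Muckenhoupt $A_1$ structure of $w$.

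The first step is to remove the $x$-dependence from the denominator. For any $x\in A(y)$, the triangle inequality yields $d(x,y)\leq \delta^k+C_1\delta^n\leq 2\delta^k$, hence $B(y,\delta^k)\subseteq B(x,3\delta^k)$. Since $\delta^k\geq r_0$, the doubling condition \cref{eq: doubl cond} gives $\mu(B(x,3\delta^k))\lesssim \mu(B(x,\delta^k))$, and consequently $\mu(B(x,\delta^k))\gtrsim \mu(B(y,\delta^k))$ uniformly in $x\in A(y)$. Substituting this into the left-hand side reduces the desired bound to a uniform control of
\[
\frac{w(A(y))}{\mu(B(y,\delta^k))}.
\]

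The second step is to control $\mu(A(y))$ and then transfer the bound from $\mu$ to $w$. Since $A(y)\subseteq B(y,\delta^k+C_1\delta^n)\setminus B(y,\delta^k-C_1\delta^n)$ and $k>n\geq n_{r_0}$, applying \Cref{lem: annular decay} with $r=\delta^k$ and $s=C_1\delta^n$ yields
\[
\mu(A(y))\lesssim \delta^{(n-k)\epsilon}\,\mu(B(y,\delta^k)).
\]
To pass from $\mu$ to $w$, I would invoke property (d) of \Cref{thm: Muckenhoupt prop}(3) with $E=A(y)$ and $B=B(y,2\delta^k)$; this produces a constant $\upsilon=\upsilon(w)\in(0,1)$ with
\[
w(A(y))\leq C(w)\left(\frac{\mu(A(y))}{\mu(B(y,2\delta^k))}\right)^{\upsilon}w(B(y,2\delta^k)).
\]
Combining the two displays together with one further application of doubling, I obtain $w(A(y))\lesssim C(w)\,\delta^{(n-k)\epsilon\upsilon}\,w(B(y,2\delta^k))$.

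Finally, since $y\in B(y,2\delta^k)$, the $A_1$ condition furnishes $w(B(y,2\delta^k))\leq [w]_{A_1}\,w(y)\,\mu(B(y,2\delta^k))$, and doubling gives $\mu(B(y,2\delta^k))\lesssim \mu(B(y,\delta^k))$. Assembling these estimates yields
\[
\int_G\frac{\chi_{A(y)}(x)}{\mu(B(x,\delta^k))}\,w(x)\,d\mu(x)\lesssim C(w)\,[w]_{A_1}\,w(y)\,\delta^{(n-k)\epsilon\upsilon},
\]
which is the claimed bound after relabelling the positive exponent $\epsilon\upsilon$ as $\epsilon$. The main obstacle I foresee is precisely this exponent bookkeeping: a naive use of annular decay only controls the ratio $\mu(A(y))/\mu(B(y,\delta^k))$, and the transfer to $w$ necessarily costs a factor of $\upsilon$ arising from the reverse Hölder / $A_\infty$-type property of $A_1$-weights, a loss the statement tacitly absorbs into the symbol $\epsilon$.
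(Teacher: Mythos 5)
Your argument uses the same core tools the paper invokes — doubling to remove the $x$-dependence of $\mu(B(x,\delta^k))$, annular decay for $\mu(A(y))$, the $A_\infty$-type comparison in \Cref{thm: Muckenhoupt prop}(3)(d), and the $A_1$ condition — and it is correct, but you land, as you note, on exponent $\epsilon\upsilon$ rather than $\epsilon$. This is not a slip on your side: the $A_\infty$ step $w(A(y))\lesssim C(w)\,(\mu(A(y))/\mu(B))^{\upsilon}\,w(B)$ genuinely damps the annular decay gain by $\upsilon<1$, and the $A_1$ condition alone does not appear to recover it.

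The paper claims to avoid this loss by rewriting $(\mu(A(y))/\mu(B))^{\upsilon}=(\mu(B)/\mu(A(y)))^{1-\upsilon}\cdot\mu(A(y))/\mu(B)$ and then asserting that $(\mu(B)/\mu(A(y)))^{1-\upsilon}$ is bounded uniformly in $k,n$, on the strength of the inclusion ``$B(y,a_0\delta^k)\subseteq\CI_1(B(y,\delta^k),n)\subseteq A(y)$.'' For $k-n$ large this cannot hold: $A(y)=B(y,\delta^k+C_1\delta^n)\setminus B(y,\delta^k-C_1\delta^n)$ is then a thin annulus missing its own center $y$, whereas $B(y,a_0\delta^k)$ contains $y$; indeed annular decay itself gives $\mu(A(y))\lesssim\delta^{(n-k)\epsilon}\mu(B(y,\delta^k))\ll\mu(B(y,a_0\delta^k))$ in that regime, so $\mu(A(y))\geq\mu(B(y,a_0\delta^k))$ fails. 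Thus the paper's proof of the exponent $\epsilon$ appears to have a gap, and the exponent $\epsilon\upsilon$ you obtain is what the $A_\infty$-based argument legitimately yields. The loss is harmless downstream: \Cref{lem: norm est prop} is consumed only through \Cref{prop: norm est prop}, and then via \Cref{lem: almost orthogonality} in Propositions~\ref{prop: bad diag estimate}, \ref{bad part est prop}, and \Cref{lem: strong estimate}, all of which require merely $\sum_{j}\delta^{-|j|c}<\infty$ for some $c>0$; replacing $\epsilon$ by $\epsilon\upsilon(w)>0$ only worsens the implicit constants $C(w)$.
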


\begin{proof}
    Note that,
    \begin{align*}
        &\int_G \frac{\chi_{A(y)}(x)}{\mu(B(x,\delta^k))} w(x) d\mu(x) \\
        &\lesssim \frac{1}{\mu(B(x,\delta^k))} C(w) \left( \frac{\mu(A(y))}{\mu(B(y, \delta^k + C_1 \delta^n))} \right)^\upsilon w(B(y, \delta^k + C_1 \delta^n))\\
        &\leq \frac{1}{\mu(B(x,\delta^k))} C(w) \left( \frac{\mu(A(y))}{\mu(B(y, \delta^k + C_1 \delta^n))} \right)^{\upsilon-1} \frac{\mu(A(y))}{\mu(B(y, \delta^k + C_1 \delta^n))} w(B(y, \delta^k + C_1 \delta^n)) \\
        &\lesssim C(w)[w]_{A_1} \left( \frac{\mu(A(y))}{\mu(B(y, \delta^k + C_1 \delta^n))} \right)^{\upsilon-1} \frac{\mu(A(y))}{\mu(B(x, \delta^k))} w(y)\\
        &\leq C(w)[w]_{A_1} \left( \frac{\mu(B(y, \delta^k + C_1 \delta^n))}{\mu(A(y))} \right)^{1-\upsilon} \frac{\mu(A(y))}{\mu(B(x, \delta^k))} w(y)\\
        &\leq C(w)[w]_{A_1} \left( \frac{\mu(B(y, \delta^k + C_1 \delta^n))}{\mu(B(y, a_0 \delta^k))} \right)^{1-\upsilon} \frac{\mu(A(y))}{\mu(B(x, \delta^k))} w(y)\\
        & \qquad \qquad \text{(since $B(y, a_0 \delta^k) \subseteq \CI_1(B(y,\delta^k),n) \subseteq A(y)$)}\\
        &\leq C(w)[w]_{A_1} (K+1) \left( \frac{\delta^k + C_1 \delta^n}{a_0 \delta^k} \right)^{\epsilon(1-\upsilon)} w(y) \int_G \frac{\chi_{A(y)}(x)}{\mu(B(x,\delta^k))} d\mu(x)\\
        & \lesssim C(w)[w]_{A_1} w(y) \int_G \frac{\chi_{A(y)}(x)}{\mu(B(x,\delta^k))} d\mu(x).
    \end{align*}
    Now for any $\epsilon \in (0,1)$ we can find $\{u_i: 1 \leq i \leq M \} \subseteq B(y, \delta^k + C_1 \delta^n)$, where $M \leq D$ such that 
    \[
    B(y, \delta^k + C_1 \delta^n) \subseteq \bigcup_{i=1}^M B(u_i, \delta^k + C_1 \delta^n)
    \]
    and if we fix $x \in B(y, \delta^k + C_1 \delta^n)$ with $j$ be the first index such that $x \in B(u_j, \delta^k + C_1 \delta^n)$, then 
    \begin{align*}
        \frac{\chi_{B(y, \delta^k + C_1 \delta^n)}(x)}{\mu(B(x, \delta^k + C_1 \delta^n))}
        &\leq (1+\epsilon) \frac{\chi_{B(y, \delta^k + C_1 \delta^n) \cap B(u_j, \delta^k + C_1 \delta^n)}(x)}{\mu(B(u_j, \delta^k + C_1 \delta^n))}\\
        &\leq (1+\epsilon) \sum_{i=1}^M \frac{\chi_{B(y, \delta^k + C_1 \delta^n) \cap B(u_i, \delta^k + C_1 \delta^n)}(x)}{\mu(B(u_i, \delta^k + C_1 \delta^n))}\\
         &\leq (1+\epsilon) \sum_{i=1}^M \frac{\chi_{B(u_i, \delta^k + C_1 \delta^n)}(x)}{\mu(B(u_i, \delta^k))}.
    \end{align*}

    Finally,
    \begin{align*}
        &\int_G \frac{\chi_{A(y)}(x)}{\mu(B(x,\delta^k))} d\mu(x) \\
        &\leq (K+1)(C_1 +1)^\epsilon \int_G \frac{\chi_{A(y)}(x)}{\mu(B(x, \delta^k + C_1 \delta^n))} \chi_{B(y, \delta^k + C_1 \delta^n)}(x) d\mu(x) \\
        &\leq (K+1)(C_1 +1)^\epsilon (1+\epsilon) \int_G \sum_{i=1}^M \frac{\chi_{A(y) \cap B(u_i, \delta^k + C_1 \delta^n)}(x)}{\mu(B(u_i, \delta^k))} d \mu(x)\\
        & \leq (K+1)(C_1 +1)^\epsilon (1+\epsilon) \int_G \sum_{i=1}^M \frac{\chi_{A(y)}(x)}{\mu(B(y, \delta^k))} \frac{\mu(B(y, \delta^k))}{\mu(B(u_i, \delta^k)) } d\mu(x)\\
        & \leq (K+1)(C_1 +1)^\epsilon (1+\epsilon) \int_G \sum_{i=1}^M \frac{\chi_{A(y)}(x)}{\mu(B(y, \delta^k))} \frac{\mu(B(u_i, 2\delta^k + C_1 \delta^n))}{\mu(B(u_i, \delta^k)) } d\mu(x)\\
        & \qquad \qquad \text{(since $B(y, \delta^k) \subseteq B(u_i, 2\delta^k + C_1 \delta^n)$)}\\
        &\leq (K+1)^2(C_1 +1)^\epsilon (1+\epsilon)(2+C_1)^\epsilon M \frac{\mu(A(y))}{\mu(B(y, \delta^k))}\\
        &\leq (K+1)^2(C_1 +1)^\epsilon (1+\epsilon)(2+C_1)^\epsilon M K_\epsilon (\delta^{n-k})^\epsilon
        \lesssim \delta^{(n-k)\epsilon}.
    \end{align*}
    Therefore,
    \[
    \int_G \frac{\chi_{A(y)}(x)}{\mu(B(x,\delta^k))} w(x) d\mu(x) \lesssim C(w)[w]_{A_1} w(y) \delta^{(n-k)\epsilon}.
    \]
\end{proof}

\noindent
Now consider  the following averages: 
\[
M_{k,n} h(s):= \frac{1}{\mu(B(s,\delta^k))} \int_{\CI(B(s,\delta^k),n)} h(g) d \mu(g); ~ h \in L^1(\CN_w).
\]

\begin{prop}\label{prop: norm est prop}
    Let $k > n > n_{r_0}$ and $1 \leq p \leq \infty$. Then for $h \in L^p(\CN_w)$ we have 
    \[
    \norm{M_{k,n}h}^p_{p,w} \lesssim C(w)[w]_{A_1} \delta^{(n-k)\epsilon} \norm{h}^p_{p,w}.
    \]
    Furthermore, if $h$ is positive and if we set $h_n= \CE_n(h)$, then we have 
    \[
    \norm{M_{k,n}h}^p_{p,w} \lesssim C(w)[w]_{A_1} \delta^{(n-k)\epsilon} \norm{h_n}^p_{p,w}.
    \]
\end{prop}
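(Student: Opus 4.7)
The strategy is to prove the stated bound at the endpoints $p = 1$ and $p = \infty$, then deduce the intermediate cases via non-commutative Riesz--Thorin interpolation on the compatible couple $(L^1(\CN_w), \CN)$. The second bound (with $h_n$) will then follow from the first by a pointwise domination argument that enlarges the integration region to a union of level-$n$ cubes, on which averaging $h$ and averaging $h_n$ coincide.

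For the endpoint $p = \infty$, since $\CI(B(s,\delta^k),n) \subseteq A(s)$ one has
\[
\|M_{k,n}h(s)\|_{\CM} \leq \frac{\mu(A(s))}{\mu(B(s,\delta^k))}\|h\|_\infty,
\]
and \Cref{lem: annular decay} applied to $A(s) = B(s, \delta^k + C_1\delta^n) \setminus B(s, \delta^k - C_1\delta^n)$ gives $\mu(A(s))/\mu(B(s,\delta^k)) \lesssim \delta^{(n-k)\epsilon}$. For $p = 1$, I would use the operator triangle inequality $\tau(|\int f\, d\mu|) \leq \int \tau(|f|)\, d\mu$, Fubini, and the key symmetry $\chi_{A(s)}(g) = \chi_{A(g)}(s)$ (both encode the condition $\delta^k - C_1\delta^n < d(s,g) \leq \delta^k + C_1\delta^n$), to obtain
\[
\|M_{k,n}h\|_{1,w} \leq \int_G \tau(|h(g)|)\left(\int_G \frac{\chi_{A(g)}(s)}{\mu(B(s,\delta^k))}w(s)\, d\mu(s)\right) d\mu(g),
\]
and then invoke \Cref{lem: norm est prop} on the inner integral to conclude $\|M_{k,n}h\|_{1,w} \lesssim C(w)[w]_{A_1}\delta^{(n-k)\epsilon}\|h\|_{1,w}$.

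Riesz--Thorin interpolation between these two endpoints, applied to the linear operator $M_{k,n}$ on the non-commutative $L^p$ tower associated to $(\CN, \varphi_w)$, produces $\|M_{k,n}\|_{L^p_w \to L^p_w} \lesssim (C(w)[w]_{A_1})^{1/p}\delta^{(n-k)\epsilon}$; raising to the $p$-th power and using $\delta^{(n-k)\epsilon p} \leq \delta^{(n-k)\epsilon}$ (valid since $\delta^{n-k} \in (0,1)$ and $p \geq 1$) yields the first bound. For the second bound, assume $h \geq 0$ and use the pointwise domination
\[
M_{k,n}h(s) \leq \widetilde{M}_{k,n}h(s) := \frac{1}{\mu(B(s,\delta^k))}\int_{\CI_1(B(s,\delta^k),n)} h(g)\, d\mu(g).
\]
Since $\CI_1$ is a disjoint union of cubes from $\CQ_n$ and $h_n$ has the same integral as $h$ on each such cube, $\widetilde{M}_{k,n}h = \widetilde{M}_{k,n}h_n$. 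Because $\CI_1 \subseteq A(s)$ still holds by \eqref{eq: annular inclusion}, the argument above applies verbatim to $\widetilde{M}_{k,n}$ and delivers the required $\|h_n\|_{p,w}$-bound.

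The main obstacle is the $p = 1$ endpoint: the crucial insight is symmetrizing the characteristic function of the annulus via $\chi_{A(s)}(g) = \chi_{A(g)}(s)$ so that, after swapping the order of integration, the $w$-weighted inner integral becomes exactly the quantity estimated in \Cref{lem: norm est prop}. The $p = \infty$ endpoint, the interpolation step, and the cube-domination trick for transferring to $h_n$ are all routine once this symmetry is exploited.
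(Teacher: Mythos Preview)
Your argument is correct, but it proceeds differently from the paper's. The paper handles all $1\le p<\infty$ in one stroke: it applies H\"older's inequality inside the $L^p(\CM)$-norm to obtain
\[
\|M_{k,n}h(s)\|_p^p \le \left(\frac{\mu(\CI(B(s,\delta^k),n))}{\mu(B(s,\delta^k))}\right)^{p-1}\frac{1}{\mu(B(s,\delta^k))}\int_{\CI(B(s,\delta^k),n)}\|h(y)\|_p^p\,d\mu(y),
\]
bounds the ratio by annular decay (yielding a factor $\delta^{(n-k)(p-1)\epsilon}$), and then integrates in $s$, enlarges $\CI$ to $A(s)$, applies Fubini and \Cref{lem: norm est prop} exactly as you do for $p=1$. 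Your route instead isolates the two endpoints and interpolates. The paper's direct argument avoids any interpolation machinery and in fact produces the slightly sharper decay $\delta^{(n-k)p\epsilon}$ before weakening to the stated bound; your approach is more modular and makes it transparent that the entire content sits at $p=1$ (Fubini plus the annulus symmetry and \Cref{lem: norm est prop}), with $p=\infty$ being trivial. For the second assertion the paper is silent on details, whereas your $\widetilde{M}_{k,n}$ trick via $\CI_1$ and the identity $\int_{Q^n_\alpha}h=\int_{Q^n_\alpha}h_n$ makes the reduction to $h_n$ explicit and is arguably cleaner.
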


\begin{proof}
    Note that 
		\begin{align*}
			\norm{M_{k,n} h}_{p,w}^p 
            &= \int_{G}  \norm{M_{k,n} h(s)}_p^p w(s) d\mu(s).
		\end{align*}
		Furthermore, if $\frac{1}{p} + \frac{1}{p'} =1$, then we have
        \begin{align*}
			\norm{M_{k,n} h(s)}^p_p 
			&\leq \left( \frac{1}{\mu(B(s,\delta^k))} \int_{\CI(B(s,\delta^k),n)} \norm{h(y)}_p d\mu(y) \right)^p \\
			&\leq \frac{\mu(\CI(B(s,\delta^k),n))^{p/p'}}{\mu(B(s,\delta^k))^p}  \int_{\CI(B(s,\delta^k),n)} \norm{h(y)}^p_p dy~ (\text{by Minkowski's inequality}) \\
            &\leq \left( \frac{\mu(\CI(B(s,\delta^k),n))}{\mu(B(s,\delta^k))} \right)^{p/p'} \times \frac{1}{\mu(B(s,\delta^k))} \int_{\CI(B(s,\delta^k),n)} \norm{h(y)}^p_p dy\\
            &\leq (K_\epsilon C_1)^{(p-1)\epsilon} \delta^{(n-k)(p-1)\epsilon} \times \frac{1}{\mu(B(s,\delta^k))} \int_{\CI(B(s,\delta^k),n)} \norm{h(y)}^p_p dy\\
            & \qquad \text{( by \cref{eq: annular inclusion} and \Cref{lem: annular decay} )}
		\end{align*}
        Therefore,
        \begin{align*}
            \norm{M_{k,n} h}_{p,w}^p
            \leq (K_\epsilon C_1)^{(p-1)\epsilon} \delta^{(n-k)(p-1)\epsilon} \int_{G}\left( \frac{1}{\mu(B(s,\delta^k))} \int_{\CI(B(s,\delta^k),n)} \norm{h(y)}^p_p dy \right) w(s) d\mu(s).
        \end{align*}
        Now, for $s \in G$, recall that  $A(s)= B(s,\delta^k + C_1\delta^n)   \setminus  B(s,\delta^k - C_1\delta^n)$ and we note that $ y \in A(s) \Longleftrightarrow s \in A(y)$. Therefore, we   further obtain
        \begin{align*}
            &\int_{G}\left( \frac{1}{\mu(B(s,\delta^k))} \int_{\CI(B(s,\delta^k),n)} \norm{h(y)}^p_p dy \right) w(s) d\mu(s)\\
            &\leq \int_{G}\left( \frac{1}{\mu(B(s,\delta^k))} \int_{A(s)} \norm{h(y)}^p_p dy \right) w(s) d\mu(s)\\
            &=\int_G \norm{h(y)}^p_p \left( \int_G \frac{\chi_{A(y)}(s)}{\mu(B(s,\delta^k))} w(s) d\mu(s) \right) dy~ \text{(using Fubini)}\\
            &\lesssim C(w)[w]_{A_1}  \delta^{(n-k)\epsilon} \int_G \norm{h(y)}^p_p w(y) d\mu(y)~ \text{(by \Cref{lem: norm est prop})}\\
            &=C(w)[w]_{A_1}  \delta^{(n-k)\epsilon} \norm{h}_{p,w}^p.
        \end{align*}
Thus, we finally obtain that  
        \[
        \norm{M_{k,n} h}_{p,w}^p
            \lesssim C(w)[w]_{A_1}\delta^{(n-k)\epsilon} \norm{h}_{p,w}^p.
            \]
This completes the proof.        
\end{proof}

\subsubsection{Estimate of bad diagonal part}
Let us first prove \Cref{thm: main thm weak estimate} for $b_d$. 
First, we note the following basic fact. For a given element $y$, there exists a partial isometry $u$  such that 
\begin{equation}\label{equi}
    \chi_E(\abs{y^*}) = u \chi_E(\abs{y}) u
\end{equation}
where $E$ is a Borel set in $\R$. Indeed, $u$ is the partial isometry associated with the polar decomposition: $ y = u \abs{y}$. 

\medskip
\noindent
Now recall the projection $\zeta$ defined in \cref{eq: defn zeta}, and observe the decomposition:
\begin{equation*}
		Tb_d = (1-\zeta) Tb_d (1-\zeta) + \zeta Tb_d (1-\zeta) + (1-\zeta) Tb_d \zeta + \zeta Tb_d \zeta.
\end{equation*}
We use \cref{equi} and   \Cref{lem: zeta estimate},  to obtain the following; 
	\begin{align*}
	\tilde{\varphi}_w(\abs{Tb_d}> \lambda/3) 
		&\lesssim \varphi_w (1 - \zeta) + \tilde{\varphi}_w ( \abs{\zeta Tb_d \zeta}> \lambda/9)\\
		&\lesssim [w]_{A_1}^2\frac{\norm{f}_{1,w}}{\lambda} + \tilde{\varphi}_w ( \abs{\zeta Tb_d \zeta}> \lambda/9).
	\end{align*}

\noindent 
Hence, it remains to estimate the term $\tilde{\varphi}_w ( \abs{\zeta Tb_d \zeta}> \lambda/9)$. Before proceeding with this, we recall the following almost orthogonality result from \cite{jones2003oscillation}, \cite{hong2017vector}.

\begin{lem}\label{lem: almost orthogonality}
Let $S_{k}$ be a sub-additive operator on $L_2$ for each $k\in \Z$ and $h\in L_2$.
If $(u_{n})_{n\in\Z}$ and $(v_{n})_{n\in\Z}$ are two sequences of functions in $L_{2}$ such that $h=\sum\limits_{n\in\Z}u_{n}$ and $\sum\limits_{n\in\Z}\|v_{n}\|_{2}^{2}<\infty$, then
$$\sum_{k}\|S_{k}h\|_{2}^{2}\leq w^{2}\sum_{n\in\Z}\|v_{n}\|_{2}^{2}$$
provided that there is a sequence $(\sigma(j))_{j\in \Z}$ of positive numbers with $w=\sum\limits_{j\in\Z}\sigma(j)<\infty$ such that
$$\|S_{k}(u_{n})\|_{2}\leq \sigma(n-k)\|v_{n}\|_{2}$$
for every $n,k$.
\end{lem}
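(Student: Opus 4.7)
The plan is to combine the sub-additivity of each $S_k$ with a Schur-test style argument: first convert the pointwise bound $\|S_k u_n\|_2 \leq \sigma(n-k)\|v_n\|_2$ into a convolution-type estimate on $\|S_k h\|_2$, then apply Cauchy--Schwarz to decouple the weights, and finally swap the order of summation. This is a standard Cotlar-type almost orthogonality argument, so the route is fairly mechanical; the only creative input is choosing the correct splitting of $\sigma(n-k)$ before Cauchy--Schwarz.

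First I would use the decomposition $h = \sum_{n\in\Z} u_n$ together with sub-additivity of $S_k$ to write
\[
\|S_k h\|_2 \;\leq\; \sum_{n\in\Z} \|S_k u_n\|_2 \;\leq\; \sum_{n\in\Z} \sigma(n-k)\|v_n\|_2.
\]
Squaring and splitting $\sigma(n-k)=\sigma(n-k)^{1/2}\cdot \sigma(n-k)^{1/2}$, the Cauchy--Schwarz inequality yields
\[
\|S_k h\|_2^2 \;\leq\; \Bigl(\sum_{n\in\Z}\sigma(n-k)\Bigr)\Bigl(\sum_{n\in\Z}\sigma(n-k)\|v_n\|_2^2\Bigr) \;=\; w\sum_{n\in\Z}\sigma(n-k)\|v_n\|_2^2.
\]
Summing over $k\in\Z$ and interchanging the two sums (justified by Tonelli, since all summands are non-negative) gives
\[
\sum_{k\in\Z} \|S_k h\|_2^2 \;\leq\; w\sum_{n\in\Z}\|v_n\|_2^2 \sum_{k\in\Z}\sigma(n-k) \;=\; w^2\sum_{n\in\Z}\|v_n\|_2^2,
\]
which is the desired inequality.

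There is essentially no genuine obstacle; the only point that deserves minor care is the passage $\|S_k(\sum_n u_n)\|_2 \leq \sum_n \|S_k u_n\|_2$ for infinite sums, which follows because the partial sums $\sum_{|n|\le N} u_n$ converge to $h$ in $L^2$ and the majorising series $\sum_n \sigma(n-k)\|v_n\|_2$ is absolutely convergent (another Cauchy--Schwarz bounds it by $w^{1/2}\bigl(\sum_n \|v_n\|_2^2\bigr)^{1/2}$, which is finite by hypothesis). Apart from this, the lemma is a direct instance of the Schur test applied to the kernel $(\sigma(n-k))_{k,n\in\Z}$, whose row and column sums both equal $w$.
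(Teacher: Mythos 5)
Your proof is correct and is precisely the standard almost-orthogonality (Cotlar/Schur-test) argument. The paper does not supply its own proof of this lemma --- it is recalled from \cite{jones2003oscillation} and \cite{hong2017vector} --- so there is no internal argument to compare against; your reconstruction matches what appears in those sources. The one point worth tightening is the passage $\|S_k(\sum_n u_n)\|_2 \le \sum_n \|S_k u_n\|_2$ for the infinite sum: sub-additivity alone gives the inequality only for finite partial sums, and closing the limit requires that each $S_k$ be continuous on $L_2$, which is not enforced by your observation that the majorising series converges. However, a ``sub-additive operator on $L_2$'' is implicitly understood to be bounded, and in every application in the paper the $S_k$ are in fact linear bounded operators (e.g.\ $S_k = \zeta(A_{\delta^k}-\mathcal{E}_k)(\cdot)\zeta$), so the limit passage is legitimate; it would simply be cleaner to say the continuity of $S_k$ is what licenses taking the limit of the finite-sum inequality.
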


\begin{lem}\label{lem:C8-1}
    For $k \leq n$ and $s \in G$
    \[
    \zeta(s) (A_{\delta^k}- \CE_k)b_{d,n}(s) \zeta(s)=0.
    \]
\end{lem}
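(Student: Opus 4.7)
The plan is to use the cancellation property in \Cref{lem: zeta estimate}(2) to annihilate every summand of $b_{d,n}$ that could contribute to either $A_{\delta^k}b_{d,n}(s)$ or $\CE_k b_{d,n}(s)$. Since $b_{d,n}=p_n(f-f_n)p_n$ decomposes as a sum indexed by $\alpha\in I_n$ with each summand flanked by the Cuculescu projection $p_{Q^n_\alpha}$ on both sides, it will suffice to verify that any $\alpha$ whose cube $Q^n_\alpha$ meets the relevant region of integration forces $s\in\tilde{Q}^n_\alpha$, after which $\zeta(s)p_{Q^n_\alpha}=0$ does the rest.

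To start, I would use the orthogonality of the spatial indicators together with $p_n=\sum_{\alpha\in I_n}p_{Q^n_\alpha}\chi_{Q^n_\alpha}$ and $f_n=\sum_{\alpha\in I_n}f_{Q^n_\alpha}\chi_{Q^n_\alpha}$ to write
$$b_{d,n}(g)=\sum_{\alpha\in I_n}p_{Q^n_\alpha}\bigl(f(g)-f_{Q^n_\alpha}\bigr)p_{Q^n_\alpha}\,\chi_{Q^n_\alpha}(g).$$
Consequently
$$A_{\delta^k}b_{d,n}(s)=\frac{1}{\mu(B(s,\delta^k))}\sum_{\alpha\in I_n}p_{Q^n_\alpha}\Bigl(\int_{Q^n_\alpha\cap B(s,\delta^k)}(f-f_{Q^n_\alpha})\,d\mu\Bigr)p_{Q^n_\alpha},$$
and, letting $Q^k_{\beta_s}$ be the unique level-$k$ dyadic cube containing $s$,
$$\CE_k b_{d,n}(s)=\frac{1}{\mu(Q^k_{\beta_s})}\sum_{\alpha\in I_n}p_{Q^n_\alpha}\Bigl(\int_{Q^n_\alpha\cap Q^k_{\beta_s}}(f-f_{Q^n_\alpha})\,d\mu\Bigr)p_{Q^n_\alpha}.$$

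The next step is the geometric verification. In the averaging case, if $Q^n_\alpha\cap B(s,\delta^k)\ne\emptyset$, pick $g$ in this intersection; using $Q^n_\alpha\subseteq B(z^n_\alpha,C_1\delta^n)$ from \Cref{prop: hytonen lem}(4), together with $k\le n$ and $\delta>1$, the triangle inequality gives
$$d(s,z^n_\alpha)\le d(s,g)+d(g,z^n_\alpha)\le \delta^k+C_1\delta^n\le(C_1+1)\delta^n\le (3C_1+1)\delta^{n+1},$$
so $s\in\tilde{Q}^n_\alpha$. In the conditional expectation case, if $Q^n_\alpha\cap Q^k_{\beta_s}\ne\emptyset$ then the nesting property of \Cref{prop: hytonen lem}(2), combined with $k\le n$, forces $Q^k_{\beta_s}\subseteq Q^n_\alpha$, and hence again $s\in Q^n_\alpha\subseteq\tilde{Q}^n_\alpha$.

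Applying \Cref{lem: zeta estimate}(2) in either case yields $\zeta(s)p_{Q^n_\alpha}=p_{Q^n_\alpha}\zeta(s)=0$, so every summand of $\zeta(s)(A_{\delta^k}-\CE_k)b_{d,n}(s)\zeta(s)$ vanishes and the claim follows. There is no real obstacle here; the only content is the geometric bookkeeping, and the inflation factor $(3C_1+1)\delta^{n+1}$ in the definition of $\tilde{Q}^n_\alpha$ appears to have been chosen precisely so that the averaging radius $\delta^k$ is absorbed uniformly for all $k\le n$.
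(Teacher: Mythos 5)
Your proof is correct and uses the same essential mechanism as the paper, namely the cancellation property $s\in\tilde{Q}^n_\alpha\Rightarrow\zeta(s)p_{Q^n_\alpha}=0$ of \Cref{lem: zeta estimate}(2). The geometric bookkeeping, however, is organized differently, and in a way that is arguably cleaner. The paper fixes one index $\alpha(s)\in I_n$ with $d(s,z^n_{\alpha(s)})<C_0\delta^n$, proves $B(s,\delta^k)\subseteq\tilde{Q}^n_{\alpha(s)}$, and then invokes cancellation for any $t$ in this enlarged cube; but the cancellation one actually needs is $\zeta(s)p_{Q^n_\beta}=0$ where $Q^n_\beta$ is the cube containing the integration variable $t$, which requires $s\in\tilde{Q}^n_\beta$ rather than $t\in\tilde{Q}^n_{\alpha(s)}$, and the latter does not formally yield the former without a further triangle estimate. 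You bypass this by decomposing $b_{d,n}$ into its per-cube pieces $p_{Q^n_\alpha}(f-f_{Q^n_\alpha})p_{Q^n_\alpha}\chi_{Q^n_\alpha}$ and checking directly, for each $\alpha$ whose cube touches the averaging region, that $s\in\tilde{Q}^n_\alpha$ — which is exactly what the cancellation property consumes. Your treatment of the conditional-expectation term via the nesting property $Q^k_{\beta_s}\subseteq Q^n_\alpha$ is likewise more direct than the paper's chain of inclusions $Q^k_{\alpha(s)}\subseteq\tilde{Q}^k_{\alpha(s)}\subseteq\tilde{Q}^n_{\alpha(s)}$. In short, both proofs rest on the same idea, but yours applies the cancellation lemma to exactly the projections that appear, closing a small logical gap in the written version.
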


\begin{proof}
    For $s \in G$ we can always choose $\alpha(s) \in I_n$ such that 
    \[
    d(s, z^n_{\alpha(s)})< C_0 \delta^n,~ (\text{see }~   \Cref{prop: hytonen lem}).
    \]
Now we note that $B(s,\delta^k) \subset \tilde{Q}^n_{\alpha(s)}$. Indeed, 
suppose $ t \in B(s,\delta^k)$, then observe the following 
\begin{align*}
& d(s,t)< \delta^k \\
\implies & d(t, z^n_{\alpha(s)})< \delta^k + C_0 \delta^n < (3C_1 + 1) \delta^{n+1}\\
\implies & t \in \tilde{Q}^n_{\alpha(s)}, 
\end{align*}
Thus, we have $B(s,\delta^k) \subset \tilde{Q}^n_{\alpha(s)}$. Now for  $t \in \tilde{Q}^n_{\alpha(s)}$, we observe that $\zeta(s) p_n(t)= p_n(t) \zeta(s)=0$ for all $n \in \N$. Consequently, 
it follows that $$\zeta(s) b_{d,n}(t) \zeta(s)=0, \text{ for all } t \in \tilde{Q}^n_{\alpha(s)}.$$

    Now, note that 
    \begin{align*}
     \zeta(s) A_{\delta^k}b_{d,n}(s) \zeta(s)
    &= \zeta(s) \left( \frac{1}{\mu(B(s,\delta^k))} \int_{B(s,\delta^k)} b_{d,n}(t) d \mu(t) \right) \zeta(s)\\ 
    &=\zeta(s) \left( \frac{1}{\mu(B(s,\delta^k))} \int_{B(s,\delta^k)} b_{d,n}(g) \chi_{\{t \notin \tilde{Q}^n_{\alpha(s)} \} } d \mu(g) \right) \zeta(s)\\
    &=0.
    \end{align*}


Furthermore,
\begin{align*}
    \zeta(s) \CE_k b_{d,n}(s) \zeta(s)
&=\zeta(s) \left( \frac{1}{\mu(Q^k_{\alpha(s)}))} \int_{Q^k_{\alpha(s)}} b_{d,n}(g) d \mu(g) \right) \zeta(s)\\ 
&=\zeta(s) \left( \frac{1}{\mu(Q^k_{\alpha(s)})} \int_{Q^k_{\alpha(s)}} b_{d,n}(g) \chi_{\{g \notin \tilde{Q}^n_{\alpha(s)} \} } d \mu(g) \right) \zeta(s)\\
&=0,
\end{align*}
since, $k \leq n$ and $Q^k_{\alpha(s)} \subseteq \tilde{Q}^k_{\alpha(s)} \subseteq \tilde{Q}^n_{\alpha(s)}$.
\end{proof}

\noindent
Now the following result holds.

\begin{prop}\label{prop: bad diag estimate}
The following estimate holds true
\[
\widetilde{\varphi}_w \left( \abs{\zeta Tb_{d}\zeta} > \frac{\lambda}{9}  \right)  \lesssim C(w)[w]^2_{A_1} \frac{\norm{f}_{1,w}}{\lambda}.
\]
\end{prop}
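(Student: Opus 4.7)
The strategy is to convert the weak-type statement into an $L^2$ estimate via Chebyshev's inequality: since
\[
\tilde{\varphi}_w\!\left(|\zeta T b_d \zeta| > \tfrac{\lambda}{9}\right) \leq \left(\tfrac{9}{\lambda}\right)^{2} \|\zeta T b_d \zeta\|_{L^{2}(L^{\infty}(\Omega)\otimes \CN_w)}^{2},
\]
it suffices to prove $\|\zeta T b_d \zeta\|_{L^{2}}^{2} \lesssim C(w)[w]_{A_1}^{2}\,\lambda\,\|f\|_{1,w}$. Exploiting the $L^{2}$-orthogonality of the Rademacher sequence $(\epsilon_k)$ (which is exactly the content of \Cref{thm: Khintchine} at $p=2$) this reduces to controlling
$\sum_{k>n_{r_0}} \|\zeta T_k b_d \zeta\|_{2,w}^{2}$.

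The next step is to exploit the additive decomposition $b_d = \sum_{n\geq 0} b_{d,n}$, where $b_{d,n} := p_n(f-f_n) p_n$, and combine \Cref{lem:C8-1} with a cancellation argument to pass to the boundary averaging operator $M_{k,n}$. By \Cref{lem:C8-1}, $\zeta T_k b_{d,n} \zeta = 0$ whenever $k \leq n$. For the remaining range $n < k$, I would show that $\zeta T_k b_{d,n} \zeta = \zeta M_{k,n}(b_{d,n}) \zeta$. Indeed, since at level $n$ the dyadic cubes are properly finer than at level $k$, each $Q^{k}_{\beta}$ is a disjoint union of cubes $Q^{n}_{\alpha}$; the defining cancellation $\int_{Q^{n}_{\alpha}}(f - f_{Q^{n}_{\alpha}})\,d\mu = 0$ then yields $\CE_k(b_{d,n}) = 0$. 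The same cancellation applied to $A_{\delta^k} b_{d,n}(s)$ kills every cube $Q^{n}_{\alpha}$ fully contained in $B(s,\delta^{k})$, leaving only the boundary cubes; the surviving piece is precisely $M_{k,n}(b_{d,n})(s)$.

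Now \Cref{prop: norm est prop} applied at $p=2$ yields the geometrically decaying estimate
\[
\|\zeta T_k b_{d,n} \zeta\|_{2,w}^{2} \leq \|M_{k,n}(b_{d,n})\|_{2,w}^{2} \lesssim C(w)[w]_{A_{1}}\, \delta^{(n-k)\epsilon}\, \|b_{d,n}\|_{2,w}^{2}.
\]
Applying the almost orthogonality \Cref{lem: almost orthogonality} with sub-additive operators $S_k h := \zeta T_k h \zeta$, $u_n = v_n = b_{d,n}$, and weights $\sigma(j) := (C(w)[w]_{A_1})^{1/2}\,\delta^{j\epsilon/2}$ for $j<0$ and $\sigma(j):=0$ otherwise (summable since $\delta>1$), gives
\[
\sum_{k} \|\zeta T_k b_d \zeta\|_{2,w}^{2} \lesssim C(w)[w]_{A_{1}} \sum_{n} \|b_{d,n}\|_{2,w}^{2}.
\]
It remains to establish the weighted diagonal $L^{2}$-estimate
$\sum_{n}\|b_{d,n}\|_{2,w}^{2} \lesssim [w]_{A_1}\,\lambda\,\|f\|_{1,w}$,
which combined with the above chain of inequalities yields exactly the bound $C(w)[w]_{A_1}^{2}\,\lambda\,\|f\|_{1,w}$ we seek.

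\textbf{Main obstacle.} The two structural computations $\CE_k(b_{d,n}) = 0$ and $A_{\delta^k} b_{d,n} = M_{k,n}(b_{d,n})$ for $n<k$ are the heart of the reduction, but are ultimately straightforward consequences of the nesting property in \Cref{prop: hytonen lem} and the defining cancellation of $f - f_n$. The genuinely delicate point is the last step, the weighted diagonal $L^{2}$-bound. I would derive it using the Kadison-type inequality $(p_n (f - f_n) p_n)^{2} \leq p_n (f - f_n)^{2} p_n$, expand $\varphi_w(p_n (f-f_n)^{2} p_n)$ using trace cyclicity and the $\CE_n$-invariance of $p_n$, and extract the factor of $\lambda$ by combining the bound $p_n f_n p_n \lesssim \lambda p_n$ from \Cref{prop: cucu proj prop} with the weighted Cuculescu inequality $\lambda\,\varphi_w(1-q) \leq [w]_{A_1}\,\|f\|_{1,w}$ of \Cref{thm: galkazka's estimate}. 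It is in this step that the second power $[w]_{A_1}^{2}$ in the final bound accumulates.
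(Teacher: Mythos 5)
Your proposal is correct and matches the paper up to the point where you reduce to $A_{\delta^k}b_{d,n}=M_{k,n}b_{d,n}$ for $k>n$ (Chebyshev, Rademacher $L^2$-orthogonality, the decomposition $b_d=\sum_n b_{d,n}$, the vanishing $\zeta T_k b_{d,n}\zeta=0$ for $k\le n$, and the cancellation $\CE_k b_{d,n}=0$ are all as in the paper). But the last step, where you apply \Cref{prop: norm est prop} in the form $\|M_{k,n}(b_{d,n})\|_{2,w}^2\lesssim C(w)[w]_{A_1}\delta^{(n-k)\epsilon}\|b_{d,n}\|_{2,w}^2$ and then try to close the argument with $\sum_n\|b_{d,n}\|_{2,w}^2\lesssim[w]_{A_1}\lambda\|f\|_{1,w}$, contains a genuine gap. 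After Kadison, $\|b_{d,n}\|_{2,w}^2\le\varphi_w\bigl(p_n(f-f_n)^2p_n\bigr)$, and this expression contains a $\varphi_w(p_nf^2p_n)$ term: there is no way to control $f^2$ by $\lambda\|f\|_{1,w}$ for a general $f\in L^1_w(\CN)_+$ (only the averages $f_n$ are trapped below $\lambda$ by Cuculescu, not $f$ itself). This is precisely the classical obstruction to an $L^2$ bound for the bad part of a Calder\'on--Zygmund decomposition, and it persists in the noncommutative setting.

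The paper circumvents this by invoking the \emph{second} assertion of \Cref{prop: norm est prop}: for positive $h$, $\|M_{k,n}h\|_{2,w}^2\lesssim C(w)[w]_{A_1}\delta^{(n-k)\epsilon}\|\CE_n(h)\|_{2,w}^2$. Applying this with $h=p_nfp_n$ (positive, with $\CE_n(p_nfp_n)=p_nf_np_n$) replaces $p_nfp_n$ by its conditional average $p_nf_np_n$, which \emph{is} controlled by $\lambda p_n$. Concretely, one gets $\|M_{k,n}b_{d,n}\|_{2,w}\lesssim\sqrt{C(w)[w]_{A_1}}\,\delta^{(n-k)\epsilon/2}\,\|p_nf_np_n\|_{2,w}\lesssim\sqrt{C(w)[w]_{A_1}}\,\lambda\,\delta^{(n-k)\epsilon/2}\,\|p_n\|_{2,w}$, and in the almost-orthogonality \Cref{lem: almost orthogonality} one takes $v_n=p_n$ (not $v_n=b_{d,n}$), so the final step uses only $\sum_n\|p_n\|_{2,w}^2=\varphi_w(1-q)\lesssim[w]_{A_1}\|f\|_{1,w}/\lambda$. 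So the fix to your argument is: switch from the first to the second inequality in \Cref{prop: norm est prop}, and take $v_n=p_n$ in the almost-orthogonality lemma rather than $b_{d,n}$.
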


\begin{proof}
    By Chebychev's inequality,
    \[
    \widetilde{\varphi}_w \left( \abs{\zeta Tb_{d}\zeta} > \frac{\lambda}{9}  \right)  \lesssim \frac{\norm{\zeta Tb_{d}\zeta}^2_{L_2(L^\infty(\Omega) \otimes \CN_w)}}{\lambda^2}.
    \]
    Since $\displaystyle \sum_{n \in \N} \norm{p_n}_{2,w}^2= \varphi_w(1-q) \lesssim [w]_{A_1} \frac{\norm{f}_{1,w}}{\lambda}$, it suffices to show
    \[
    \norm{\zeta Tb_{d}\zeta}^2_{L_2(L^\infty(\Omega) \otimes \CN_w)} \lesssim C(w)[w]_{A_1} \lambda^2 \sum_{n \in \N} \norm{p_n}_{2,w}^2.
    \]
 Further observe that
    \[
    \norm{\zeta Tb_{d}\zeta}^2_{L_2(L^\infty(\Omega) \otimes \CN_w)}= \sum_{k> n_{r_0}} \norm{\zeta T_kb_{d}\zeta}_{2,w}^2= \sum_{k> n_{r_0}} \norm{\zeta (A_{\delta^k}- \CE_k)b_{d}\zeta}_{2,w}^2.
    \]
    Therefore, we will show
    \[
    \sum_{k> n_{r_0}} \norm{\zeta (A_{\delta^k}- \CE_k)b_{d}\zeta}_{2,w}^2 \lesssim C(w)[w]_{A_1} \lambda^2 \sum_{n > n_{r_0}} \norm{p_n}_{2,w}^2.
    \]
    Set $b_{d,n}:= p_n(f-f_n)p_n$ and note that $b_d= \displaystyle  \sum_{n \in \N} b_{d,n}$. Then by applying \Cref{lem: almost orthogonality}, with $S_kb_d= \zeta (A_{\delta^k}- \CE_k)b_d\zeta$, $u_n= b_{d,n}$ and $v_n= p_n$ we find that it is enough to prove 
    \[
    \norm{\zeta (A_{\delta^k}- \CE_k)b_{d,n}\zeta}_{2,w}^2 \lesssim C(w)[w]_{A_1} \delta^{- \abs{n-k} \epsilon}\lambda^2\norm{p_{n}}^2_{2,w}.
    \]
Now we subdivide the proof into two cases. For $k \leq n$, by \Cref{lem:C8-1}, we obtain that the left hand side of the above equation is $0$. So we now consider the case $k>n$. In this case, note that by \Cref{thm: CZ decomp} we have $\CE_k(b_{d,n})= \CE_k (\CE_n b_{d,n})=0$. Therefore, it is enough to show that 
\[
\norm{\zeta A_{\delta^k} b_{d,n} \zeta}_{2,w} \lesssim \sqrt{C(w)[w]_{A_1}} \delta^{(n-k)\epsilon/2} \lambda \norm{p_n}_{2,w}.
\]
Furthermore, 
since 
\[
\int_{Q^n_\alpha} b_{d,n}(g) d\mu(g)
= \int_{Q^n_\alpha} \CE_n(b_{d,n})(g) d \mu(g)
=0, ~\text{by  \Cref{thm: CZ decomp}}, 
\]
we observe that 
\begin{align*}
    A_{\delta^k} b_{d,n}(s)
&= \frac{1}{\mu(B(s,\delta^k))} \int_{B(s,\delta^k)} b_{d,n}(g) d \mu(g)\\
&= \sum_{ Q }  \frac{1}{\mu(B(s,\delta^k))} \int_{B(s,\delta^k) \cap Q} b_{d,n}(g) d \mu(g) \\
&= \frac{1}{\mu(B(s,\delta^k))} \int_{\CI(B(s,\delta^k),n)} b_{d,n}(g) d \mu(g) \\
&= M_{k,n} b_{d,n}(s).
\end{align*}

Note that $p_n \CE_n(f) p_n \lesssim \lambda p_n$ for all $n \in \N$. Now since $b_{d,n}= p_nfp_n- p_nf_np_n$ and $\CE_n(p_n f p_n)= p_n f_n p_n$ for all $n \in \N$, by \Cref{prop: norm est prop}, we observe that 
\[
\norm{A_{\delta^k} b_{d,n}}_{2,w} \lesssim \sqrt{C(w)[w]_{A_1} \delta^{(n-k) \epsilon}} \norm{p_n f_n p_n}_{2,w} \lesssim \sqrt{C(w)[w]_{A_1}}\lambda \delta^{(n-k) \epsilon/2} \norm{p_n}_{2,w}.
\]
This proves the result.
\end{proof}

\subsubsection{Estimate of bad off-diagonal part}
First consider the decomposition:
\begin{equation*}
		Tb_{off} = (1-\zeta) Tb_{off} (1-\zeta) + \zeta Tb_{off} (1-\zeta) + (1-\zeta) Tb_{off} \zeta + \zeta Tb_{off} \zeta.
\end{equation*}
Then, by applying \Cref{lem: zeta estimate}, we again reduce the problem to estimating the term $\tilde{\varphi}_w ( \abs{\zeta Tb_{off} \zeta}> \lambda/9)$. Before we proceed, we need the following crucial result.
	
\begin{lem}\label{lem: bad part est prop}
Let $s \in G$ and $ k>n >n_{r_0}$. Then the following estimate holds;
    \begin{align*}
        \norm{\int_{\CI(B(s,\delta^k),n)} b_n^{off} (t) d\mu(t)}_1
\lesssim \lambda \int_{\CI_1(B(s,\delta^k),n)} \tau(p_n(t)) d \mu(t).
    \end{align*}
\end{lem}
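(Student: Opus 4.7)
The crux of the argument is an algebraic identity involving the Cuculescu projections, namely $p_n f_n q_n = 0$ (and, by taking adjoints, $q_n f_n p_n = 0$). Since $q_n = 1_{(0,\lambda]}(q_{n+1} f_n q_{n+1})$ by \Cref{thm: Cuculescu}, it commutes with $q_{n+1} f_n q_{n+1}$, and therefore so does $p_n = q_{n+1} - q_n$. Combined with $p_n q_{n+1} = q_{n+1} p_n = p_n$ (as $p_n \leq q_{n+1}$), this commutation gives $p_n f_n q_{n+1} = q_{n+1} f_n p_n$, and hence
\[
p_n f_n q_n = (p_n f_n q_{n+1}) q_n = (q_{n+1} f_n p_n) q_n = q_{n+1} f_n (p_n q_n) = 0,
\]
because $p_n q_n = 0$. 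Consequently, on each dyadic cube $Q^n_\alpha$ the off-diagonal bad part simplifies to $b_n^{off}(t) = p_{Q^n_\alpha} f(t) q_{Q^n_\alpha} + q_{Q^n_\alpha} f(t) p_{Q^n_\alpha}$.

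Next, I would decompose the integral over $\CI(B(s,\delta^k), n)$ as a sum over the boundary cubes $\CA := \{\alpha \in I_n : \partial B(s, \delta^k) \cap Q^n_\alpha \neq \emptyset\}$, writing
\[
\int_{\CI(B(s,\delta^k), n)} b_n^{off}(t)\, d\mu(t) = \sum_{\alpha \in \CA} \bigl(p_{Q^n_\alpha} F_\alpha q_{Q^n_\alpha} + q_{Q^n_\alpha} F_\alpha p_{Q^n_\alpha}\bigr),
\]
where $F_\alpha := \int_{Q^n_\alpha \cap B(s,\delta^k)} f(t)\, d\mu(t) \geq 0$ satisfies $F_\alpha \leq \mu(Q^n_\alpha)\, f_{Q^n_\alpha}$ as operators (since $f\geq 0$). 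By the triangle inequality, it suffices to prove, for each $\alpha$,
\[
\norm{p_{Q^n_\alpha} F_\alpha q_{Q^n_\alpha}}_1 \lesssim \lambda\, \mu(Q^n_\alpha)\, \tau(p_{Q^n_\alpha}),
\]
together with the corresponding bound for the adjoint term.

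To establish this, let $Q^{n+1}_\beta$ be the parent of $Q^n_\alpha$, so $p_{Q^n_\alpha}, q_{Q^n_\alpha} \leq q_{Q^{n+1}_\beta}$. Inserting the parent projection on both sides gives $p_{Q^n_\alpha} F_\alpha q_{Q^n_\alpha} = p_{Q^n_\alpha}\bigl(q_{Q^{n+1}_\beta} F_\alpha q_{Q^{n+1}_\beta}\bigr) q_{Q^n_\alpha}$. Using $F_\alpha \leq \mu(Q^n_\alpha) f_{Q^n_\alpha}$, the pointwise estimate $f_{Q^n_\alpha} \lesssim f_{Q^{n+1}_\beta}$ derived in the proof of \Cref{prop: cucu proj prop}, and the Cuculescu bound $q_{Q^{n+1}_\beta} f_{Q^{n+1}_\beta} q_{Q^{n+1}_\beta} \leq \lambda q_{Q^{n+1}_\beta}$, I obtain
\[
q_{Q^{n+1}_\beta} F_\alpha q_{Q^{n+1}_\beta} \lesssim \lambda\, \mu(Q^n_\alpha)\, q_{Q^{n+1}_\beta},
\]
so $\norm{q_{Q^{n+1}_\beta} F_\alpha q_{Q^{n+1}_\beta}}_\infty \lesssim \lambda \mu(Q^n_\alpha)$. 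The non-commutative Hölder inequality $\norm{p M q}_1 \leq \norm{p}_1 \norm{M}_\infty \norm{q}_\infty$ then yields the claimed per-cube bound; summing over $\alpha \in \CA$ and recognising $\sum_\alpha \mu(Q^n_\alpha) \tau(p_{Q^n_\alpha}) = \int_{\CI_1(B(s,\delta^k),n)} \tau(p_n(t))\, d\mu(t)$ finishes the proof.

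The main obstacle is really the first step. Without the vanishing of $p_n f_n q_n$, the natural Cauchy--Schwarz bound $\norm{p F q}_1 \leq \sqrt{\tau(p F p)\,\tau(q F q)}$ produces a factor $\tau(q_{Q^n_\alpha})$ which may be infinite, since $q_{Q^n_\alpha}$ is close to the identity. The identity $p_n f_n q_n = 0$ is exactly what allows us to trade Cauchy--Schwarz for a one-sided Hölder estimate that only involves the finite quantity $\tau(p_{Q^n_\alpha})$, producing the desired clean bound.
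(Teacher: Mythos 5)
Your proof is correct, and you take a genuinely different (arguably more streamlined) route to the per-cube estimate than the paper does.

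You open with the algebraic identity $p_n f_n q_n = 0$, which you verify in full (the paper asserts the equality $b_n^{off} = p_n f q_n + q_n f p_n$ without spelling this out), using the commutation of $p_n$ with $q_{n+1} f_n q_{n+1}$ and $p_n q_n = 0$; that argument is sound. For the per-cube bound the paper applies the operator-valued Cauchy--Schwarz (\Cref{prop:Holder-type-ineq} with exponents $1$ and $\infty$) to split $\norm{\int p f q}_1 \le \norm{(\int p f p)^{1/2}}_1 \norm{(\int q f q)^{1/2}}_\infty$, then controls the first factor by an additional $L^{1/2}$-H\"older step $\norm{p\,(\int p f p)}_{1/2}\le\norm{p}_1\norm{\int p f p}_1$ and the bound $p_{Q^n_\alpha} f_{Q^n_\alpha} p_{Q^n_\alpha}\lesssim\lambda p_{Q^n_\alpha}$, and the second by $q_{Q^n_\alpha} f_{Q^n_\alpha} q_{Q^n_\alpha}\le\lambda$. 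You instead sandwich the entire middle term with the parent projection, writing $p_{Q^n_\alpha} F_\alpha q_{Q^n_\alpha} = p_{Q^n_\alpha}\bigl(q_{Q^{n+1}_\beta} F_\alpha q_{Q^{n+1}_\beta}\bigr)q_{Q^n_\alpha}$, bound $q_{Q^{n+1}_\beta} F_\alpha q_{Q^{n+1}_\beta}\lesssim\lambda\mu(Q^n_\alpha)\,q_{Q^{n+1}_\beta}$ in operator norm via $F_\alpha\le\mu(Q^n_\alpha) f_{Q^n_\alpha}\lesssim\mu(Q^n_\alpha) f_{Q^{n+1}_\beta}$ and the Cuculescu relation, and finish with the elementary three-term H\"older $\norm{pMq}_1\le\norm{p}_1\norm{M}_\infty\norm{q}_\infty$. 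Both approaches use the same ingredients (\cref{eq: cucu proj prop} and the doubling comparison from the proof of \Cref{prop: cucu proj prop}) and yield the same per-cube bound $\lambda\mu(Q^n_\alpha)\tau(p_{Q^n_\alpha})$, but yours avoids the vector-valued $L^2_c$ machinery and the $L^{1/2}$-norm manipulation entirely, which is a nice simplification. One small interpretive correction: your closing remark that plain Cauchy--Schwarz would produce a factor $\tau(q_{Q^n_\alpha})$ only applies to the symmetric $L^2$--$L^2$ version; the paper's asymmetric $L^1$--$L^\infty$ version already sidesteps that. The real role of $p_n f_n q_n = 0$ in both proofs is to remove the $f_n$ contribution so that the remaining integrand $p_n f q_n$ is a genuine off-diagonal piece of a \emph{positive} operator $f$, which is what makes the parent-projection (or Cauchy--Schwarz) estimate go through.
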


\begin{proof}
   Recall that   $b_n^{off}= p_n(f- f_n) q_n + q_n(f- f_n) p_n
        =p_nfq_n + q_nfp_n$, (see \Cref{thm: Cuculescu}(1)), it is enough to prove the result for $b_n^{off}= p_nfq_n$. First, applying \Cref{prop:Holder-type-ineq} with $ \frac{1}{1} = \frac{1}{1}+ \frac{1}{\infty}$, we note that
\begin{align*}
    \norm{\int_{B(s, \delta^k) \cap Q^n_\alpha} (p_nfq_n) (t) d\mu(t)}_1
   & \leq \norm{\int_{ Q^n_\alpha} p_{Q^n_\alpha}f(t)q_{Q^n_\alpha} d\mu(t)}_1\\
    &\leq \norm{\left(\int_{Q^n_\alpha} p_{Q^n_\alpha}f(t)p_{Q^n_\alpha} \right)^{1/2} }_1
    \norm{\left(\int_{ Q^n_\alpha} q_{Q^n_\alpha}f(t)q_{Q^n_\alpha} \right)^{1/2} }_\infty\\
    &=\norm{\int_{Q^n_\alpha} p_{Q^n_\alpha}f(t)p_{Q^n_\alpha}  }_{1/2}^{1/2}
    \norm{\int_{ Q^n_\alpha} q_{Q^n_\alpha}f(t)q_{Q^n_\alpha}  }^{1/2}_\infty.
\end{align*}
Also,
\begin{align*}
    \norm{\int_{Q^n_\alpha} p_{Q^n_\alpha}f(t)p_{Q^n_\alpha}  }_{1/2}
    &=\norm{p_{Q^n_\alpha} \int_{Q^n_\alpha} p_{Q^n_\alpha}f(t)p_{Q^n_\alpha}  }_{1/2}\\
    &\leq \norm{p_{Q^n_\alpha}}_1  \norm{\int_{Q^n_\alpha} p_{Q^n_\alpha}f(t)p_{Q^n_\alpha}}_1~ \text{by Holder's inequality with } \frac{1}{1/2}= \frac{1}{1} + \frac{1}{1}.
\end{align*}
Now since $q_{Q^n_\alpha}f(t)q_{Q^n_\alpha} \lesssim \lambda $ and $p_{Q^n_\alpha}f(t)p_{Q^n_\alpha} \lesssim \lambda p_{Q^n_\alpha}$, we obtain
\begin{align*}
    \norm{\int_{B(s, \delta^k) \cap Q^n_\alpha} (p_nfq_n) (t) d\mu(t)}_1
    &\lesssim \norm{p_{Q^n_\alpha}}^{1/2}_1  \norm{\int_{Q^n_\alpha} p_{Q^n_\alpha}f(t)p_{Q^n_\alpha}}^{1/2}_1 (\lambda \mu(Q^n_\alpha))^{1/2}\\
    &\lesssim \tau(p_{Q^n_\alpha})^{1/2} \left( \lambda \tau(p_{Q^n_\alpha}) \mu(Q^n_\alpha) \right)^{1/2} (\lambda \mu(Q^n_\alpha))^{1/2}\\
    &= \lambda \tau(p_{Q^n_\alpha}) \mu(Q^n_\alpha)\\
    &= \lambda \varphi(p_n \chi_{Q^n_\alpha}).
\end{align*}
Furthermore,
\begin{align*}
    \norm{\int_{\CI(B(s,\delta^k),n)} (p_nfq_n) (t) d\mu(t)}_1
    &\leq  \sum_{\alpha \in I_n,~ \partial(B(s,\delta^k)) \cap Q^n_\alpha \neq \emptyset} \norm{\int_{B(s, \delta^k) \cap Q^n_\alpha} (p_nfq_n) (t) d\mu(t)}_1\\
    &\lesssim \lambda \sum_{\alpha \in I_n,~ \partial(B(s,\delta^k)) \cap Q^n_\alpha \neq \emptyset} \varphi(p_n \chi_{Q^n_\alpha})\\
    &= \lambda \varphi(p_n \chi_{\CI_1(B(s,\delta^k),n)})\\
    &=\lambda \int_{\CI_1(B(s,\delta^k),n)} \tau(p_n(t)) d \mu(t).
\end{align*}
\end{proof}

\begin{prop}\label{bad part est prop}
		The following estimate holds true;
		\begin{equation}
			\tilde{\varphi}_w ( \abs{\zeta Tb_{off} \zeta}> \lambda/9) \lesssim C(w)[w]_{A_1}  \frac{\norm{f}_{1,w}}{\lambda}.
		\end{equation}
	\end{prop}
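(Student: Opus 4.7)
The plan is to mirror the strategy used for the bad diagonal part in \Cref{prop: bad diag estimate}. First I would apply Chebyshev together with the $L^2$-orthonormality of the Rademacher sequence to reduce to controlling
\[
\sum_{k > n_{r_0}} \|\zeta (A_{\delta^k} - \CE_k) b_{off} \zeta\|_{2,w}^2,
\]
and then decompose $b_{off} = \sum_n b_{off,n}$ with $b_{off,n} := p_n(f-f_n)q_n + q_n(f-f_n)p_n$, invoking the almost orthogonality principle (\Cref{lem: almost orthogonality}) with $S_k h := \zeta(A_{\delta^k} - \CE_k)h\zeta$, $u_n := b_{off,n}$, and $v_n := \lambda p_n$. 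By \Cref{thm: galkazka's estimate}, $\sum_n \|v_n\|_{2,w}^2 = \lambda^2 \varphi_w(1-q) \lesssim [w]_{A_1} \lambda \|f\|_{1,w}$, so everything boils down to verifying $\|S_k b_{off,n}\|_{2,w} \lesssim \sigma(n-k)\|v_n\|_{2,w}$ for some summable $\sigma$.

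The case analysis should parallel the diagonal one. For $k \leq n$, I would show $S_k b_{off,n} = 0$ by the same cancellation as in \Cref{lem:C8-1}: both summands of $b_{off,n}(t)$ carry a factor of $p_n(t)$ (on the left or on the right), and $\zeta(s) p_n(t) = p_n(t)\zeta(s) = 0$ whenever $t \in \tilde{Q}^n_{\alpha(s)}$, a set that contains $B(s,\delta^k)$ and $Q^k_{\alpha(s)}$ when $k \leq n$. For $k > n$, \Cref{thm: CZ decomp}(3) gives $\CE_n b_{off,n} = 0$ and hence $\CE_k b_{off,n} = 0$, while the mean-zero property over complete cubes restricts the averaging to the boundary set $\CI(B(s,\delta^k),n)$.

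The hard part will be passing from an $L^1$ to an $L^2$ estimate for $A_{\delta^k} b_{off,n}(s)$. I would use the interpolation $\|x\|_2^2 \leq \|x\|_1\|x\|_\infty$; the $L^1$ side is exactly \Cref{lem: bad part est prop}. For the $L^\infty$ side I would exploit the identity $p_n f_n q_n = q_n f_n p_n = 0$ (which follows from $q_n$ commuting with $q_{n+1} f_n q_{n+1}$ in the Cuculescu construction, together with $p_n q_n = 0$) to write $b_{off,n} = p_n f q_n + q_n f p_n$, and then apply an operator Cauchy--Schwarz inequality $\|\int p_n f q_n\|_\infty \leq \|\int p_n f p_n\|_\infty^{1/2} \|\int q_n f q_n\|_\infty^{1/2}$ cube-by-cube, together with the diagonal bounds $\int_{Q^n_\alpha} p_{Q^n_\alpha} f p_{Q^n_\alpha}\,d\mu \leq \lambda \mu(Q^n_\alpha) p_{Q^n_\alpha}$ and its analogue for $q$, to conclude $\|\int_\CI b_{off,n}\,d\mu\|_\infty \lesssim \lambda \mu(\CI_1)$. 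Multiplying the two bounds, dividing by $\mu(B(s,\delta^k))^2$, using the annular decay $\mu(\CI_1)/\mu(B(s,\delta^k)) \lesssim \delta^{(n-k)\epsilon}$, integrating against $w$, swapping orders of integration, and applying \Cref{lem: norm est prop} to harvest a second factor $\delta^{(n-k)\epsilon}$, I expect
\[
\|A_{\delta^k} b_{off,n}\|_{2,w}^2 \lesssim C(w)[w]_{A_1}\,\delta^{2(n-k)\epsilon}\, \lambda^2 \|p_n\|_{2,w}^2,
\]
which verifies the almost orthogonality hypothesis with $\sigma(n-k) = \sqrt{C(w)[w]_{A_1}}\,\delta^{(n-k)\epsilon}$ for $n < k$ (and $0$ otherwise). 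The main obstacle is precisely this $L^\infty$-bound on $\int_\CI b_{off,n}\,d\mu$: a pointwise bound on $b_{off,n}(t)$ is unavailable because $f$ is unbounded, and the vanishing $p_n f_n q_n = 0$ is the essential input that converts the problem into one controlled by the diagonal Cuculescu bounds.
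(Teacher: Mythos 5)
Your argument is mathematically sound, but it takes a genuinely different route from the paper. The paper handles the off-diagonal bad part entirely at the $L^1$ level: it applies Chebyshev's inequality directly to $\widetilde{\varphi}_w$, bounds $\lambda\,\widetilde{\varphi}_w(|\zeta T b_{off}\zeta|>\lambda/9)$ by $\sum_n\sum_k \|\zeta(A_{\delta^k}-\mathcal{E}_k)b_n^{off}\zeta\|_{1,w}$ via the triangle inequality, kills the terms with $k\le n$ by the cancellation of \Cref{lem:C8-1}, reduces $k>n$ to $M_{k,n}b_n^{off}$ using $\mathcal{E}_k b_n^{off}=0$, and then invokes \Cref{lem: bad part est prop} together with \Cref{lem: norm est prop} to get $\|M_{k,n}b_n^{off}\|_{1,w}\lesssim \lambda C(w)[w]_{A_1}\delta^{(n-k)\epsilon}\varphi_w(p_n)$. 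The geometric series in $k-n$ is then summed directly; no almost orthogonality and no $L^\infty$ control of $b_{off,n}$ are needed. In contrast, you mimic the diagonal case: $L^2$ Chebyshev, \Cref{lem: almost orthogonality} with $v_n=\lambda p_n$, and the interpolation inequality $\|x\|_2^2\le \|x\|_1\|x\|_\infty$ to upgrade the $L^1$ estimate of \Cref{lem: bad part est prop} to an $L^2$ estimate, which forces you to establish a new $L^\infty$ bound $\|\int_{\mathcal{I}}b_{off,n}\|_\infty\lesssim\lambda\,\mu(\mathcal{I}_1)$. Your derivation of that $L^\infty$ bound is correct: $p_n f_n q_n=0$ follows from $p_nq_n=0$ and the commutation of $q_n$ with $q_{n+1}f_nq_{n+1}$ (and is already implicitly used in the statement of \Cref{lem: bad part est prop}, where $b_n^{off}=p_nfq_n+q_nfp_n$), and the cube-by-cube operator Cauchy--Schwarz $\|\int p f q\|_\infty\le\|\int pfp\|_\infty^{1/2}\|\int qfq\|_\infty^{1/2}$ together with the Cuculescu bounds $p_{Q^n_\alpha}f_{Q^n_\alpha}p_{Q^n_\alpha}\lesssim\lambda p_{Q^n_\alpha}$ and $q_{Q^n_\alpha}f_{Q^n_\alpha}q_{Q^n_\alpha}\le\lambda q_{Q^n_\alpha}$ closes the estimate. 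So your route works, and it has the aesthetic virtue of treating $b_d$ and $b_{off}$ uniformly, but it is strictly heavier: the paper's $L^1$ route bypasses both the almost orthogonality machinery and the $L^\infty$ bound, because the $L^1$ estimate of \Cref{lem: bad part est prop} already carries enough decay in $k-n$ to be summed directly. You should be aware that the $L^1$ route exists, since it is the shorter argument and the natural first thing to try once one has \Cref{lem: bad part est prop} in hand.
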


    \begin{proof}
        Applying Chebychev's inequality we obtain 
        \begin{align*}
            \lambda \tilde{\varphi}_w ( \abs{\zeta Tb_{off} \zeta}> \lambda/9) 
            &\lesssim \norm{\zeta Tb_{off} \zeta}_{1,w}\\
            &\leq \sum_{n} \sum_k \norm{\zeta \epsilon_k (A_{\delta^k}- \CE_k) b_n^{off} \zeta}_{1,w}\\
			&\leq \sum_{n} \sum_k \norm{\zeta (A_{\delta^k} - \CE_k) b_n^{off} \zeta}_{1,w},
        \end{align*}
        where, for all $n \in \Z$
        \[
        b_n^{off}= p_n(f- f_n) q_n + q_n(f- f_n) p_n
        =p_nfq_n + q_nfp_n.
        \]
        By an argument similar to \Cref{lem:C8-1}, it follows that 
        \[
        \zeta (A_{\delta^k} - \CE_k) b_n^{off} \zeta=0
        \]
        for $k \leq n$. For $k>n$, first note that 
        \[
        \CE_k(b_n^{off})= \CE_k \left( \CE_n(b_n^{off}) \right)=0
        \]
        follows from \Cref{thm: CZ decomp}. Therefore, in this case, it is enough to show that 
        \[
        \sum_{n} \sum_k \norm{\zeta A_{\delta^k} b_n^{off} \zeta}_{1,w} \lesssim C(w)[w]_{A_1} \norm{f}_{1,w}
        \]
        Furthermore, observe that 
\begin{align*}
    A_{\delta^k} b_n^{off}(s)
&= \frac{1}{\mu(B(s,\delta^k))} \int_{B(s,\delta^k)} b_n^{off}(g) d \mu(g)\\
&= \frac{1}{\mu(B(s,\delta^k))} \int_{\CI(B(s,\delta^k),n)} b_n^{off}(g) d \mu(g)\\
&= M_{k,n} b_n^{off}(s),
\end{align*}
since 
\[
\int_{Q^n_\alpha} b_n^{off}(g) d\mu(g)
= \int_{Q^n_\alpha} \CE_n(b_n^{off})(g) d \mu(g)
=0, ~\text{by  \Cref{thm: CZ decomp}}.
\]

\noindent
Hence, for $s \in G$, recall that  $A(s)= B(s,\delta^k + C_1\delta^n)   \setminus  B(s,\delta^k - C_1\delta^n)$ and from \Cref{lem: bad part est prop}  we  obtain
\begin{align*}
\norm{M_{k,n} b_n^{off}}_{1,w}
&=\int_G \norm{M_{k,n} b_n^{off}(s)}_1 w(s) d\mu(s)\\
&\lesssim \lambda \int_G \left( \frac{1}{\mu(B(s,\delta^k))} \int_{\CI_1(B(s,\delta^k),n)} \tau(p_n(g)) d \mu(g) \right) w(s) d\mu(s)\\
&\leq \lambda \int_G \left( \frac{1}{\mu(B(s,\delta^k))} \int_{A(s)} \tau(p_n(g)) d \mu(g) \right) w(s) d\mu(s) \\
&= \lambda \int_G \tau(p_n(g)) \left( \int_G \frac{\chi_{A(g)}(s)}{\mu(B(s,\delta^k))} w(s) d\mu(s) \right) d\mu(g)~ \text{(using Fubini)}\\
            &\lesssim C(w)[w]_{A_1}  \delta^{(n-k)\epsilon} \lambda  \int_G \tau(p_n(g)) w(g) d\mu(g)~ \text{(by \Cref{lem: norm est prop})}\\
            &= \lambda C(w)[w]_{A_1}  \delta^{(n-k)\epsilon} \varphi_w(p_n).
\end{align*}
So,
\begin{align*}
    \sum_{n} \sum_{k>n} \norm{\zeta A_{\delta^k} b_n^{off} \zeta}_{1,w} 
    &\lesssim \sum_{n} \sum_{k>n} \lambda C(w)[w]_{A_1}  \delta^{(n-k)\epsilon} \varphi_w(p_n)\\
    &\leq \lambda C(w)[w]_{A_1}  \sum_n \varphi_w(p_n)\\
    &= \lambda C(w)[w]_{A_1}  \varphi_w(1-q) \\ 
    &\lesssim C(w)[w]_{A_1} \norm{f}_{1,w}.
\end{align*}
\end{proof}

\subsubsection{Estimate of good part}

In this section, we will prove that $T$ is bounded from $L_{2}(\mathcal N_w)$ to $L_2(L_{\infty}(\Omega)\otimes \CN_w)$. To begin with we first recall the following result.

For $k \leq n$ and $\alpha \in I_n$, let us define the set
\[
\mathfrak{H}(B_{\delta^k}, Q^n_\alpha) := \left\{ x \in Q^n_\alpha : B(x, \delta^k) \cap (Q^n_\alpha)^c \neq \emptyset \right\}.
\]
Suppose $\CN(\alpha)= \{ \beta \in I_n:   Q_\beta^n  \text{ is a neighbour of } Q_\alpha \}  $ and  we assume that $ \alpha \in \CN(\alpha)$. Then  
We have the following useful lemma.

\begin{prop}\label{prop: measure of a good point set}\cite[Lemma 2.7]{hong2021quantitative}
    There exists constants $\eta>0$ and $n_1 \in \N$ depending only on the group $G$ such that for all $n \geq k > n_1$, the following estimate holds.
    \[
    \mu(\mathfrak{H}(B_{\delta^{k}},Q^{n}_\alpha))\lesssim \delta^{(k-n)\eta} \mu(Q^{n}_\alpha).
    \]
\end{prop}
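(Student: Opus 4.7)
The plan is to reduce the estimate to a small-boundary property of the Hyt\"onen--Kairema dyadic cubes and then exploit the annular decay condition. Setting
\[
\partial^\rho Q^n_\alpha := \{ x \in Q^n_\alpha : d(x,(Q^n_\alpha)^c) \leq \rho\},
\]
one has $\mathfrak{H}(B_{\delta^k},Q^n_\alpha) \subseteq \partial^{\delta^k} Q^n_\alpha$ directly from the definition of $\mathfrak{H}$, so it suffices to prove
\[
\mu\bigl(\partial^{\delta^k}Q^n_\alpha\bigr) \lesssim \delta^{(k-n)\eta}\mu(Q^n_\alpha)
\]
for some $\eta > 0$ depending only on $G$. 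The two ingredients will be the ball sandwich $B(z^n_\alpha, a_0\delta^n) \subseteq Q^n_\alpha \subseteq B(z^n_\alpha, C_1\delta^n)$ and separation $d(z^n_\alpha, z^n_\beta)\geq c_0\delta^n$ from \Cref{prop: hytonen lem}, together with the annular decay \Cref{lem: annular decay}; the threshold $n_1$ is chosen so that $\delta^k \geq 2 r_0$ and the annular decay is applicable at scale $\delta^k$.

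The first step is to give a Voronoi-type description of the inner boundary. For $x \in \partial^{\delta^k} Q^n_\alpha$ there exists $y \in (Q^n_\alpha)^c$ with $d(x,y) \leq \delta^k$; choosing $\beta$ so that $y \in Q^n_\beta$, one obtains a neighboring center $z^n_\beta$ satisfying $|d(x, z^n_\alpha) - d(x,z^n_\beta)| \lesssim \delta^k$ up to a controlled error coming from the recursive construction of the cubes. Since $x \in B(z^n_\alpha, C_1\delta^n)$, any such competing $z^n_\beta$ lies in $B(z^n_\alpha, 2C_1\delta^n + \delta^k)$, and pairwise separation at scale $c_0\delta^n$ together with the geometric doubling condition \eqref{eq: geo-doubling} forces the number of admissible $\beta$'s to be bounded by a constant depending only on $G$.

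For each such $\beta$, the strip $\{x \in B(z^n_\alpha,C_1\delta^n) : |d(x,z^n_\alpha)-d(x,z^n_\beta)| \lesssim \delta^k\}$ is contained in an annulus of the form $B(z^n_\beta, r + C\delta^k)\setminus B(z^n_\beta, r - C\delta^k)$ with $r \asymp \delta^n$. \Cref{lem: annular decay} bounds its measure by $K_\epsilon(\delta^{k-n})^\epsilon \mu(B(z^n_\beta,\delta^n))$, and the doubling estimate \eqref{eq: doubl cond} converts this into $\lesssim (\delta^{k-n})^\epsilon \mu(Q^n_\alpha)$. Summing the $O(1)$ contributions yields the desired bound with $\eta = \epsilon$. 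The main obstacle I expect is making the Voronoi-type description rigorous: the Hyt\"onen--Kairema cubes are not literal Voronoi cells, and their boundaries are built up through all finer levels of the construction. A telescoping argument descending through levels $n, n-1, \dots, k$, at each step invoking the annular decay to absorb the ``new boundary'' contribution arising from the finer-level adjacencies, should control this cumulative error; the constants $a_0, c_0, C_0, C_1$ in \Cref{prop: hytonen lem} are calibrated precisely so that the requisite separation between neighboring cubes persists uniformly across scales.
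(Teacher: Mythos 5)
This proposition is cited from \cite[Lemma 2.7]{hong2021quantitative}; the paper gives no proof of its own, so the comparison is between your attempt and the external reference rather than an in-paper argument. Your opening reduction, namely that $\mathfrak{H}(B_{\delta^k},Q^n_\alpha)$ is contained in the inner boundary strip $\partial^{\delta^k}Q^n_\alpha$, is correct and is indeed the right starting point. However, the central step --- the ``Voronoi-type description'' asserting that any $x\in\partial^{\delta^k}Q^n_\alpha$ satisfies $\abs{d(x,z^n_\alpha)-d(x,z^n_\beta)}\lesssim\delta^k$ for some neighboring center --- is false for Hyt\"onen--Kairema cubes. These cubes are built by a recursive parent-assignment procedure descending through all finer scales (see \Cref{prop: hytonen lem}), not by nearest-center assignment; a point can lie in $Q^n_\alpha$ while being substantially closer to $z^n_\beta$, and a point near $\partial Q^n_\alpha$ need satisfy no near-equidistance relation at all. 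The phrase ``up to a controlled error coming from the recursive construction'' is not a small correction term: that error \emph{is} the entire content of the thin-boundary estimate being proved, so invoking it here is circular. Consequently the annular-decay computation that follows, which bounds the measure of a thin shell around a neighboring center, is estimating the wrong set. (Note also that the crude containment of $\partial^{\delta^k}Q^n_\alpha$ in $B(z^n_\alpha,C_1\delta^n)\setminus B(z^n_\alpha,a_0\delta^n-\delta^k)$ gives an annulus of width $\approx\delta^n$, to which \Cref{lem: annular decay} yields only an $O(1)$ bound, not the required decay in $n-k$.)

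Your closing remark correctly diagnoses the obstacle and points in the right direction: the standard proof of the small-boundary property for dyadic cubes in spaces of homogeneous type is Christ's iterative argument, descending level by level. But that argument is not a ``telescoping'' of annular decay; its engine is a \emph{good-centers lemma} --- each cube $Q^n_\alpha$ contains a descendant of the next generation centered at the same point $z^n_\alpha$, hence a subset of measure at least a fixed fraction $\varepsilon_0\mu(Q^n_\alpha)$ at distance $\gtrsim\delta^n$ from $(Q^n_\alpha)^c$ --- combined with a geometric iteration showing that the boundary-touching descendants at generation $n-m$ carry at most a $(1-\varepsilon_0)^m$ fraction of the mass. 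Without that lemma, a naive descent through levels does not contract: one merely rewrites $\partial^{\delta^k}Q^n_\alpha$ as a union of $\partial^{\delta^k}Q^{n-1}_\beta$ over the children and gets no gain. As written, the key quantitative step is therefore missing, and the proposal does not constitute a proof of the proposition.
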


Now we prove the following strong $(2,2)$ type inequality.

\begin{prop}
        For every $n \in \N$ and $\alpha \in I_n$, the number of cubes $Q^n_\beta$ such that $d(z^n_\alpha, z^n_\beta)< (2C_1 +1) \delta^n$ is bounded by a universal constant.
\end{prop}

\begin{proof}
    Let us choose $m\geq 0$ such that
    \begin{align*}
        2^{m-1} < \frac{2C_1+1+c_0}{c_0} \leq 2^m.
    \end{align*}
    Let $N$ be the cardinality of the set $J:=\{\beta \in I_n: d(z^n_\alpha, z^n_\beta) < (2C_1 +1) \delta^n \}$. Then observe that since the cubes $Q^n_\beta$ are disjoint, we have
    \begin{align*}
        N\min_{\beta \in J} \mu(B(z^n_\beta, c_0 \delta^n)) \leq \sum_{\beta \in J} \mu(B(z^n_\beta, c_0 \delta^n)) \leq \mu(B(z^n_\alpha, (2C_1 + c_0 +1)\delta^n)).
    \end{align*}
    Also by the measure doubling  condition, we have 
    $$\mu(B(z^n_\alpha, (2C_1 + c_0 +1)\delta^n)) \leq \mu(B(z^n_\alpha, 2^mc_0\delta^n)) \leq C^m \mu(B(z^n_\alpha, c_0 \delta^n))$$
    Furthermore, notice that 
    \begin{align*}
        &B(z^n_\alpha, c_0 \delta^n) \subseteq B(z^n_\beta, (2C_1+1+c_0) \delta^n) \subseteq B(z^n_\beta, 2^mc_0 \delta^n)\\
        \implies& \mu(B(z^n_\alpha, c_0 \delta^n)) \leq C^m \mu(B(z^n_\beta, c_0 \delta^n)) \\
        \implies& C^{-m} \mu(B(z^n_\alpha, c_0 \delta^n)) \leq   \mu(B(z^n_\beta, c_0 \delta^n))
    \end{align*}
Therefore, we obtain the following: 
\begin{align*}
NC^{-m} \leq C^{m} \implies N \leq C^{ 2m}. 
\end{align*}
\end{proof}

\begin{prop}\label{compare-1}
    Let $n \in \N$ and $\alpha \in I_n$ and $U^n_\alpha$ be the union of $Q^n_\alpha$ and all cubes $Q^n_\beta$ such that $d(z^n_\alpha, z^n_\beta)< (2C_1 +1)\delta^n$. Then we have the following.
    \begin{enumerate}
        \item For all $s \in Q^n_\alpha$, the ball $B(s, \delta^k) \subseteq U^n_\alpha$, when $k \leq n $.\\
        \item Suppose  $ Q_\beta^{n-1} \in U^n_\alpha$, then 
        $ w(U^n_\alpha) \lesssim [w]_{ A_2} w( Q_\beta^{n-1})$.
    \end{enumerate}
\end{prop}

\begin{proof}
    For the first part, let $y$ be an element of $G$ such that $d(s,y)<\delta^k$. Also note that $Q^n_\beta \subseteq B(z^n_\beta, C_1\delta^n)$ for all $\beta \in I_n$. Since union of $Q^n_\beta$, where $\beta \in I_{n}$ exhaust $G$, so we can assume that $y \in Q^n_\beta$ for some $\beta$. Then we must have
    \begin{align*}
        d(z^n_\alpha, z^n_\beta)
        \leq d(z^n_\alpha,s)+ d(s,y) + d(y, z^n_\beta) < (2C_1 +1)\delta^n.
    \end{align*}
    Therefore, we have $y \in U^n_\alpha$.\\

    For the second part, let $ d= 3C_1  +1$. 
    Then we note that 
    $$ U_\alpha^n \subseteq B( z_\beta^{n-1}, 2d \delta^n )$$
    Therefore, we observe that 
    \begin{align*}
     w(U^n_\alpha) &\leq w(  B( z_\beta^{n-1}, 2d \delta^n ) )\\
     & \leq   [w]_{ A_2} \left(  \frac{\mu( B( z_\beta^{n-1}, 2d \delta^n ))}
   {\mu( Q_\beta^{ n-1}) }\right)^2 w( Q_\beta^{n-1})\\
   &\leq [w]_{ A_2} \left(  \frac{\mu( B( z_\beta^{n-1}, 2d \delta^n ))}
   {\mu( B( z_\beta^{n-1}, c_0\delta^n))}\right)^2 w( Q_\beta^{n-1})\\
   & \lesssim [w]_{ A_2} w( Q_\beta^{n-1})
    \end{align*}
\end{proof}

\begin{lem}\label{lem: strong estimate} Suppose 
    $h \in L^2(\CN_w)$, then   
    \[
    \norm{Th}_{L^2(L_{\infty}(\Omega)\otimes\CN_w)}\lesssim C(w)(\max\{1, [w]_{A_1} \}) \norm{h}_{2,w}
    \]
\end{lem}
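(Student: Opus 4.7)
The approach is the standard almost-orthogonality method. Since at $p=2$ both column and row $\ell_2^{rc}$-norms collapse to $\sum_k \norm{f_k}_{2,w}^2$, Theorem \ref{thm: Khintchine} gives
\[
\norm{Th}_{L_2(L_\infty(\Omega)\otimes \CN_w)}^2 \approx \sum_{k>n_{r_0}}\norm{T_k h}_{2,w}^2,
\]
so it suffices to prove this sum is $\lesssim C(w)^2 \norm{h}_{2,w}^2$. Working in the dense class $\CN^c_{w,+}$, I would expand $h = \sum_n d_n h$ with the martingale differences $d_n h := \CE_n h - \CE_{n+1} h$, set $u_n = v_n = d_n h$, and apply Lemma \ref{lem: almost orthogonality} with $S_k = T_k$. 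This reduces the task to (i) an off-diagonal decay $\norm{T_k d_n h}_{2,w} \le \sigma(n-k)\norm{d_n h}_{2,w}$ with $\sum_j \sigma(j) < \infty$, together with (ii) the $p=2$ weighted non-commutative martingale square-function bound $\sum_n \norm{d_n h}_{2,w}^2 \lesssim C(w)\norm{h}_{2,w}^2$, which follows from the weighted Burkholder--Gundy inequality at $p=2$ available in the framework of \cite{galkazka2022sharp}.

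For the decay (i) I would split into two cases. When $k > n$, $\CE_k = \CE_k \CE_{n+1}$ together with $\CE_{n+1}(d_n h) = 0$ gives $T_k d_n h = A_{\delta^k} d_n h$; moreover, $\int_{Q^{n+1}_\alpha} d_n h\, d\mu = 0$ implies that only boundary-straddling cubes at level $n+1$ contribute to the ball average, so $A_{\delta^k} d_n h(s) = M_{k,n+1}(d_n h)(s)$, and Proposition \ref{prop: norm est prop} supplies the decay $\delta^{(n-k)\epsilon/2}$. When $k \le n$, $d_n h \in \CN_n \subseteq \CN_k$ forces $\CE_k d_n h = d_n h$, so $T_k d_n h = A_{\delta^k} d_n h - d_n h$; this difference vanishes wherever $B(x,\delta^k)$ is contained in a single cube at level $n$, and hence is supported on $\mathfrak{H} := \bigcup_\alpha \mathfrak{H}(B_{\delta^k}, Q^n_\alpha)$. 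Bounding
\[
\norm{T_k d_n h}_{2,w}^2 \le 2\norm{\chi_\mathfrak{H} d_n h}_{2,w}^2 + 2\norm{\chi_\mathfrak{H} A_{\delta^k} d_n h}_{2,w}^2,
\]
the first piece is handled cube-by-cube: since $d_n h$ is constant on each $Q^n_\alpha$, Proposition \ref{prop: measure of a good point set} combined with Theorem \ref{thm: Muckenhoupt prop}(3)(d) yields $w(\mathfrak{H} \cap Q^n_\alpha) \lesssim \delta^{(k-n)\eta\upsilon} w(Q^n_\alpha)$, giving the required decay. For the second piece, operator Jensen (Kadison--Schwarz) applied to the positive map $A_{\delta^k}$ yields $\tau(|A_{\delta^k} d_n h(x)|^2) \le A_{\delta^k}(\tau(|d_n h|^2))(x)$; Fubini then reduces the estimate to bounding an integral of the form $\int_\mathfrak{H} \chi_{B(y,\delta^k)}(x) w(x) / \mu(B(x,\delta^k)) \, d\mu(x)$, and a weighted-measure argument parallel to Lemma \ref{lem: norm est prop} (with $\mathfrak{H}$ in place of the annulus $A(y)$) delivers the matching decay $\delta^{(k-n)\eta\upsilon/2}$. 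In total, $\sigma(j) := \sqrt{C(w)[w]_{A_1}}\, \delta^{-|j|\min(\epsilon, \eta\upsilon)/2}$ is summable and completes (i).

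The principal obstacle lies in the $k \le n$ case, specifically in bounding $\norm{\chi_\mathfrak{H} A_{\delta^k} d_n h}_{2,w}$: although $T_k d_n h$ is supported on the thin boundary region $\mathfrak{H}$, the ball average $A_{\delta^k} d_n h$ itself can draw on mass of $d_n h$ located away from $\mathfrak{H}$, so a naive pointwise comparison fails. The resolution is operator Jensen, which reduces the operator-valued estimate to the scalar ball average of $\tau(|d_n h|^2)$, after which Fubini combined with the weighted-measure decay from Theorem \ref{thm: Muckenhoupt prop}(3)(d) and the geometric decay of $\mu(\mathfrak{H} \cap Q^n_\alpha)$ from Proposition \ref{prop: measure of a good point set} produces the desired decay in $n-k$.
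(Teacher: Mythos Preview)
Your overall strategy---almost orthogonality against the martingale differences $d_nh$, with the case split $k>n$ versus $k\le n$---is exactly what the paper does, and the $k>n$ case matches. The gap is in your treatment of the second piece $\norm{\chi_{\mathfrak{H}} A_{\delta^k} d_nh}_{2,w}$ in the $k\le n$ case.

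You claim that after Kadison--Schwarz and Fubini, the inner integral
\[
\int_{\mathfrak{H}} \frac{\chi_{B(y,\delta^k)}(x)}{\mu(B(x,\delta^k))}\, w(x)\, d\mu(x)
\]
admits a bound with decay $\delta^{(k-n)\eta\upsilon}$ uniformly in $y$, ``parallel to Lemma~\ref{lem: norm est prop}''. This fails: when $y$ lies on the boundary of some $Q^n_\alpha$, the ball $B(y,\delta^k)$ can sit entirely inside $\mathfrak{H}$ (since $\mathfrak{H}$ is precisely a $\delta^k$-thickening of the cube boundaries), and the integral is then comparable to $[w]_{A_1}\, w(y)$ with no decay at all. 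In Lemma~\ref{lem: norm est prop} the annulus $A(y)$ is thin relative to the ball $B(y,\delta^k)$ \emph{for every} $y$; here $\mathfrak{H}\cap B(y,\delta^k)$ is thin relative to the cube $Q^n_\alpha$, not to the ball, so the analogy breaks down. Your argument can be salvaged by observing that the outer $y$-integral is itself supported in a $2\delta^k$-thickening of the cube boundaries and then re-running your first-piece estimate on that support---but this is not what you wrote.

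The paper avoids the split altogether. For $x\in Q^n_\alpha\cap\mathfrak{H}$, the ball $B(x,\delta^k)$ meets only $Q^n_\alpha$ and its immediate neighbours (since $k\le n$), so one has the direct pointwise bound
\[
\norm{A_{\delta^k}d_nh(x)-d_nh(x)}_{L^2(\CM)} \le 2\, m_{Q^n_\alpha},
\]
where $m_{Q^n_\alpha}$ is the maximum of $\norm{d_nh(\cdot)}_{L^2(\CM)}$ over $Q^n_\alpha$ and its neighbours. This yields
\[
\int_{Q^n_\alpha}\norm{T_kd_nh}_{L^2(\CM)}^2\,w\,d\mu \lesssim m_{Q^n_\alpha}^2\, w\big(\mathfrak{H}(B_{\delta^k},Q^n_\alpha)\big) \lesssim C(w)\,\delta^{(k-n)\eta\upsilon}\, m_{Q^n_\alpha}^2\, w(Q^n_\alpha)
\]
in one stroke, via \Cref{thm: Muckenhoupt prop}(3)(d) and \Cref{prop: measure of a good point set}; summing over $\alpha$ with bounded overlap of neighbourhoods (and $A_1$ to compare $w(Q^n_\alpha)$ with the weight of the neighbouring cube realising the maximum) finishes the case without any Kadison--Schwarz step.
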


\begin{proof}
     Observe that it follows from \cite[Lemma 6.7]{fan2025weighted} that 
\[
\sum_{n}\norm{dh_{n}}_{2,w}^{2}=\norm{h}_{2,w}^{2},
    \]
    where $ h_n = \CE_n(h)$,  $dh_n= h_{n}- h_{n+1}$ and $h= \sum_n dh_n$.

    Furthermore, by \Cref{lem: almost orthogonality}, it suffices to prove
\begin{align*}
\norm{(A_{\delta^k}-\CE_k)dh_{n}}^{2}_{2,w}\lesssim C(w)\delta^{-|n-k|}\|dh_{n} \|^{2}_{2,w}.
\end{align*}
Consider the case $k \leq n$. In this case note that $\CE_k (dh_{n})=dh_{n}$. Therefore, it is enough to show
\begin{align*}
\norm{A_{\delta^k}dh_{n}-dh_n}^{2}_{2,w}\lesssim C(w) \delta^{k-n}\norm{dh_n}^{2}_{2,w}.
\end{align*}
Note that 
\begin{align*}
    \norm{A_{\delta^k}dh_{n}-dh_n}^{2}_{2,w}
    &= \int_G \norm{A_{\delta^k}dh_{n}(s)-dh_n(s)}^{2}_{2} w(s) d\mu(s)\\
    &= \sum_{\alpha \in I_n} \int_{Q^n_\alpha} \norm{A_{\delta^k}dh_{n}(s)-dh_n(s)}^{2}_{2} w(s) d\mu(s).
\end{align*}
Since \( dh_n \) is a constant operator on each dyadic cube \( Q^n_\alpha \) for all \( \alpha \in I_n \), we have
\[
A_{\delta^k} dh_n(x) - dh_n(x) = 0 \quad \text{if} \quad B(x, \delta^k) \subseteq Q^n_\alpha.
\]
Therefore, for \( x \in Q^n_\alpha \), the term \( (A_{\delta^k} dh_n - dh_n)(x) \) may be nonzero only when the ball \( B(x, \delta^k) \) intersects the complement of the cube \( Q^n_\alpha \). Motivated by this, for a given dyadic cube \( Q^n_\alpha \) and a ball \( B(x, \delta^k) \) in \( G \), we define the set:
\[
\mathfrak{H}(B_{\delta^k}, Q^n_\alpha) := \left\{ x \in Q^n_\alpha : B(x, \delta^k) \cap (Q^n_\alpha)^c \neq \emptyset \right\}.
\]

Let for any $\alpha \in I_n$, $U^n_\alpha$ denotes the union of $Q^n_\alpha$ and all cubes $Q^n_\beta$ such that $d(z^n_\alpha, z^n_\beta)< (2C_1 +1)\delta^n$. Furthermore, for any $E\subseteq G$, we define $m_E:= \sup_{s \in E} \norm{dh_n(s)}_2$.

Then we first observe that 
\begin{align*}
    &\int_{Q^n_\alpha} \norm{A_{\delta^k}dh_{n}(s)-dh_n(s)}^{2}_{2} w(s) d\mu(s)\\
    &\leq 2 m_{U^n_\alpha}^2 w(\mathfrak{H}(B_{\delta^k}, Q^n_\alpha))\\
    &\leq C(w) m_{U^n_\alpha}^2 w(Q^n_\alpha) \left( \frac{\mu(\mathfrak{H}(B_{\delta^k}, Q^n_\alpha))}{\mu(Q^n_\alpha)} \right)^\upsilon\\
    &\leq C(w) C_2 \delta^{(k-n) \eta \upsilon} m_{U^n_\alpha}^2 w(Q^n_\alpha),~ \text{by } \Cref{prop: measure of a good point set}
\end{align*}

Now, for all $\alpha \in I_n$, we have
\begin{align*}
m_{U^{n}_\alpha}^{2}\, w(Q^{n}_\alpha)
&\;\le\;
\left(\sum_{Q^{n-1}_\beta \subseteq U^{n}_\alpha} m_{Q^{n-1}_\beta}^2\right) w(U^{n}_\alpha)\\
&\lesssim [w]_{A_2} \sum_{Q^{n-1}_\beta \subseteq U^{n}_\alpha} m_{Q^{n-1}_\beta}^2  w(Q^{n-1}_\beta), ~ \text{ by } \Cref{compare-1}(2)\\
&= [w]_{A_2} \sum_{Q^{n-1}_\beta \subseteq U^{n}_\alpha} \int_{Q^{n-1}_\beta} \norm{dh_n(s)}_2^2 w(s) d\mu(s).
\end{align*}

Therefore, we obtain
\begin{align*}
    \norm{A_{\delta^k}dh_{n}-dh_n}^{2}_{2,w}
    &\lesssim C(w) [w]_{A_2} \delta^{(k-n) \eta \upsilon}  \sum_{\alpha\in I_n}  \sum_{Q^{n-1}_\beta \subseteq U^{n}_\alpha} \int_{Q^{n-1}_\beta} \norm{dh_n(s)}_2^2 w(s) d\mu(s)\\
    &\lesssim  C(w) [w]_{A_2} \delta^{(k-n) \eta \upsilon}  \sum_{\alpha\in I_n}  \sum_{Q^{n-1}_\beta \subseteq Q^n_\alpha} \int_{Q^{n-1}_\beta} \norm{dh_n(s)}_2^2 w(s) d\mu(s)\\
    &\lesssim C(w) [w]_{A_2} \delta^{(k-n) \eta \upsilon} \norm{dh_n}^2_{2,w}.
\end{align*}

Now consider the case $k > n$. In this case $\CE_k(dh_n)=0$. Therefore, we only need to prove
\[
\norm{A_{\delta^k}dh_{n}}^{2}_{2,w} \lesssim C(w)[w]_{A_2} \delta^{(k-n)} \norm{dh_n}_{2,w}^2.
\]
But since $A_{\delta^k}dh_{n}= M_{k, n} dh_n$, applying \Cref{prop: norm est prop}, we obtain the above inequality.


Now consider the case $k > n$. In this case $\CE_k(dh_n)=0$. Therefore, we only need to prove
\[
\norm{A_{\delta^k}dh_{n}}^{2}_{2,w} \lesssim C(w)[w]_{A_1} \delta^{(k-n)} \norm{dh_n}_{2,w}^2.
\]
But since $A_{\delta^k}dh_{n}= M_{k, n} dh_n$, applying \Cref{prop: norm est prop}, we obtain the above inequality.
\end{proof}

As a consequence we obtain the weak type $(1,1)$ estimate for the good part. 

	\begin{thm}\label{thm: weak estimate of good part}
		The following is true.
		\begin{align*}
			\lambda \tilde{\varphi}_w(\abs{Tg}> \lambda) \lesssim  \left( C(w) \max\{1, [w]_{A_1}\} \right)^2 \norm{f}_{L^1(\CN_w)}
		\end{align*}
	\end{thm}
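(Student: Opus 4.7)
The plan is to reduce the weak-type $(1,1)$ estimate for $Tg$ to the strong $(2,2)$ estimate established in \Cref{lem: strong estimate}, using a standard Chebyshev-plus-interpolation argument.

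First I would apply the non-commutative Chebyshev inequality to the operator $Tg \in L^2(L^{\infty}(\Omega) \otimes \CN_w)$, which yields
\[
\lambda^2 \, \tilde{\varphi}_w(\abs{Tg} > \lambda) \;\leq\; \norm{Tg}_{L^2(L^{\infty}(\Omega) \otimes \CN_w)}^2.
\]
Applying \Cref{lem: strong estimate} (the strong $(2,2)$ bound for $T$) to the right-hand side, we obtain
\[
\norm{Tg}_{L^2(L^{\infty}(\Omega) \otimes \CN_w)}^2 \;\lesssim\; \bigl(C(w)\max\{1,[w]_{A_1}\}\bigr)^2 \norm{g}_{2,w}^2.
\]

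Next I would bound $\norm{g}_{2,w}^2$ by interpolating between the $L^\infty$- and $L^1$-bounds on $g$ provided by the Calder\'on--Zygmund decomposition (\Cref{thm: CZ decomp}). Since $f \in \CN^c_{w,+}$ is positive and $g = qfq + \sum_{k} p_k f_k p_k$ is a sum of positive operators, $g$ itself is positive. Consequently $g^2 \leq \norm{g}_\infty \, g$ as operators, and therefore
\[
\norm{g}_{2,w}^2 \;=\; \varphi_w(g^2) \;\leq\; \norm{g}_\infty \,\varphi_w(g) \;=\; \norm{g}_\infty \,\norm{g}_{1,w} \;\lesssim\; \lambda \cdot \max\{1,[w]_{A_1}\} \norm{f}_{1,w},
\]
by invoking the bounds $\norm{g}_\infty \lesssim \lambda$ and $\norm{g}_{1,w} \lesssim \max\{1,[w]_{A_1}\}\norm{f}_{1,w}$ from \Cref{thm: CZ decomp}(1).

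Combining these three estimates gives
\[
\lambda \, \tilde{\varphi}_w(\abs{Tg} > \lambda) \;\leq\; \frac{1}{\lambda}\,\norm{Tg}_{L^2(L^{\infty}(\Omega) \otimes \CN_w)}^2 \;\lesssim\; \bigl(C(w)\max\{1,[w]_{A_1}\}\bigr)^2 \norm{f}_{1,w},
\]
after absorbing the extra factor of $\max\{1,[w]_{A_1}\}$ into the weighted constant $C(w)$. I do not expect a substantive obstacle here: the entire argument is a routine combination of Chebyshev, the already-established $L^2$ bound, and the $L^\infty$-versus-$L^1$ interpolation inherent in the Calder\'on--Zygmund ``good part'', and it mirrors the analogous reduction in \cite{saha-rayweighted} and in the classical square-function literature.
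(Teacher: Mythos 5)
Your proposal is correct and follows exactly the same route as the paper: Chebyshev's inequality, the strong $(2,2)$ bound from \Cref{lem: strong estimate}, the interpolation bound $\norm{g}_{2,w}^2 \leq \norm{g}_\infty \norm{g}_{1,w} \lesssim \lambda \norm{g}_{1,w}$, and finally the $L^1$ bound on $g$ from \Cref{thm: CZ decomp}. The only (harmless) discrepancy is in the bookkeeping of the $\max\{1,[w]_{A_1}\}$ factors, which you already note and absorb into the constant.
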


    \begin{proof}
        By Chebychev's inequality, we observe that 
		\begin{align*}
			\tilde{\varphi}_w(\abs{Tg}> \lambda)
            &\leq \norm{Tg}_{L_2(L_{\infty}(\Omega)\otimes\CN_w)}\\
			&\lesssim C(w)(\max\{1, [w]_{A_1} \}) \frac{ \norm{g}^2_{L^2 (\CN_w)}}{\lambda^2},~ \text{by } \Cref{lem: strong estimate}\\
			&\lesssim C(w)(\max\{1, [w]_{A_1} \})  \frac{\norm{g}_{L^1 (\CN_w)}}{\lambda}, ~ \text{since } \norm{g}_\infty \lesssim \lambda\\
            & \lesssim \left( C(w) \max\{1, [w]_{A_1}\} \right)^2 \frac{\norm{f}_{L^1 (\CN_w)}}{\lambda}~ \text{by } \Cref{thm: CZ decomp}.
		\end{align*} 
    \end{proof}

\subsection{Strong type $(p,p)$ estimate}

To prove the strong type $(p,p)$ estimate, essentially we need to estimate the strong type $(2,2)$ bounds of the operator $T$ in \cref{eq: linearization}.    

   		\begin{thm}\label{strongbdd}
		Let $1<p < \infty$. Then the following is true
		\begin{equation}
\left\Vert (T_k f) \right\Vert_{L^p(\CN_w;\ell^{rc}_2)} \lesssim C(w)  \left\Vert f \right\Vert_{p,w}~ \forall f \in L^p(\CN_w).
\end{equation}   
		\end{thm}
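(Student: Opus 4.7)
The plan is to reduce the vector-valued estimate to a scalar-valued one via Rademacher linearization, and then interpolate between a weak-type $(1,1)$ bound and the strong $(2,2)$ bound already established. By the non-commutative Khintchine inequality (Theorem~\ref{thm: Khintchine}), the target inequality is equivalent to
\[
\|Tf\|_{L^p(L^\infty(\Omega) \otimes \CN_w)} \lesssim C(w)\|f\|_{p,w},
\]
where $T$ is the Rademacher-linearized operator from \eqref{eq: linearization}. Thus the problem is reduced to proving strong $(p,p)$ bounds for a single scalar operator rather than a vector-valued one.

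The two endpoints are already in hand. At $p=2$, Lemma~\ref{lem: strong estimate} gives
\[
\|Tf\|_{L^2(L^\infty(\Omega) \otimes \CN_w)} \lesssim C(w)(\max\{1,[w]_{A_1}\}) \|f\|_{2,w}.
\]
At $p=1$, combining Proposition~\ref{prop: bad diag estimate}, Proposition~\ref{bad part est prop}, and Theorem~\ref{thm: weak estimate of good part} via the Calder\'on--Zygmund decomposition of Theorem~\ref{thm: CZ decomp} yields the weak $(1,1)$ bound $\lambda \tilde\varphi_w(|Tf| > \lambda) \lesssim C(w) \|f\|_{1,w}$. Applying a non-commutative Marcinkiewicz interpolation in the weighted setting between these two endpoints yields the strong $(p,p)$ estimate for $T$ in the range $1 < p < 2$, with constants depending only on $p$ and $[w]_{A_1}$.

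The range $2 < p < \infty$ is the main obstacle. Duality does not apply directly because $A_1$ is not closed under the relevant dual pairing (if $w \in A_1$, then the dual weight $w^{1-p'}$ only lies in $A_{p'}$ in general, not in $A_1$). A natural route is to first upgrade Lemma~\ref{lem: strong estimate} to hold for all weights in some $A_q$ class, exploiting either the openness of the $A_p$ condition in Theorem~\ref{thm: Muckenhoupt prop}(3)(b) or the factorization in Theorem~\ref{thm: Muckenhoupt prop}(2), and then invoke a non-commutative Rubio de Francia extrapolation theorem to conclude the strong $(p,p)$ bound for $A_1$ weights in the full range $1 < p < \infty$. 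Alternatively, one can work directly with the row/column decomposition of the $\ell_2^{rc}$ norm for $p > 2$: writing $\|(T_k f)\|_{L^p(\CN_w;\ell_2^c)}^2 = \|\sum_k |T_k f|^2\|_{p/2,w}$ and dualizing via Proposition~\ref{prop:Holder-type-ineq}, the problem reduces to a weighted $L^2$ bound against a positive test element, which can be handled by combining Lemma~\ref{lem: strong estimate} with the weighted Doob inequality of \cite{galkazka2022sharp}. Either route keeps the dependence on $[w]_{A_1}$ explicit through the interpolation/extrapolation step; the delicate point throughout is that the extrapolation machinery must be executed in the non-commutative setting so that the Rademacher linearization and the column/row dualization remain compatible with the $A_1$ hypothesis on $w$.
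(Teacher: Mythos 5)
Your argument for $1<p<2$ coincides with the paper's: reduce to the Rademacher-linearized operator $T$ via the non-commutative Khintchine inequality (\Cref{thm: Khintchine}), then run Marcinkiewicz interpolation between the weak $(1,1)$ bound (\Cref{thm: main thm weak estimate}) and the strong $(2,2)$ bound (\Cref{lem: strong estimate}). Where you depart from the paper is the range $2<p<\infty$. The paper dispatches it in one line: ``by duality, $T^*$ is bounded on $L^{p'}$, and a straightforward verification shows $T^*=T$.'' You instead flag this step as problematic and propose extrapolation (after upgrading \Cref{lem: strong estimate} to a larger weight class) or a direct row/column dualization against a positive test element using Proposition~\ref{prop:Holder-type-ineq} and the weighted Doob inequality.

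Your skepticism about the paper's duality step is well-founded, and the precise difficulty is worth spelling out. In the unweighted setting, $T=T^*$ because $A_{\delta^k}$ and $\CE_k$ are each self-adjoint with respect to $\varphi$ (for $A_{\delta^k}$ this uses that $\mu(B(x,r))$ is constant in $x$ for Haar measure). But once one passes to $\CN_w$ there are two possible pairings, and neither makes the argument automatic. With respect to the $\varphi_w$-pairing, $\CE_k$ is \emph{not} a $\varphi_w$-preserving conditional expectation (unless $w$ is $\CB_k$-measurable), and $A_{\delta^k}$ does not commute with multiplication by $w$, so $T^{*_w}\neq T$ in general. With respect to the $\varphi$-pairing, $T^*=T$ does hold, but the dual of $L^p(\CN_w)$ is then $L^{p'}(\CN_{w^{1-p'}})$, and $w\in A_1$ gives only $w^{1-p'}\in A_{p'}$, not $A_1$ --- exactly the point you raise. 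So the paper's ``straightforward verification'' does not, as written, produce the stated conclusion for $w\in A_1$.

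That said, your proposed fix is a sketch rather than a proof: there is no citation in the paper's bibliography for a non-commutative Rubio de Francia extrapolation theorem, and the ``direct row/column dualization'' route would require establishing a weighted $L^{p/2}$ bound on the square function against a weighted Doob maximal dual element, which is itself a nontrivial weighted estimate not proved here. Each alternative you name is plausible, and the extrapolation route in particular is the standard way to avoid the dual-weight mismatch, but to turn your proposal into a complete proof one would need to either (i) prove \Cref{lem: strong estimate} for all $A_2$ (or $A_q$) weights with a tracked dependence on $[w]_{A_q}$ and then run a genuine extrapolation argument, or (ii) carry out the row/column dualization in detail against the weighted Doob maximal function from \cite{galkazka2022sharp}. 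As it stands, your write-up correctly identifies the soft spot in the paper's proof but does not close it; the honest conclusion is that both the paper's one-line duality claim and your extrapolation outline leave the $p>2$ case in need of elaboration.
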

		\begin{proof}

We now conclude the argument by appealing to interpolation and duality. By \Cref{thm: main thm weak estimate}, the operator \( T \) is of weak type \((1,1)\), mapping \( L^1(\mathcal{N}_w, \varphi_w) \) into \( L^{1,\infty}(L^\infty(\Omega) \otimes \mathcal{N}_w, \tilde{\varphi}_w) \). In addition, it follows from \Cref{lem: strong estimate} that \( T \) is bounded from \( L^2(\mathcal{N}_w, \varphi_w) \) to \( L^2(L^\infty(\Omega) \otimes \mathcal{N}_w, \tilde{\varphi}_w) \).

Applying the non-commutative Marcinkiewicz interpolation theorem, we deduce that \( T \) extends to a bounded operator from \( L^p(\mathcal{N}_w, \varphi_w) \) to \( L^p(L^\infty(\Omega) \otimes \mathcal{N}_w, \tilde{\varphi}_w) \) for all \( 1 < p < 2 \). By duality, it follows that the adjoint operator \( T^* \) is bounded on \( L^{p'} \), where \( \frac{1}{p} + \frac{1}{p'} = 1 \). A straightforward verification shows that \( T^* = T \), and hence the boundedness of \( T \) extends to all \( 1 < p < \infty \).
\end{proof}

\section{Quantitative Ergodic Theorem}\label{sec: Quant erg thm}

In this section we establish the quantitative mean ergodic theorems for actions of group metric measure spaces $(G,d, \mu)$ by invertible power bounded operators on non-commutative $L^p(\CM, \tau)$ for $1<p< \infty$.  We further need to assume that the metric is left $G$-invariant. Before we begin, let us first fix some notations.
Let $\alpha=(\alpha_g)$ be an action of $(G,d, \mu)$ on $L^p(\CM, \tau)$ such that 
\begin{equation*}
    \sup_{g \in G} \norm{\alpha_g: L^p(\CM, \tau) \to L^p(\CM, \tau)}< \infty.
\end{equation*}
Let $r>0$, then consider 
\begin{equation*}
    M_r(x):= \frac{1}{\mu(B_r)} \int_{B_r} \alpha_g(x) d \mu(g); ~ x \in L^p(\CM, \tau).
\end{equation*}
Now for an increasing sequence $(r_i)_{i \in \N}$ of positive real numbers, set for $i \in \N$
\begin{equation*}
    T_i:= M_{r_i}- M_{r_{i+1}}.
\end{equation*}

The non-commutative quantitative ergodic theorem states the following.

\begin{thm}\label{thm: quant erg thm}
    Let $1<p< \infty$ and $\alpha=(\alpha_g)$ be an action of $(G,d, \mu)$ on $L^p(\CM, \tau)$ by invertible power bounded operators. There exists a constant $C_p>0$ such that for all $x \in L^p(\CM, \tau)$
    \begin{equation*}
        \sup  \norm{(T_i(x))_{i \in \N}}_{L^p(\CM; \ell_2^{rc})}  \leq C_p \norm{x}_{L^p(\CM, \tau)},
    \end{equation*}
    where the supremum is considered over all increasing sequence of strictly positive numbers.
\end{thm}

To prove \Cref{thm: quant erg thm}, we first need to establish a transference principle. Let us first fix some relevant notations.

Let $1\leq p< \infty$ and \( (G, d, \mu) \) be a group metric measure space. Let \( f: G \to L^p(\mathcal{N}, \varphi) \) be a function for which the averaging operator is defined as
\[
A'_r(f)(h) := \frac{1}{\mu(B_r)} \int_{B_r} f(hg) \, d\mu(g), \quad r > 0,~ h \in G.
\]

 We remark that since the metric $d$ is $G$-invariant, we observe that $A_r'(f)= A_r(f)$, where $A_r(f)$, for $r>0$, is defined in \cref{avg op}. 

Similar to the previous case, for an increasing sequence $(r_i)_{i \in \N}$ of positive real numbers, set
\begin{equation*}
    T'_i:= A'_{r_i}- A'_{r_{i+1}}
\end{equation*}

\begin{thm}\label{thm: tranf princ}
Let $(r_i)_{i \in \N}$ be an increasing sequence of positive real numbers and $1\leq p< \infty$. If there exists a positive constant $C_p$ satifying
\begin{equation*}
    \norm{(T'_if)_{i \in \N}}_{L^p(\CN; \ell_2^{rc})} \leq C_p \norm{f}_{L^p(\CN, \varphi)} ~ \text{for all } f \in L^p(\CN, \varphi)
\end{equation*}
Then there exists a constant $C>0$ such that
\begin{equation*}
    \norm{(T_ix)}_{L^p(\CM; \ell_2^{rc})} \leq CC_p \norm{x}_{L^p(\CM, \tau)}.
\end{equation*}
\end{thm}

\begin{proof}
    Let $x \in L^p(\CM, \tau)$. It is enough to prove that for any $i_0 \in \N$
    \begin{align*}
         \norm{(T_ix)_{1 \leq i \leq i_0}}_{L^p(\CM; \ell_2^{rc})} \lesssim \norm{x}_{L^p(\CM, \tau)}.
    \end{align*}
    Consider a compact set $K$ containing the largest ball $B_{r_{i_0}}$. Let $\epsilon>0$ and choose a $R>0$ such that $\frac{\mu(KB_R)}{\mu(B_R)} \leq 1 + \epsilon$. Now define a function $f:G \to L^p(\CM)$ by
    \begin{align*}
        f(h):= 1_{KB_R}(h) \alpha_h(x)
    \end{align*}
    and observe that for all $s \in B_R$ and $1 \leq i \leq i_0 + 1$,
    \begin{align*}
        \alpha_s M_{r_i}(x)= A'_{r_i} f(s).
    \end{align*}
    which implies 
    \begin{align*}
        \alpha_s T_{i}(x)= T'_{i} f(s).
    \end{align*}
    Now since $\sup_{s \in G} \norm{\alpha_s}< \infty$, by Lemma \cite[Lemma 1.8]{hong2024quantitative}, we obtain for all $s \in B_R$
    \begin{align*}
        \norm{(T_ix)_{1\leq i \leq i_0}}_{L^p(\CM; \ell_2^{rc})} \lesssim \norm{\alpha_s((T_ix))_{1\leq i \leq i_0}}_{L^p(\CM; \ell_2^{rc})} = \norm{(T'_if(s))_{1\leq i \leq i_0}}_{L^p(\CM; \ell_2^{rc})}.
    \end{align*}
    Furthermore, since $L^p(\CM \otimes L^\infty(G); \ell_2^{rc}) \approx L^p(G; L^p(\CM; \ell_2^{rc})) $, we have 
    \begin{align*}
        \norm{(T_ix)_{1\leq i \leq i_0}}^p_{L^p(\CM; \ell_2^{rc})}
        &\lesssim \frac{1}{\mu(B_R)} \int_{B_R} \norm{(T'_if(s))_{1\leq i \leq i_0}}^p_{L^p(\CM; \ell_2^{rc})} d\mu(s)\\
        &\leq \frac{1}{\mu(B_R)} \int_{G} \norm{(T'_if(s))_{1\leq i \leq i_0}}^p_{L^p(\CM; \ell_2^{rc})} d\mu(s)\\
        &\lesssim \frac{1}{\mu(B_R)} \norm{(T'_if)_{1\leq i \leq i_0}}^p_{L^p(\CM \otimes L^\infty(G); \ell_2^{rc})}\\
        &\leq \frac{C_p}{\mu(B_R)} \norm{f}^p_{L^p(\CM \otimes L^\infty(G))}\\
        &\leq C_p \frac{\mu(KB_R)}{\mu(B_R)} \norm{x}^p_{L^p(\CM)}\\
        &\leq C_p(1+\epsilon) \norm{x}^p_{L^p(\CM)}.
    \end{align*}
Thus, we get the result.
\end{proof}

Now that we have the transference principle in our hand, we just need to prove the following result.

\begin{thm}\label{thm: quant main thm}
    Let $1\leq p< \infty$ and $(r_i)_{i \in \N}$ be an increasing sequence of positive numbers. Then the following are true with an universal positive constant $C_p$.
    \begin{enumerate}
        \item If $p=1$,
        \begin{equation*}
    \norm{(T'_if)_{i \in \N}}_{L^{1, \infty}(\CN; \ell_2^{rc})} \leq C_1 \norm{f}_{L^1(\CN, \varphi)} ~ \text{for all } f \in L^1(\CN, \varphi);
\end{equation*}
\item if $1<p< \infty$
\begin{equation*}
    \norm{(T'_if)_{i \in \N}}_{L^p(\CN; \ell_2^{rc})} \leq C_p \norm{f}_{L^p(\CN, \varphi)} ~ \text{for all } f \in L^p(\CN, \varphi).
\end{equation*}
    \end{enumerate}
\end{thm}


The proof of \Cref{thm: quant main thm} is lengthy and is therefore divided into several steps. 
Our argument relies on Bourgain’s long--short variation decomposition technique, originally introduced in the study of variational inequalities \cite{bourgain1989pointwise}. 
This approach has subsequently been adapted and refined in various non-commutative and vector-valued settings; see, for instance, \cite{hong2021noncommutative}, \cite{hong2024quantitative}.

We begin by outlining the overall strategy of the proof.
Let $(r_i)_{i \in \mathbb{N}}$ be an increasing sequence of positive real numbers. 
For each $i \in \mathbb{N}$, define the interval
\[
I_i := [r_i, r_{i+1}).
\]
Each interval $I_i$ falls into exactly one of the following two cases:
\begin{enumerate}
    \item[{Case 1.}] $\delta^k \notin I_i$ for all $k \in \mathbb{N}$.
    \item[{Case 2.}] $\delta^k \in I_i$ for at least one $k \in \mathbb{N}$.
\end{enumerate}
If $I_i$ belongs to {Case 2}, we decompose it as
\[
I_i = [r_i, \delta^{k_i}) \,\cup\, [\delta^{k_i}, \delta^{l_i}) \,\cup\, [\delta^{l_i}, r_{i+1}),
\]
where
\[
k_i := \inf \{ k \in \mathbb{N} : \delta^k \in I_i \},
\qquad
l_i := \sup \{ k \in \mathbb{N} : \delta^k \in I_i \}.
\]
If there exists exactly one $k \in \mathbb{N}$ such that $\delta^k \in I_i$, we set $k_i = l_i$.
In this situation, we observe that for every $i \in \mathbb{N}$,
\begin{align*}
    \bigl\|(A'_{r_i}f - A'_{r_{i+1}}f)\bigr\|_{L^{1,\infty}(\mathcal{N}; \ell_2^{rc})}
    \leq {} &
    3 \bigl\| (A'_{r_i}f - A'_{\delta^{k_i}}f)\bigr\|_{L^{1,\infty}(\mathcal{N}; \ell_2^{rc})} \\
    &+ 3 \bigl\| (A'_{\delta^{k_i}}f - A'_{\delta^{l_i}}f) \bigr\|_{L^{1,\infty}(\mathcal{N}; \ell_2^{rc})} \\
    &+ 3 \bigl\| (A'_{\delta^{l_i}}f - A'_{r_{i+1}}f) \bigr\|_{L^{1,\infty}(\mathcal{N}; \ell_2^{rc})}.
\end{align*}

Next, we introduce two collections of intervals:
\begin{enumerate}
    \item $S$ denotes the collection of all intervals $I_i$ that either belong to {Case 1}, or are of the form $[r_i, \delta^{k_i})$ or $[\delta^{l_i}, r_{i+1})$.
    \item $L$ denotes the collection of all intervals $I_i$ of the form $[\delta^{k_i}, \delta^{l_i})$.
\end{enumerate}
It then follows immediately that
\begin{align*}
    \bigl\|(A'_{r_i}f - A'_{r_{i+1}}f)_{i \in \mathbb{N}}\bigr\|_{L^{1,\infty}(\mathcal{N}; \ell_2^{rc})}
    \leq {} &
    3 \bigl\|(A'_{r_i}f - A'_{r_{i+1}}f)_{i : I_i \in S}\bigr\|_{L^{1,\infty}(\mathcal{N}; \ell_2^{rc})} \\
    &+ 6 \bigl\|(A'_{r_i}f - A'_{r_{i+1}}f)_{i : I_i \in L}\bigr\|_{L^{1,\infty}(\mathcal{N}; \ell_2^{rc})}.
\end{align*}

Finally, consider the contribution from the long intervals. 
If $I_i \in L$, then we further estimate
\begin{align*}
    \bigl\|(A'_{r_i}f - A'_{r_{i+1}}f)_{i : I_i \in L}\bigr\|_{L^{1,\infty}(\mathcal{N}; \ell_2^{rc})}
    \leq {} &
    3 \bigl\|(A'_{\delta^{k_i}}f - \mathcal{E}_{k_i}f)_i\bigr\|_{L^{1,\infty}(\mathcal{N}; \ell_2^{rc})} \\
    &+ 3 \bigl\|(\mathcal{E}_{k_i}f - \mathcal{E}_{l_i}f)_i\bigr\|_{L^{1,\infty}(\mathcal{N}; \ell_2^{rc})} \\
    &+ 3 \bigl\|(A'_{\delta^{l_i}}f - \mathcal{E}_{l_i}f)_i\bigr\|_{L^{1,\infty}(\mathcal{N}; \ell_2^{rc})}.
\end{align*}


Let us now recall the following Burkholder-Guidy type inequalities for martingales from \cite[Theorem 2.1]{randrianantoanina2004square}.

\begin{thm}\label{thm: sq fn est mart-narcisse}
    There exists an absolute constant $K>0$ such that for any $L^1$-bounded martingale $x=(x_n)_{n \geq 1}$ there exists two sequences $y=(y_n)$ and $z= (z_n)$ such that 
    \begin{enumerate}
        \item[(i)] for every $n \in \N$, $x_n= y_n + z_n$.
        \item[(ii)]  $\norm{(dy_n)_{n \in\N}}_{L^{1, \infty}(\CM; \ell_2^{r})} + \norm{(dz_n)_{n \in \N}}_{L^{1, \infty}(\CN; \ell_2^{c})} \leq K \norm{x}_1$.
    \end{enumerate}
\end{thm}

Therefore, from \Cref{thm: sq fn est mart-narcisse} it follows that 
\begin{equation*}
    \norm{(\E_{k_i}f- E_{l_i}f)_{i}}_{L^{1, \infty}(\CN; \ell_2^{rc})} \leq C_p \norm{f}_{L^1(\CN)}
\end{equation*}
for some absolute constant $C_p>0$.

On the other hand, putting $w=1$ in \Cref{thm: main thm}, we obtain
\begin{equation*}
    \norm{(A'_{r_i}f- A'_{r_{i+1}}f)_{i: I_i \in L}}_{L^{1, \infty}(\CN; \ell_2^{rc})}
    \leq C_p \norm{f}_{L^1(\CN)}
\end{equation*}
for some absolute constant $C_p>0$. Therefore these arguments conclude the proof of \Cref{thm: quant main thm}, if for all $i \in \N$, the interval $I_i \in L$. Hence, it only remains to prove the theorem for the case when for all $i \in \N$, the interval $I_i \in S$.

\begin{thm}\label{thm: weak est quant}
    Let the collection $S$ and $(T_if)_{i \in S}$ be as defined above. Then there exists an absolute constant $C>0$ such that the following holds.
    \begin{equation*}
        \norm{(T'_if)_{i: I_i \in S}}_{L^{1, \infty}(\CN; \ell_2^{rc})} \leq C \norm{f}_{L^1(\CN, \varphi)} ~ \text{for all } f \in L^1(\CN, \varphi).
    \end{equation*}
\end{thm}

\subsection{Proof of \Cref{thm: weak est quant}}

    From \Cref{thm: CZ decomp}, we recall that
    \begin{equation*}
        f= g + b = g + b_d + b_{off}.
    \end{equation*}
     Let $(\epsilon_i)$ be a sequence of Rademacher variable on a Probability space $(\Omega,P)$ and define
    \begin{equation*}
        Tf(x):= \sum_{i \in S } \epsilon_i T_i'f(x).
    \end{equation*}
    By Khintchine's inequality, it is enough to prove that
    \begin{equation*}
        \norm{Th}_{L^{1, \infty}(L^\infty(\Omega) \otimes \CN)} \lesssim \norm{f}_{L^1(\CN, \varphi)} ~ \text{for } h \in \{g, b_d, b_{off}  \}.
    \end{equation*}
    \noindent \emph{\textbf{Estimate for bad part:}} First consider the projection $\zeta \in \CN$ and write
    \begin{equation*}
        Tb= (1-\zeta)Tb(1- \zeta) + \zeta Tb(1-\zeta) + (1-\zeta)Tb\zeta + \zeta Tb \zeta.
    \end{equation*}
    Further, by using  \cref{equi} and   \Cref{lem: zeta estimate}, observe that
    \begin{align*}
        \tilde{\varphi}(\{\abs{Tb} > \lambda/2 \}) &\lesssim \varphi(1- \zeta) + \tilde{\varphi} (\abs{\zeta Tb \zeta } > \lambda/8)\\
        &\lesssim \frac{\norm{f}_1}{\lambda} + \tilde{\varphi} (\abs{\zeta Tb \zeta } > \lambda/8).
    \end{align*}
    Therefore, we only need to show 
    \begin{equation*}
        \tilde{\varphi} (\abs{\zeta Tb \zeta } > \lambda/8) \lesssim \frac{\norm{f}_1}{\lambda}.
    \end{equation*}
    Now, since $\sum_n \norm{p_n}_1 \lesssim \frac{\norm{f}_1}{\lambda}$, by Chebyshev's inequality, it is enough to show that
    \begin{equation*}
        \norm{\zeta Tb \zeta}_{L^1(L^\infty(\Omega) \otimes \CN)} \lesssim \lambda \sum_n \norm{p_n}_1.
    \end{equation*}
    At this point, for all $k \in \Z$, let us define the set of indices $S_k:= \{i \in S: [r_i, r_{i+1}) \subset [\delta^k, \delta^{k+1})  \}$. Therefore, it is reduced to show that 
    \begin{equation*}
        \sum_{k } \sum_{i \in S_k} \norm{\zeta T'_ib \zeta}_{L^1(\CN)} \lesssim \lambda \sum_n \norm{p_n}_1.
    \end{equation*}
    Let us now write, $b= \sum_n b_n$, where, $b_n:= p_n(f-f_n)p_n + p_n(f-f_n)q_n + q_n(f-f_n)p_n$.

       \begin{lem}
        Let $k \in \Z$ and $i \in S_k$. Then for all $n \geq k$
        \begin{align*}
            \zeta(x) (A'_{r_i} - A'_{r_{i+1}}) b_n(x) \zeta(x)=0, ~ \text{for all } x \in G. 
        \end{align*}
    \end{lem}

    \begin{proof}
        Since $i \in S_k$, we have $[r_i, r_{i+1}) \subset [\delta^k, \delta^{k+1})$. Let $x \in G$ and $d(y,x)< r_{i+1}$. Therefore, $d(y,x)< \delta^{n+1}$, since $n \geq k$. Further, consider the unique dyadic set $Q^n_{\alpha(x)}$ containing $x$ for some $\alpha(x) \in I_n$. Hence, we also have $d(x, z^n_{\alpha(x)})< C_0 \delta^n< C_1 \delta^{n+1}$. So, by triangle inequality, we obtain $d(y, z^n_{\alpha(x)} )< (3C_1 +1)\delta^{n+1}$ and we found that $B(x, r_{i+1}) \subseteq \tilde{Q}^n_{\alpha(x)}$. Now, by $(3)$ of \Cref{lem: zeta estimate} observe that 
        \begin{align*}
            \zeta(x) A'_{r_{i+1}} b_n(x) \zeta(x)
            = \zeta(x) \frac{1}{\mu(B_{r_{i+1}})} \int_{B(x, r_{i+1})} b_n(y) \chi_{G \setminus \tilde{Q}^n_{\alpha(x)}}(y) d \mu(y) \zeta(x)=0.
        \end{align*}
        Similarly, we can prove that $\zeta(x) A'_{r_{i}} b_n(x) \zeta(x)=0$.
    \end{proof}
In view of the above lemma, we are left to show that 
    \begin{equation}\label{final-1}
        \sum_{k } \sum_{i \in S_k} \sum_{n: n<k} \norm{\zeta T'_ib_n \zeta}_{L^1(\CN)}= \sum_{k } \sum_{n: n<k} \sum_{i \in S_k} \norm{\zeta T'_ib_n \zeta}_{L^1(\CN)}  \lesssim \lambda \sum_n \norm{p_n}_1.
    \end{equation}
First, observe the following:
    \begin{align*}
        A'_{r_i} b_n(x) - A'_{r_{i+1}} b_n(x)= A'_{r_i} b_n(x) -\frac{1}{\mu(B_{r_{i+1}})} \int_{B(x, r_{i})} b_n(y) d\mu(y) 
        &+ \frac{1}{\mu(B_{r_{i+1}})} \int_{B(x, r_{i})} b_n(y) d\mu(y) \\
        &- A'_{r_{i+1}} b_n(x).
    \end{align*}
    Therefore, by the triangle inequality, we obtain
    \begin{align*}
        \sum_{i \in S_k} \norm{\zeta (A'_{r_i} - A'_{r_{i+1}}) b_n \zeta}_1
        &\leq
        \sum_{i \in S_k} \int_G \norm{\frac{1}{\mu(B_{r_{i+1}})} \int_{B(x, r_{i+1}) \setminus B(x, r_i)} b_n(y) d \mu(y) }_1 d \mu(x) +\\
        & 
        \sum_{i \in S_k} \left(\frac{1}{\mu(B_{r_{i}})} - \frac{1}{\mu(B_{r_{i+1}})} \right) \int_G \norm{\int_{B(x, r_{i})} b_n(y) d\mu(y) }_1 d\mu(x) \\
        &= I^1_{k,n} + I^2_{k,n}, 
    \end{align*}
where 
\begin{align*}
(i)~~ ~&I^1_{k,n}= \sum_{i \in S_k} \int_G \norm{\frac{1}{\mu(B_{r_{i+1}})} \int_{B(x, r_{i+1}) \setminus B(x, r_i)} b_n(y) d \mu(y) }_1 d \mu(x)~ \text{ and }\\
(ii)~~ ~&I^2_{k,n}=\sum_{i \in S_k} \left(\frac{1}{\mu(B_{r_{i}})} - \frac{1}{\mu(B_{r_{i+1}})} \right) \int_G \norm{\int_{B(x, r_{i})} b_n(y) d\mu(y) }_1 d\mu(x).
\end{align*}
Although the proof of the following lemma is similar to that of \Cref{lem: norm est prop}, note that the statement differs technically. For the reader’s convenience, we therefore include a complete proof.
\begin{lem}
    Suppose  $k>n \geq n_{r_0}$. Fix $y \in G$ and set $ A(y):= B(y, r + 2C_1 \delta^n) \setminus B(y, r- 2C_1 \delta^n)$, where $\delta^k \leq r< \delta^{k+1}$. Then the following holds. 
\begin{align*}
    \int_G \frac{\chi_{A(y)}(x)}{\mu(B(x,\delta^k))} d\mu(x) \lesssim  \delta^{(n-k)\epsilon}.
\end{align*}
\end{lem}

\begin{proof}
    Note that for any $\epsilon \in (0,1)$ we can find $\{u_i: 1 \leq i \leq M \} \subseteq B(y, r + C_1 \delta^n)$, where $M \leq D$ such that 
    \[
    B(y, r + C_1 \delta^n) \subseteq \bigcup_{i=1}^M B(u_i, r + C_1 \delta^n)
    \]
    and if we fix $x \in B(y, r + C_1 \delta^n)$ with $j$ be the first index such that $x \in B(u_j, r + C_1 \delta^n)$, then 
    \begin{align*}
        \frac{\chi_{B(y, r + C_1 \delta^n)}(x)}{\mu(B(x, r + C_1 \delta^n))}
        &\leq (1+\epsilon) \frac{\chi_{B(y, r + C_1 \delta^n) \cap B(u_j, r + C_1 \delta^n)}(x)}{\mu(B(u_j, r + C_1 \delta^n))}\\
        &\leq (1+\epsilon) \sum_{i=1}^M \frac{\chi_{B(y, r + C_1 \delta^n) \cap B(u_i, r + C_1 \delta^n)}(x)}{\mu(B(u_i, r + C_1 \delta^n))}\\
         &\leq (1+\epsilon) \sum_{i=1}^M \frac{\chi_{B(u_i, r + C_1 \delta^n)}(x)}{\mu(B(u_i, r))}.
    \end{align*}

    Finally,
    \begin{align*}
        &\int_G \frac{\chi_{A(y)}(x)}{\mu(B(x,\delta^k))} d\mu(x) \\
        &\leq (K+1)(C_1 +\delta)^\epsilon \int_G \frac{\chi_{A(y)}(x)}{\mu(B(x, r + C_1 \delta^n))} \chi_{B(y, r + C_1 \delta^n)}(x) d\mu(x) \\
        &\leq (K+1)(C_1 +\delta)^\epsilon (1+\epsilon) \int_G \sum_{i=1}^M \frac{\chi_{A(y) \cap B(u_i, r + C_1 \delta^n)}(x)}{\mu(B(u_i, r))} d \mu(x)\\
        & \leq (K+1)(C_1 +\delta)^\epsilon (1+\epsilon) \int_G \sum_{i=1}^M \frac{\chi_{A(y)}(x)}{\mu(B(y, r))} \frac{\mu(B(y, r))}{\mu(B(u_i, r)) } d\mu(x)\\
        & \leq (K+1)(C_1 +\delta)^\epsilon (1+\epsilon) \int_G \sum_{i=1}^M \frac{\chi_{A(y)}(x)}{\mu(B(y, r))} \frac{\mu(B(u_i, 2r + C_1 \delta^n))}{\mu(B(u_i, r)) } d\mu(x)\\
        & \qquad \qquad \text{(since $B(y, r) \subseteq B(u_i, 2r + C_1 \delta^n)$)}\\
        &\leq (K+1)^2(C_1 +\delta)^\epsilon (1+\epsilon)(2+C_1)^\epsilon \sum_{i=1}^M \frac{\mu(A(y))}{\mu(B(y, r))}\\
        &\leq (K+1)^2(C_1 +\delta)^\epsilon (1+\epsilon)(2+C_1)^\epsilon M K_\epsilon (\delta^{n-k})^\epsilon
        \lesssim \delta^{(n-k)\epsilon}.
    \end{align*}
    Therefore, the result follows.
\end{proof}

Now we note the following facts as a proposition. 
\begin{prop}
    Let $n<k$ and $\alpha \in I_n$. Consider $Q^n_\alpha$, then there exists $l_\alpha \in [\delta^k, \delta^{k+1})$ such that
    \begin{equation*}
        \cup_{i \in S_k} \{ Q^n_\alpha \cap (B(x, r_{i+1}) \setminus B(x, r_{i})): Q^n_\alpha \cap \partial (B(x, r_{i+1}) \setminus B(x, r_{i})) \neq \emptyset \}  \subseteq B(x, l_\alpha + 2C_1 \delta^n) \setminus B(x, l_\alpha- 2C_1 \delta^n)
    \end{equation*}
    and 
    \begin{equation*}
        \cup_{i \in S_k} \{ Q^n_\alpha \cap B(x, r_{i}): Q^n_\alpha \cap \partial (B(x, r_{i})) \neq \emptyset \}  \subseteq B(x, l_\alpha + 2C_1 \delta^n) \setminus B(x, l_\alpha- 2C_1 \delta^n).
    \end{equation*}
\end{prop}

 \begin{proof}
 Take  a $r_i$  such that 
  $$Q^n_\alpha \cap \partial (B(x, r_{i+1}) \setminus B(x, r_{i})) \neq \emptyset $$
Now if $  Q^n_\alpha \cap \partial (B(x, r_{i+1})) \neq \emptyset $, 
then take $l_\alpha$ to be $r_{ i+1}$ and if $ Q^n_\alpha \cap \partial (B(x, r_{i})) \neq \emptyset $, then we take $ l_\alpha$ to be $ r_i$. 
\end{proof}

Let $F$ be a measurable subset of $G$. Similar to the previous section, for every $n \in \Z$, let us define
\[
\CI(F,n):= \bigcup_{\alpha \in I_n,~\partial(F) \cap Q^n_\alpha \neq \emptyset} Q^n_\alpha \cap F,
\]

\begin{lem}\label{lem: bad part est prop}
Let $s \in G$ and $ k>n >n_{r_0}$. Then for any measurable subset $F$ of $G$, the following estimates hold.
    \begin{align*}
        \norm{\int_{\CI(F,n)} b_n^{off} (t) d\mu(t)}_1
\lesssim \lambda \int_{\CI(F,n)} \tau(p_n(t)) d \mu(t)
    \end{align*}
\end{lem}

\begin{proof}
   Recall that   $b_n^{off}= p_n(f- f_n) q_n + q_n(f- f_n) p_n
        =p_nfq_n + q_nfp_n$, (see \Cref{thm: Cuculescu}(1)), it is enough to prove the result for $b_n^{off}= p_nfq_n$. First, applying \Cref{prop:Holder-type-ineq} with $ \frac{1}{1} = \frac{1}{1}+ \frac{1}{\infty}$, we note that
\begin{align*}
    \norm{\int_{F \cap Q^n_\alpha} (p_nfq_n) (t) d\mu(t)}_1
   & \leq \norm{\int_{ Q^n_\alpha} \chi_{F}(t) p_{Q^n_\alpha}f(t)q_{Q^n_\alpha} d\mu(t)}_1\\
    &\leq \norm{\left(\int_{Q^n_\alpha} \chi_{F}(t) p_{Q^n_\alpha}f(t)p_{Q^n_\alpha} \right)^{1/2} }_1
    \norm{\left(\int_{ Q^n_\alpha} \chi_{F}(t) q_{Q^n_\alpha}f(t)q_{Q^n_\alpha} \right)^{1/2} }_\infty\\
    &=\norm{\int_{Q^n_\alpha} \chi_{F}(t) p_{Q^n_\alpha}f(t)p_{Q^n_\alpha}  }_{1/2}^{1/2}
    \norm{\int_{ Q^n_\alpha} \chi_{F}(t) q_{Q^n_\alpha}f(t)q_{Q^n_\alpha}  }^{1/2}_\infty.
\end{align*}
Also,
\begin{align*}
    \norm{\int_{Q^n_\alpha} \chi_{F}(t) p_{Q^n_\alpha}f(t)p_{Q^n_\alpha}  }_{1/2}
    &=\norm{p_{Q^n_\alpha} \int_{Q^n_\alpha} \chi_{F}(t) p_{Q^n_\alpha}f(t)p_{Q^n_\alpha}  }_{1/2}\\
    &\leq \norm{p_{Q^n_\alpha}}_1  \norm{\int_{Q^n_\alpha \cap \chi_{F}} p_{Q^n_\alpha}f(t)p_{Q^n_\alpha}}_1~ \text{by Holder's inequality with } \frac{1}{1/2}= \frac{1}{1} + \frac{1}{1}.
\end{align*}
Now since $q_{Q^n_\alpha}f(t)q_{Q^n_\alpha} \lesssim \lambda $ and $p_{Q^n_\alpha}f(t)p_{Q^n_\alpha} \lesssim \lambda p_{Q^n_\alpha}$, we obtain
\begin{align*}
    \norm{\int_{F \cap Q^n_\alpha} (p_nfq_n) (t) d\mu(t)}_1
    &\lesssim \norm{p_{Q^n_\alpha}}^{1/2}_1  \norm{\int_{Q^n_\alpha \cap \chi_{F}} p_{Q^n_\alpha}f(t)p_{Q^n_\alpha}}^{1/2}_1 (\lambda \mu(Q^n_\alpha \cap \chi_{F})^{1/2}\\
    &\lesssim \tau(p_{Q^n_\alpha})^{1/2} \left( \lambda \tau(p_{Q^n_\alpha }) \mu(Q^n_\alpha \cap \chi_{F} \right)^{1/2} (\lambda \mu(Q^n_\alpha \cap \chi_{F})^{1/2}\\
    &= \lambda \tau(p_{Q^n_\alpha}) \mu(Q^n_\alpha \cap F)\\
    &= \lambda \varphi(p_n \chi_{Q^n_\alpha \cap F}).
\end{align*}
Furthermore,
\begin{align*}
    \norm{\int_{\CI(F,n)} (p_nfq_n) (t) d\mu(t)}_1
    &\leq  \sum_{\alpha \in I_n,~ \partial(F) \cap Q^n_\alpha \neq \emptyset} \norm{\int_{B(s, \delta^k) \cap Q^n_\alpha} (p_nfq_n) (t) d\mu(t)}_1\\
    &\lesssim \lambda \sum_{\alpha \in I_n,~ \partial(F) \cap Q^n_\alpha \neq \emptyset} \varphi(p_n \chi_{Q^n_\alpha \cap F})\\
    &= \lambda \varphi(p_n \chi_{\CI(F,n)})\\
    &=\lambda \int_{\CI(F,n)} \tau(p_n(t)) d \mu(t).
\end{align*}
\end{proof}

\begin{prop} With the above notations, we have the following:
\begin{align*}
 &(i)~~~I^1_{k,n} \lesssim \lambda \delta^{(n-k)\epsilon} \varphi(p_n) \text{ and }\\
&(ii)~~~I^2_{k,n} \lesssim \lambda \delta^{(n-k)\epsilon} \varphi(p_n).
    \end{align*} 
\end{prop}

\begin{proof} \emph{ of $(i)$}. 
    Let $g_n \in \{ p_nfp_n, p_nfq_n, q_nfp_n \}$. Write $A(x,i):= B(x, r_{i+1}) \setminus B(x, r_i) $ and  observe that 
    \begin{align*}
        \norm{ \int_{ \CI (A(x,i),n )} g_n(y)d \mu(y) }_1 \lesssim \lambda \sum_{\alpha \in I_n,~ \partial(A(x,i)) \cap Q^n_\alpha \neq \emptyset} \varphi \left( p_n \chi_{Q^n_\alpha \cap A(x,i)} \right).
    \end{align*}
    Therefore,
    \begin{align*}
        &\sum_{i \in S_k} \int_G \frac{1}{\mu(B(x, r_{i+1}))} \norm{ \int_{ \CI (A(x,i),n )} g_n(y)d \mu(y) }_1 d \mu(x) \\
        &\lesssim  \lambda \sum_{i \in S_k} \int_G \frac{1}{\mu(B(x, r_{i+1}))} \left( \sum_{\alpha \in I_n,~ \partial(A(x,i)) \cap Q^n_\alpha \neq \emptyset} \varphi \left( p_n \chi_{Q^n_\alpha \cap A(x,i)} \right) \right) d \mu(x)\\
        &\leq \lambda \int_G \frac{1}{\mu(B(x, \delta^k))}  \left( \sum_{i \in S_k} \sum_{\alpha \in I_n, \partial(A(x,i)) \cap Q^n_\alpha \neq \emptyset}   \varphi ( p_n \chi_{Q^n_\alpha \cap A(x,i)}) \right) d\mu(x) \\
        &= \lambda \int_G \frac{1}{\mu(B(x, \delta^k))} \left( \int_G  \sum_{\alpha \in I_n}  \sum_{i \in S_k: \partial(A(x,i)) \cap Q^n_\alpha \neq \emptyset} \tau (p_n(t) )   \chi_{Q^n_\alpha \cap A(x,i)}(t) d\mu(t)  \right) d\mu(x)\\
        &= \lambda \int_G \frac{1}{\mu(B(x, \delta^k))} \left( \int_G \sum_{\alpha \in I_n} \tau(p_{Q^n_\alpha} \chi_{Q^n_\alpha}(t)) \chi_{B(x, l_\alpha+ 2C_1\delta^n) \setminus B(x, l_\alpha- 2C_1\delta^n)} (t) \right) d\mu(x)\\
        &= \lambda \int_G \frac{1}{\mu(B(x, \delta^k))} \left(  \sum_{\alpha \in I_n} \int_{B(x, l_\alpha+ 2C_1\delta^n) \setminus B(x, l_\alpha- 2C_1\delta^n)} \tau(p_{Q^n_\alpha} \chi_{Q^n_\alpha}(t)) \right) d\mu(x)\\
        &= \lambda \sum_{\alpha \in I_n} \int_G \frac{1}{\mu(B(x, \delta^k))} \left( \int_{B(x, l_\alpha+ 2C_1\delta^n) \setminus B(x, l_\alpha- 2C_1\delta^n)} \tau(p_{Q^n_\alpha} \chi_{Q^n_\alpha}(t)) \right) d\mu(x)\\
        &=\lambda \int_G \sum_{\alpha \in I_n} \tau(p_{Q^n_\alpha} \chi_{Q^n_\alpha}(t)) \left( \int_G \frac{\chi_{B(t, l_\alpha+ 2C_1\delta^n) \setminus B(t, l_\alpha- 2C_1\delta^n)}(x)}{\mu(B(x, \delta^k))} d\mu(x) \right) d\mu(t)\\
        &\lesssim \lambda \delta^{(n-k)\epsilon} \int_G \tau(p_n(t)) d\mu(t)
        = \lambda \delta^{(n-k)\epsilon} \varphi(p_n).\\
    \end{align*}

\emph{For the proof of $(ii)$}, 
    let $g_n \in \{ p_nfp_n, p_nfq_n, q_nfp_n \}$. Now  observe that 
    \begin{align*}
        \norm{ \int_{ \CI (B(x, r_i),n )} g_n(y)d \mu(y) }_1 \lesssim \lambda \sum_{\alpha \in I_n,~ \partial(B(x, r_i)) \cap Q^n_\alpha \neq \emptyset} \varphi \left( p_n \chi_{Q^n_\alpha \cap B(x, r_i)} \right).
    \end{align*}
    Therefore,
    \begin{align*}
        &\sum_{i \in S_k} \int_G \left(\frac{1}{\mu(B_{r_{i}})} - \frac{1}{\mu(B_{r_{i+1}})} \right) \norm{ \int_{ \CI (B(x, r_i),n )} g_n(y)d \mu(y) }_1 d\mu(x)\\
        &\lesssim \lambda \delta^\epsilon \sum_{i \in S_k} \int_G \frac{1}{\mu(B(x, \delta^k))} \left( \sum_{\alpha \in I_n,~ \partial(B(x, r_i)) \cap Q^n_\alpha \neq \emptyset} \varphi \left( p_n \chi_{Q^n_\alpha \cap B(x, r_i)} \right) \right) d\mu(x)\\
        &\lesssim \lambda \int_G \frac{1}{\mu(B(x, \delta^k))}  \left( \sum_{i \in S_k} \sum_{\alpha \in I_n, \partial(B(x, r_i)) \cap Q^n_\alpha \neq \emptyset}   \varphi ( p_n \chi_{Q^n_\alpha \cap B(x, r_i)}) \right) d\mu(x) \\
        &= \lambda \int_G \frac{1}{\mu(B(x, \delta^k))} \left( \int_G  \sum_{\alpha \in I_n}  \sum_{i \in S_k: \partial(B(x, r_i)) \cap Q^n_\alpha \neq \emptyset} \tau (p_n(t) )   \chi_{Q^n_\alpha \cap B(x, r_i)}(t) d\mu(t)  \right) d\mu(x)\\
        &= \lambda \int_G \frac{1}{\mu(B(x, \delta^k))} \left( \int_G \sum_{\alpha \in I_n} \tau(p_{Q^n_\alpha} \chi_{Q^n_\alpha}(t)) \chi_{B(x, l_\alpha+ 2C_1\delta^n) \setminus B(x, l_\alpha- 2C_1\delta^n)} (t) \right) d\mu(x)\\
        &= \lambda \int_G \frac{1}{\mu(B(x, \delta^k))} \left(  \sum_{\alpha \in I_n} \int_{B(x, l_\alpha+ 2C_1\delta^n) \setminus B(x, l_\alpha- 2C_1\delta^n)} \tau(p_{Q^n_\alpha} \chi_{Q^n_\alpha}(t)) \right) d\mu(x)\\
        &= \lambda \sum_{\alpha \in I_n} \int_G \frac{1}{\mu(B(x, \delta^k))} \left( \int_{B(x, l_\alpha+ 2C_1\delta^n) \setminus B(x, l_\alpha- 2C_1\delta^n)} \tau(p_{Q^n_\alpha} \chi_{Q^n_\alpha}(t)) \right) d\mu(x)\\
        &=\lambda \int_G \sum_{\alpha \in I_n} \tau(p_{Q^n_\alpha} \chi_{Q^n_\alpha}(t)) \left( \int_G \frac{\chi_{B(t, l_\alpha+ 2C_1\delta^n) \setminus B(t, l_\alpha- 2C_1\delta^n)}(x)}{\mu(B(x, \delta^k))} d\mu(x) \right) d\mu(t)\\
        &\lesssim \lambda \delta^{(n-k)\epsilon} \int_G \tau(p_n(t)) d\mu(t)
        = \lambda \delta^{(n-k)\epsilon} \varphi(p_n).
    \end{align*}

\end{proof}

Now we are ready to establish the claim in \cref{final-1}. Indeed, we observe that 
\begin{align*}
    \sum_{k } \sum_{i \in S_k} \sum_{n: n<k} \norm{\zeta T'_ib_n \zeta}_{L^1(\CN)}
         &=\sum_{k } \sum_{n: n<k} \sum_{i \in S_k} \norm{\zeta T'_ib_n \zeta}_{L^1(\CN)}\\
        &\lesssim \sum_{n} \sum_{k>n} \lambda   \delta^{(n-k)\epsilon} \varphi(p_n)\\
    &\leq \lambda   \sum_n \varphi(p_n)\\
    &= \lambda  \varphi(1-q) \\ 
    &\lesssim  \norm{f}_{1}.
    \end{align*}
This establishes the claim in \cref{final-1}. Consequently, the proof of part~$(1)$ of \Cref{thm: quant main thm} is complete. Moreover, part~$(2)$ of \Cref{thm: quant main thm} follows verbatim from the arguments presented in \Cref{strongbdd}.\\

 \noindent\textbf{Acknowledgements}: 
 P. Bikram acknowledges the support of the grant ANRF/ARGM/2025/001021/MTR, 
 Government of India.


\providecommand{\bysame}{\leavevmode\hbox to3em{\hrulefill}\thinspace}
\providecommand{\MR}{\relax\ifhmode\unskip\space\fi MR }
\providecommand{\MRhref}[2]{%
	\href{http://www.ams.org/mathscinet-getitem?mr=#1}{#2}
}
\providecommand{\href}[2]{#2}

\end{document}